\theoremstyle{plain}
\newtheorem{prop}{Proposition}[section]
\newtheorem{theo}[prop]{Theorem}
\newtheorem{coro}[prop]{Corollary}
\newtheorem{lemm}[prop]{Lemma}
\theoremstyle{definition}
\newtheorem{defi}[prop]{Definition}
\newtheorem{prob}[prop]{Problem}
\newtheorem{rema}[prop]{Remark}
\newtheorem{exam}[prop]{Example}
\newcommand{\Hom}{\mathrm{Hom }}
\newcommand{\Sym}{\mathrm{Sym }}
\newcommand{\NE}{\mathrm{NE}}
\newcommand{\oNE}{\overline{\mathrm{NE}}}
\newcommand{\rN}{\mathrm{N}}
\newcommand{\rH}{\mathrm{H}}
\newcommand{\Aut}{\mathrm{Aut}}
\newcommand{\ra}{\rightarrow}
\newcommand{\bP}{{\mathbb P}}
\newcommand{\bA}{{\mathbb A}}
\newcommand{\bC}{{\mathbb C}}
\newcommand{\bF}{{\mathbb F}}
\newcommand{\bQ}{{\mathbb Q}}
\newcommand{\bR}{{\mathbb R}}
\newcommand{\bZ}{{\mathbb Z}}
\newcommand{\cO}{{\mathcal O}}
\newcommand{\cU}{{\mathcal U}}
\newcommand{\cX}{{\mathcal X}}
\newcommand{\fl}{{\mathfrak{l}}}
\author{Brendan Hassett and Yuri Tschinkel}
\title[Hodge theory and Lagrangians]{Hodge theory and Lagrangian
planes on generalized Kummer fourfolds}
\begin{document}
\date{\today}

\maketitle

\section{Introduction} \label{sect:intro}

Suppose $X$ is a smooth projective complex variety.  
Let $\rN_1(X,\bZ) \subset \rH_2(X,\bZ)$ and $\rN^1(X,\bZ) \subset \rH^2(X,\bZ)$
denote the group of curve classes modulo homological equivalence and the 
N\'eron-Severi group respectively.  The monoids of effective classes
in each group generate cones $\NE_1(X) \subset \rN_1(X,\bR)$ and 
$\NE^1(X) \subset \rN^1(X,\bR)$ with closures $\oNE_1(X)$ and $\oNE^1(X)$,
the {\em pseudoeffective cones}.
These play a central r\^ole in the birational geometry of $X$.

Let $X$ be an irreducible holomorphic symplectic variety, i.e.,
a smooth projective simply-connected manifold admitting a unique
nondegenerate holomorphic two-form.  
Let $\left(,\right)$ denote the Beauville-Bogomolov form on the 
cohomology group $\rH^2(X,\bZ)$, normalized so that it is integral
and primitive.  Duality gives a $\bQ$-valued form on $\rH_2(X,\bZ)$,
also denoted $\left(,\right)$. 
When $X$ is a K3 surface
these coincide with the intersection form.  
In higher-dimensions, the form induces an inclusion
$$\rH^2(X,\bZ) \subset \rH_2(X,\bZ),$$
which allows us to extend $\left(,\right)$ to a $\bQ$-valued quadratic form.

Now suppose that $X$ contains a Lagrangian projective space $\bP^{\dim(X)/2}$;
let $\ell \in \rH_2(X,\bZ)$ denote the class of a line in $\bP^{\dim(X)/2}$, and 
$\lambda=N\ell \in \rH^2(X,\bZ)$
a positive integer multiple;  we can take $N$ to be the index of 
$\rH^2(X,\bZ) \subset \rH_2(X,\bZ)$.  Hodge theory \cite{Ran,Voisin} shows
that the deformations of $X$ containing a deformation of the Lagrangian
space coincide with the deformations of $X$ for which $\lambda \in \rH^2(X,\bZ)$
remains of type $(1,1)$.  Infinitesimal Torelli implies this is a divisor
on the deformation space, i.e., 
$$\lambda^{\perp} \subset \rH^1(X,\Omega^1_X) \simeq \rH^1(X,T_X).$$

Our goal is to establish intersection theoretic properties of $\ell$ for
various deformation-equivalence classes of holomorphic symplectic varieties.
Previous results in this direction include
\begin{enumerate}
\item If $X$ is a K3 surface then $\left(\ell,\ell\right)=-2$.
\item If $X$ is deformation equivalent to the Hilbert scheme of length two subschemes
of a K3 surface then $\left(\ell,\ell\right)=-5/2$ \cite{HTGAFA08}.
\end{enumerate}
Here we prove
\begin{theo} \label{theo:main}
If $X$ is deformation equivalent to the generalized Kummer manifold of dimension
four and $\ell$ the class of a line on a Lagrangian plane in $X$ then
$$\left(\ell,\ell \right)=-3/2.$$
\end{theo}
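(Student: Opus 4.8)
The plan is to reduce to an explicit example by deformation invariance and then pin down the form by intersection theory. The earlier discussion shows (via the cited Hodge theory of Ran and Voisin) that the pairs $(X,\bP^2)$ consisting of a deformation of the Kummer fourfold together with a Lagrangian plane deform in a family over the divisor $\lambda^{\perp}$ in the deformation space, so the rational number $\left(\ell,\ell\right)$ is a deformation invariant. Hence it suffices to exhibit one convenient model and compute there.

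For the model I would take an abelian surface $A$ containing an elliptic curve $E$ through the origin (so $A$ is isogenous to a product), and let the fourfold be the fiber $K_2(A)\subset A^{[3]}$ of the sum map over $0$. Embedding $E$ as a smooth plane cubic via $|\cO_E(3\cdot 0)|$ realizes $\bP^2=|\cO_E(3\cdot 0)|^*$, with dual plane $\cbP^2$ parametrizing lines. Since three collinear points of $E$ sum to $0$ in $E$, hence in $A$, the assignment sending a line $m$ to the length-three subscheme $m\cap E$ defines a morphism $\cbP^2\to K_2(A)$. I would first verify that this is a closed immersion, so that its image is a smooth $\bP^2$; because any $\bP^2$ in a holomorphic symplectic fourfold carries no holomorphic two-form and is therefore automatically Lagrangian, this produces the required Lagrangian plane, and a line $\ell$ in it is the pencil of lines through a fixed point $q\in\bP^2$.

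To compute $\left(\ell,\ell\right)$ I would use the duality isomorphism $\rH^2(X,\bQ)\simeq\rH_2(X,\bQ)$ furnished by the Beauville-Bogomolov form: the class $\ell$ corresponds to the unique $\ell^\vee\in\rH^2(X,\bQ)$ with $\left(\ell^\vee,D\right)=\ell\cdot D$ for every divisor $D$, and then $\left(\ell,\ell\right)=\left(\ell^\vee,\ell^\vee\right)$. Here $\rH^2(K_2(A),\bZ)=\rH^2(A,\bZ)\oplus\bZ e$, where $2e$ is the class of the diagonal divisor, $\left(e,e\right)=-6$, and the form restricts to the intersection form on the $\rH^2(A)$-summand. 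I would compute the two relevant pairings geometrically. Intersecting $\ell$ with the diagonal counts the non-reduced members of the pencil, i.e. the tangent lines to $E$ through $q$; since projection from $q$ exhibits $E$ as a triple cover of $\bP^1$, Riemann-Hurwitz gives six branch points, so $\ell\cdot(2e)=6$. Intersecting $\ell$ with the divisor $D_A^{[3]}$ induced by a curve $D_A\subset A$ counts the points of $D_A\cap E$, so this pairing equals $D_A\cdot E$. These identities force $\ell^\vee=[E]-\tfrac12 e$, and since $E$ is an abelian subvariety we have $E\cdot E=0$, whence $\left(\ell,\ell\right)=\left([E],[E]\right)+\tfrac14\left(e,e\right)=0-\tfrac32=-\tfrac32$.

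I expect the main obstacle to be the reduction step rather than the arithmetic: one must know that every Lagrangian plane in every deformation of the Kummer fourfold is carried by monodromy to the class produced by this construction, so that the single value $-3/2$ is genuinely universal. Establishing that the relevant extremal classes form one monodromy orbit, together with the verifications that $\cbP^2\to K_2(A)$ is an embedding and that the intersection numbers above carry no hidden multiplicities, is where the real care is needed; the final computation in the rank-one-plus-$\bZ e$ lattice is then immediate.
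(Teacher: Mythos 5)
Your explicit computation in the model is correct, and it is in fact the paper's own key special case (Section~\ref{sect:key}): for $A=E_1\times E_2$ and the plane coming from the linear series $3\cdot(0)$ on $E_1$, one gets $\lambda=6\ell=6E_1-3e$, $\left(\lambda,\lambda\right)=-54$, hence $\left(\ell,\ell\right)=-3/2$. The problem is the step you defer to the end: the reduction of the general case to this example. What the Hodge-theoretic deformation statement gives is only that $\left(\ell,\ell\right)$ is constant along a \emph{connected} family of pairs $(X,P)$; it says nothing about whether all pairs $(X,P)$, as $X$ ranges over all deformations of $K_2(A)$ and $P$ over all Lagrangian planes in them, lie in a single deformation/monodromy class. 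That transitivity statement is not a "verification requiring care" — it is essentially the whole content of the theorem, and it is not available. Indeed the paper itself cannot prove it: Theorem~\ref{theo:planes} pins down the combinatorial invariant $\Lambda\subset A[3]$ of a plane only up to being an affine $2$-plane, and whether \emph{isotropic} planes $\Lambda$ can occur (which would give a second, possibly unconnected, type of Lagrangian plane) is left as an open problem. So your proposed architecture has a circularity: you would need the classification of plane classes up to monodromy to run the reduction, but that classification is what one is trying to establish.

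The paper's route is structurally different and avoids the reduction entirely. Because the middle cohomology of $K_2(A)$ is \emph{not} generated by $\rH^2$ (Proposition~\ref{prop:decomp}), the class of an arbitrary Lagrangian plane must be written as
$$[P]=a\lambda^2+bc_2(X)+\widehat{Z},$$
where $\widehat{Z}$ lies in the rank-$80$ lattice of canonical Hodge classes spanned by the surfaces $Z_\tau$ (fixed loci of the involutions $\iota_\tau\in G_3$, Theorem~\ref{theo:hodge}); the paper even proves one \emph{cannot} take $\widehat{Z}=0$. The constraints $[P]^2=3$, $c_2(X)\cdot[P]=-3$, and $\lambda|_P=\left(\lambda,\ell\right)\ell$, combined with positive definiteness of the lattice $\Pi=\left<Z_\tau\right>$ and the integrality/congruence properties of its dual $\Pi'$ (calibrated using the explicit example you found), force $\left(\lambda,\lambda\right)\equiv 2 \pmod 8$, $\left(\lambda,\lambda\right)\equiv 0\pmod{27}$, and $-150\le\left(\lambda,\lambda\right)<134$, whence $\left(\lambda,\lambda\right)=-54$ for \emph{every} plane, with no monodromy-transitivity input. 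Your example computation would slot into that argument as the calibration step, but on its own it proves the theorem only for planes deformation-equivalent to that one.
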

In Section~\ref{sect:divisibility}, we discuss divisibility properties of $\ell$
and the orbit of $[\ell]$ under the monodromy action.  

This is part of a program described in \cite{HT09} to characterize numerically
classes of extremal rays in holomorphic symplectic manifolds, with a view
towards characterizing ample divisors in terms of the intersection
properties of the Beauville-Bogomolov form on the divisor/curve classes.  
We expect that geometric properties of birational contractions should be
encoded in the self-intersections of their extremal rational curves.
In the examples we have considered,
extremal rays associated to Lagrangian projective spaces have {\em smallest}
self-intersection.  See \cite{HTGAFA99,HTGAFA08} for a detailed discussion
of Hilbert schemes of length-two subschemes of K3 surfaces.  
Markman \cite{Mark3} addresses {\em divisorial} contractions and the associated
reflections.  

The main ingredients in our proof include an analysis of automorphisms of
generalized Kummer varieties, their fixed-point loci, and the resulting
`tautological' Hodge classes in middle cohomology.  (Unlike
the case of length-two Hilbert schemes, the middle cohomology is
not generated by
the second cohomology.)   In particular, these tautological classes arise
from explicit complex surfaces (see Theorem~\ref{theo:hodge}).  
We analyze the saturation of the lattice generated by these tautological
classes in the middle cohomology, and integrality properties
of the quadratic form associated with the cup
product.  Computing the orbit of a Lagrangian plane under the automorphism
group, we obtain a precise characterization of the homology classes that may arise (see
Proposition~\ref{prop:summary} and Theorem~\ref{theo:planes}).

\

{\bf Acknowledgments:} We are grateful to Lothar G\"ottsche, Daniel Huybrechts,
J\'anos Koll\'ar, Marc Nieper-Wisskirchen,
and Justin Sawon for useful conversations and correspondence.
The first author was supported by National Science Foundation Grant 0554491 and 0901645;
the second author was supported by National Science Foundation
Grants 0554280 and 0602333.  We appreciate the hospitality of the American Institute of
Mathematics, where some of this work was done.

\section{Automorphisms of holomorphic symplectic manifolds}

Let $X$ be an irreducible holomorphic symplectic manifold.  Let
$$
\Aut^{\circ}(X) \subset \Aut(X)
$$ 
denote the subgroup of 
holomorphic automorphisms of $X$ acting trivially on $\rH^1(T_X)$
and preserving the symplectic form $\omega$.   
Since $X$ has no vector fields,
this is a discrete group.  
This is equivalent to the automorphisms
acting trivially on $\rH^2(X,\bC)$, as
$$\begin{array}{rcl}
\rH^2(X,\bC)&=&\rH^2(\cO_X) \oplus \rH^1(\Omega^1_X) \oplus \rH^0(\Omega^2_X) \\
            &=& \overline{\bC \omega} \oplus (\omega \otimes \rH^1(T_X)) \oplus \bC\omega.
\end{array}
$$

Let $X'$ be deformation equivalent to $X$, i.e., there
exists a connected complex manifold $B$, with distinguished points
$b$ and $b'$, and a proper family of complex manifolds $\pi:\cX \ra B$
with $\cX_b:=\pi^{-1}(b)=X$ and $\cX_{b'}=\pi^{-1}(b')=X'$.  

\begin{theo} \label{theo:action}
$\Aut^{\circ}(X)$ is a deformation invariant of $X$, i.e., there
exists a local system of groups
$$\Aut^{\circ}(\cX/B) \ra B$$
acting on $\cX \ra B$,
such that for each $b' \in B$ the fiber is isomorphic to
$\Aut^{\circ}(X')$.  
\end{theo}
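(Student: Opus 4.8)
The plan is to exhibit $\Aut^{\circ}$ as an open and closed sub-group-space of the relative automorphism space of $\pi \colon \cX \ra B$ and to prove that it is étale over $B$, hence a covering, i.e.\ a local system of groups. First I would invoke the existence of the relative automorphism space
$$\Aut(\cX/B) \ra B,$$
a complex space over $B$, locally of finite type, whose fiber over $b'$ is $\Aut(\cX_{b'})$; it is constructed from the relative Douady space of graphs inside $\cX \times_B \cX$. Within it I single out the locus $\Aut^{\circ}(\cX/B)$ of automorphisms acting trivially on the second cohomology of the fiber. Since $R^2 \pi_* \bC$ is a local system on $B$ and the induced action of an automorphism on cohomology is locally constant, the condition of acting trivially on $\rH^2$ is simultaneously open and closed; thus $\Aut^{\circ}(\cX/B)$ is open and closed in $\Aut(\cX/B)$, with fiber $\Aut^{\circ}(\cX_{b'})$ over each $b' \in B$.

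Next I would show that $\Aut^{\circ}(\cX/B) \ra B$ is étale. Fix $g \in \Aut^{\circ}(\cX_b)$. The tangent space to the fiber of $\Aut(\cX/B) \ra B$ at $g$ is $\rH^0(\cX_b, T_{\cX_b})$, which vanishes because an irreducible holomorphic symplectic manifold has no nonzero vector fields; hence the projection is unramified at $g$. For smoothness I would analyze the obstruction to extending $g$ over a first-order deformation. A tangent vector to $B$ at $b$ has Kodaira--Spencer image $\xi \in \rH^1(\cX_b, T_{\cX_b})$, and $g$ lifts to an automorphism of the corresponding deformed fiber precisely when that fiber is fixed by the action of $g$ on the deformation space; to first order the obstruction is the class
$$g^* \xi - \xi \in \rH^1(\cX_b, T_{\cX_b}).$$
Because $g$ acts trivially on $\rH^2(\cX_b, \bC)$, it acts trivially on the summand $\rH^1(\Omega^1_{\cX_b}) \simeq \rH^1(\cX_b, T_{\cX_b})$ --- the isomorphism being induced by the symplectic form, which $g$ preserves --- so this obstruction vanishes for every $\xi$. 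The lift is then unique since $\rH^0(T_{\cX_b}) = 0$, and by the Bogomolov--Tian--Todorov unobstructedness of these manifolds the formal lift converges; therefore $\Aut^{\circ}(\cX/B) \ra B$ is étale at $g$.

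Finally, I would upgrade étaleness to the structure of a local system. Each $\Aut^{\circ}(\cX_{b'})$ is finite, since it preserves a Kähler class while $\cX_{b'}$ carries no vector fields; combined with the étale (local homeomorphism) property and the open-and-closed inclusion into $\Aut(\cX/B)$, this forces the fiber cardinality to be locally constant, so $\Aut^{\circ}(\cX/B) \ra B$ is a covering space. Transporting the fiberwise group law along this covering yields the asserted local system of groups acting on $\cX \ra B$, with fiber $\Aut^{\circ}(\cX_{b'})$. I expect the crux to be the second step: correctly identifying the deformation obstruction with $g^*\xi - \xi$ and pinning its vanishing to the triviality of the $\rH^2$-action through the symplectic identification $\rH^1(\Omega^1_{\cX_b}) \simeq \rH^1(\cX_b, T_{\cX_b})$, since it is exactly here that the definition of $\Aut^{\circ}$ enters and makes the entire family of automorphisms rigid.
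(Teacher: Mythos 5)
Your first two steps are sound and in fact parallel the paper's opening move: the paper constructs an $\Aut(X)$-equivariant Kuranishi family and observes that $\Aut^{\circ}(X)$ acts trivially on the base $\Delta$ because it acts trivially on $T_0\Delta = \rH^1(T_X)$, which is exactly the content of your obstruction computation $g^*\xi - \xi = 0$; both arguments establish that $\Aut^{\circ}(\cX/B) \ra B$ is a local homeomorphism (\'etale). The genuine gap is in your third step. For a local homeomorphism with finite fibers, the fiber cardinality is only lower semicontinuous: every automorphism of the central fiber deforms to nearby fibers (that is what \'etaleness gives), but nothing so far prevents a sequence of automorphisms $\alpha_n \in \Aut^{\circ}(\cX_{b_n})$, $b_n \ra b'$, from escaping every compact set, i.e.\ failing to converge to an automorphism of $\cX_{b'}$. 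Your appeal to the open-and-closed inclusion into $\Aut(\cX/B)$ buys nothing, because $\Aut(\cX/B) \ra B$ is itself not proper: in the Douady/Barlet space of cycles in $\cX \times_B \cX$, graphs of automorphisms can limit to cycles that are not graphs. So ``\'etale $+$ finite fibers $+$ closed in $\Aut(\cX/B)$ $\Rightarrow$ covering'' is not a valid inference, and it silently assumes precisely the properness that has to be proved.

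That properness (universal closedness) is where the paper does essentially all of its work, and it is nontrivial because for higher-dimensional holomorphic symplectic manifolds bimeromorphic self-maps need not be biregular. The paper's argument: by Bishop compactness the graphs $\Gamma_{\alpha_n}$ have a convergent subsequence with limit cycle $\Gamma' = Z + \sum_i Y_i \subset X' \times X'$, where $Z$ is the graph of a bimeromorphic map $\phi$; since $\Gamma'$ is a specialization of the identity correspondence, $\Gamma'_* = \mathrm{Id}$ on $\rH^2$, and Huybrechts' analysis of the birational K\"ahler cone rules out components $Y_i$ dominating divisors (their pushforward contribution would contradict $\Gamma'_* = \mathrm{Id}$), so $\phi_* = \mathrm{Id}$ on $\rH^2$; finally Fujiki's theorem, that a bimeromorphic map of $K$-trivial manifolds respecting K\"ahler cones is an isomorphism, shows $\phi$ is an honest automorphism, hence the limit of the $\alpha_n$ exists in $\Aut^{\circ}(X')$. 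Without an argument of this kind, your proposal cannot exclude the scenario in which special fibers have strictly fewer automorphisms than nearby ones --- automorphisms degenerating to non-regular bimeromorphic maps --- and the conclusion that $\Aut^{\circ}(\cX/B) \ra B$ is a local system would fail.
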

\begin{proof}
Consider a local universal deformation space of $X$
$$\psi:\cU \ra \Delta,$$
where $\Delta$ is a small polydisk and $\cU_0=X$ \cite{Kur}.  The
completeness of this family implies we can construct
this equivariantly for the action of $\Aut(X)$, i.e.,
$\Aut(X)$ acts on $\cU$ and $\Delta$ and $\psi$ is equivariant with
respect to these actions.  However, $\Aut^{\circ}(X)$ acts
trivially on the tangent space $T_0\Delta=\rH^1(T_X)$,
thus acts trivially on $\Delta$ as well.  It follows that $\Aut^{\circ}(X)$
acts fiberwise on $\cU \ra \Delta$.  

Let $\Aut^{\circ}(\cX/B)$ denote the group over $B$ classifying
automorphisms acting trivially on $\rH^2({\cX_b})$ for each $b\in B$.  
The previous analysis shows $\Aut^{\circ}(\cX/B) \ra B$ is a local homeomorphism.
It remains to show this is universally closed.

We first show that each automorphism specializes to a bimeromorphic mapping
$\phi:X' \dashrightarrow X'$.  The reasoning is identical to the
proof of \cite[4.3]{Hu99},
i.e., that two non-separated points in the moduli space correspond to 
bimeromorphic holomorphic symplectic manifolds.  
Consider a convergent sequence $b_n \ra b'$ and a sequence 
$\alpha_n \in \Aut^{\circ}(\cX_{b_n})$.  The sequence of graphs
$$\Gamma_n:=\Gamma_{\alpha_n} \subset \cX_{b_n} \times \cX_{b_n}$$
admits a convergent subsequence.
Let $\Gamma' \subset X' \times X'$ denote the resulting limit cycle, which
necessarily includes a component $Z$ that maps bimeromorphically to each factor.  
We take $\phi$ to be the bimeromorphic map associated with $Z$.  

The second step is to show that 
$$\phi_*:\rH^2(X',\bZ) \ra \rH^2(X',\bZ)$$
is the identity.  This essentially follows from the description of
the `birational K\"ahler cone' in \cite[\S 4]{Hu03}, but we offer a proof here.
Since $\Gamma'$ is the specialization of a correspondence
acting as the identity on $\rH^2$, we know that $\Gamma'_*=\mathrm{Id}$.  
Express
$$\Gamma'=Z + \sum_i Y_i \subset X' \times X',$$
where the $Y_i$ map to proper analytic subsets of each factor;
let $\pi_1$ and $\pi_2$ denote the projections.  It suffices to
show that $\pi_j(Y_i),j=1,2,$ has codimension $>1$;  then 
$Y_i$ acts trivially on $\rH^2$ and $\phi_*=\Gamma'_*=\mathrm{Id}$ on $\rH^2$.
Since $\phi$ is a bimeromorphic
map from a holomorphic symplectic manifold to itself, it is an isomorphism in 
codimension one and $\phi_*$ is an isomorphism of $\rH^2$.  
Enumerate the $Y_i$ such that $\pi_1(Y_i)$ has codimension one, i.e.,
$Y_1,\ldots,Y_k$;  since $\phi$ is an isomorphism in codimension one, these coincide
with the components such that $\pi_2(Y_i)$ has codimension one 
(cf. the proofs of \cite[2.5,4.2]{Hu03}).  
Furthermore, $\phi$ respects these divisors in the sense
$$\phi_*(\sum_{i=1}^k\pi_1(Y_i))=\sum_{i=1}^k \pi_2(Y_i);$$
indeed, $Y_i$ is ruled over both $\pi_1(Y_i)$ and $\pi_2(Y_i)$.  
In this situation, we have \cite[p. 508]{Hu03}
$$
\sum_{i=1}^k [Y_i]_*(\sum_{i=1}^k \pi_1(Y_i))=-\sum_{i=1}^k b_i \pi_2(Y_i),
\quad b_i>0.
$$
The previous two equations contradict the fact that $\Gamma'$ introduces
the identity on the second homology.

Thus $\phi$ is a bimeromorphic mapping of $K$-trivial varieties respecting K\"ahler cones.
A result of Fujiki \cite{Fu81} implies $\phi$ is an isomorphism.
\end{proof}
\begin{rema}
This is related to unpublished results of Kaledin and Verbitsky \cite[\S 6]{KV},
where such automorphisms of generalized Kummer varieties $K_n(A)$ were used to 
exhibit nontrivial trianalytic subvarieties of $K_n(A)$.  
\end{rema}

\section{Application to generalized Kummer varieties}

Suppose that $X$ is a holomorphic symplectic variety of dimension $2n$,
deformation equivalent to a generalized Kummer variety $K_n(A)$, defined as follows:  Given
an abelian surface $A$ with degree $(n+1)$ Hilbert scheme $A^{[n+1]}$, 
$K_n(A)$ is the fiber over $0$ of the addition map $\alpha:A^{[n+1]} \ra A$.
The Beauville-Bogomolov form is given by
\begin{equation} \label{eqn:BBform}
\rH^2(K_n(A),\bZ) = \rH^2(A,\bZ) \oplus_{\perp} \bZ e, \quad \left(e,e\right)=-2(n+1),
\end{equation}
where $2e$ is the class $E$ of the nonreduced subschemes
\cite[\S 4.3.1]{Yosh01}.  
Each class $\eta \in \rH^2(A,\bZ)$ yields a class in $\rH^2(K_n(A),\bZ)$, i.e.,
the subschemes with some support along $\eta$.
We use $\left(,\right)$ to
embed $\rH^2(X,\bZ) \subset \rH_2(X,\bZ)$ and extend $\left(,\right)$ to a $\bQ$-valued
form on $\rH_2(X,\bZ)$.  
Let $e^{\vee} \in \rH_2(K_n(A),\bZ)$ denote the class of a general ruling of $E$,
i.e., where $n-1$ of the points are fixed and the tangent vector at the $n$th
point varies.  We have
$$e=2(n+1)e^{\vee}, \quad \left(e^{\vee},e^{\vee}\right)=-\frac{1}{2(n+1)}.$$

The following data can be extracted from 
G\"ottsche's generating series for the cohomology of 
Kummer varieties \cite[p. 50]{Gott94}.  The table displays the Betti numbers
$\beta_{\nu}(K_n(A))$ for $n \le 4$:
$$\begin{array}{r|cccc}
n  & 1 & 2 & 3 & 4 \\
\nu \quad   & & & & \\
\hline
0 \quad & 1 & 1 & 1 & 1 \\
1 \quad & 0 & 0 & 0 & 0 \\
2 \quad & 22 & 7 & 7 & 7 \\
3 \quad & 0 & 8 &  8 & 8 \\
4 \quad & 1 & 108 & 51 & 36 \\
5 \quad &  & 8 & 56 & 64 \\
6 \quad &  & 7 & 458 & 168 \\
7 \quad &  & 0 & 56 & 288 \\
8 \quad &  & 1 & 51 & 1046
\end{array}
$$

Consider the group
$$G_n= \bZ/2\bZ \ltimes (\bZ/n\bZ)^4 $$
where $\bZ/2\bZ$ acts on $(\bZ/n\bZ)^4$ via $\pm 1$.  
Let $A$ be an abelian surface and identify $A[n]=(\bZ/n\bZ)^4$.  
Then $A[n]$ acts on $A$ via translation and $\bZ/2\bZ$ acts
on $A$ via $\pm 1$.  Altogether, we get an action 
$$G_n \times A \ra A.$$
Note that the induced action is trivial on $\rH^2(A)$.  Conversely,
any finite group acting on $A$ such that the induced action on $\rH^2(A)$
is trivial is a subgroup of $G_n$ for some $n$.  Indeed, actions fixing $0$
are determined by the associated representation on $\rH^1(A)$, but the only
group acting on $\rH^1(A)$ such that the induced action on $\bigwedge^2(\rH^1(A))=\rH^2(A)$
is trivial is $\pm 1$.  

Theorem~\ref{theo:action} and the description on the second cohomology of generalized
Kummer manifolds yields:
\begin{prop} \label{prop:groupaction}
For each $n>2$, the action of $G_n$ on $A$ induces a natural action 
on $K_{n-1}(A)$, which is trivial
on $\rH^2(K_{n-1}(A))$.  This action extends to a natural action on
any deformation $X$ 
on $K_{n-1}(A)$.
\end{prop}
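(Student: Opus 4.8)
The plan is to assemble the statement from three pieces already developed above: the group-theoretic action of $G_n$ on $A$, its induced action on the Kummer variety $K_{n-1}(A)$, and the deformation-invariance machinery of Theorem~\ref{theo:action}. First I would establish that the action of $G_n$ on $A$ lifts to $K_{n-1}(A)$. Since $K_{n-1}(A)$ is defined as the fiber over $0 \in A$ of the addition map $\alpha:A^{[n]} \ra A$, any automorphism of $A$ induces an automorphism of the Hilbert scheme $A^{[n]}$ functorially, and I would check that the $G_n$-action preserves the fiber $\alpha^{-1}(0)$. The translation part $A[n]=(\bZ/n\bZ)^4$ shifts the sum of $n$ points by $n$ times the translation vector, which is $0$ since the vectors are $n$-torsion; the $\pm 1$ part negates the sum, preserving $0$. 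So the fiber over $0$ is preserved, and we obtain an action of $G_n$ on $K_{n-1}(A)$.

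\medskip

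Next I would verify that this induced action is trivial on $\rH^2(K_{n-1}(A))$. Here I would invoke the decomposition~\eqref{eqn:BBform}, namely $\rH^2(K_{n-1}(A),\bZ)=\rH^2(A,\bZ)\oplus_{\perp}\bZ e$. By construction (stated in the paragraph defining $G_n$), the $G_n$-action on $A$ is trivial on $\rH^2(A)$, so the action is trivial on the first summand. The class $e$ is canonically the exceptional class of nonreduced subschemes, hence preserved by any automorphism of $A^{[n]}$ arising from an automorphism of $A$; so the action fixes $e$ as well. This shows $G_n$ acts trivially on all of $\rH^2(K_{n-1}(A))$, which in particular means it lands in $\Aut^{\circ}(K_{n-1}(A))$ in the sense of Section~2 (the automorphisms preserve the symplectic form, being induced from $A$).

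\medskip

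Finally, to extend the action to an arbitrary deformation $X$, I would apply Theorem~\ref{theo:action} directly: since $G_n \subset \Aut^{\circ}(K_{n-1}(A))$ and $\Aut^{\circ}$ forms a local system of groups over any deformation base $B$ whose fibers are the $\Aut^{\circ}$ of the respective fibers, the subgroup $G_n$ propagates along $B$ to an action on $X=\cX_{b'}$. The hypothesis $n>2$ is presumably needed so that the action is honestly nontrivial as a \emph{finite group} action (for small $n$ some generators may act trivially or the group may degenerate), and I would confirm this point against the Betti-number table and the torsion structure. The main obstacle I anticipate is the second step's claim that the $G_n$-action is trivial not merely on the $\rH^2(A)$ summand but on the full $\rH^2(K_{n-1}(A))$, since a priori the action could mix the exceptional class $e$ with classes pulled back from $A$, or could act nontrivially on $\bZ e$ by a sign; ruling this out requires the geometric fact that $e$ is intrinsically characterized (as half the class of the nonreduced locus) and therefore monodromy/automorphism invariant. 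Everything else is functoriality of the Hilbert scheme construction plus a direct citation of Theorem~\ref{theo:action}.
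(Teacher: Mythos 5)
Your proposal is correct and takes essentially the same route as the paper, which derives the proposition in one line from Theorem~\ref{theo:action} together with the decomposition \eqref{eqn:BBform}; your write-up simply makes explicit the two checks the paper leaves implicit (the $G_n$-action preserves $\alpha^{-1}(0)$, and it fixes both summands $\rH^2(A,\bZ)$ and $\bZ e$). One clarification on the point you left open: the hypothesis $n>2$ is not about nontriviality of the action but is exactly what makes your second step valid, since the decomposition $\rH^2(K_{n-1}(A),\bZ)=\rH^2(A,\bZ)\oplus_{\perp}\bZ e$ requires $\dim K_{n-1}(A)\ge 4$; for $n=2$ the Kummer K3 surface $K_1(A)$ has $b_2=22$ (see the Betti table) and translation by $A[2]$ permutes its sixteen exceptional $(-2)$-curves, so the action is genuinely nontrivial on $\rH^2$ there.
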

The action of $G_n$ on $K_{n-1}(A)$ has been considered previously, e.g.,
\cite{Salamon,BNWS}.

\section{Analysis of the cohomology of $K_2(A)$}  
\label{sect:detailed}

We recall the structure of the cohomology ring of $K_2(A)$, following
\cite{Verb96}, \cite[\S 4]{LL}, \cite{NWnote}, and \cite{Salamon}:

\begin{prop} 
\cite[4.6]{LL}  \label{prop:decomp}
Let $X$ be deformation equivalent to $K_2(A)$ for $A$ an
abelian surface.  The Lie algebra $\mathfrak{so}(4,5)$ acts on the 
cohomology $\rH^*(X)$ which admits a decomposition
$$\rH^*(X)= \Sym(\rH^2(X))  \oplus {\bf 1}^{80}_X \oplus (\rH^3(X) \oplus \rH^5(X)),$$
where the middle term is a trivial $\mathfrak{so}(4,5)$-representation
consisting of Hodge cycles in $\rH^4(X)$ and the 
last term carries the structure of a sixteen-dimensional spinor representation.  

Furthermore, the induced decomposition on middle cohomology
\begin{equation} \label{eqn:midcohom}
\rH^4(X)= \Sym^2(\rH^2(X)) \oplus_{\perp} {\bf 1}^{80}_X
\end{equation}
is orthogonal under the intersection form.  
\end{prop}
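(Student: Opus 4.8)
The plan is to realize the stated decomposition as the isotypic decomposition of $\rH^*(X)$ under the Verbitsky--Looijenga--Lunts Lie algebra, and then to read off the orthogonality from invariance of the cup product. First I would construct $\mathfrak{g}=\mathfrak{g}(X)\subset\End(\rH^*(X))$: for every class $\alpha\in\rH^2(X,\bQ)$ of Lefschetz type, cup product $e_\alpha$, the grading operator $h$ (acting on $\rH^k(X)$ by $k-4$), and the adjoint lowering operator $f_\alpha$ form an $\mathfrak{sl}_2$-triple, and $\mathfrak{g}$ is the Lie algebra they generate. The structure theorem of Verbitsky and Looijenga--Lunts identifies $\mathfrak{g}\cong\mathfrak{so}(\tilde{H})$, where $\tilde{H}=\rH^2(X,\bQ)\oplus_{\perp}U$ is the Beauville--Bogomolov lattice extended by a hyperbolic plane $U$. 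For $X$ deformation equivalent to $K_2(A)$ we have $b_2=7$ and Beauville--Bogomolov signature $(3,4)$, so $\tilde{H}$ has signature $(4,5)$ and $\mathfrak{g}\cong\mathfrak{so}(4,5)$, with complexification $\mathfrak{so}_9$ of type $B_4$.

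Next I would decompose $\rH^*(X)$ into $\mathfrak{g}$-irreducibles. Let $V=\tilde{H}$ denote the $9$-dimensional standard representation; the element $h$ acts on $V$ with eigenvalues $\{-2,0^{(7)},+2\}$, the kernel being $\rH^2(X)$. The Verbitsky component---the $\mathfrak{g}$-submodule generated by $1\in\rH^0(X)$, equivalently the image of $\Sym^{\bullet}\rH^2(X)\to\rH^*(X)$---is irreducible by Verbitsky's theorem, and a weight count matches it degree by degree with the traceless symmetric square $\Sym^2_0 V$: dimensions $1,7,28,7,1$ in degrees $0,2,4,6,8$, for a total of $44$. In degree $4$ this is precisely $\Sym^2\rH^2(X)$, of dimension $28$ (using that $\Sym^2\rH^2(X)\hookrightarrow\rH^4(X)$ is injective for a fourfold). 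Passing to a $\mathfrak{g}$-invariant complement, possible since $\mathfrak{g}$ is semisimple, the Betti numbers show the complement vanishes in degrees $0,2,6,8$; hence, by invariance, in degree $4$ every $e_\alpha$ and $f_\alpha$ annihilates it, so it is $\mathfrak{g}$-trivial. Its dimension is $108-28=80$, giving ${\bf 1}^{80}_X$; comparing Hodge numbers (the summand $\Sym^2\rH^2(X)$ already exhausts $\rH^{4,0}$ and $\rH^{3,1}$) shows these $80$ classes are of type $(2,2)$, hence Hodge. Finally, $\rH^3(X)\oplus\rH^5(X)$ is a $\mathfrak{g}$-submodule of dimension $b_3+b_5=16$ on which Lefschetz acts nontrivially; since $B_4$ has no irreducible representation of dimension $8$, this must be the unique $16$-dimensional spinor representation.

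For the orthogonality assertion I would argue formally: the cup product $\rH^4(X)\times\rH^4(X)\to\rH^8(X)=\bQ$ is $\mathfrak{g}$-invariant, since the generators of $\mathfrak{g}$ act by operators compatible with Poincar\'e duality. Under an invariant pairing, isotypic components belonging to non-isomorphic irreducibles are orthogonal; as $\Sym^2\rH^2(X)$ lies in the $\Sym^2_0 V$-isotypic part and ${\bf 1}^{80}_X$ is trivial, the two summands pair to zero, which is the desired orthogonal splitting of $\rH^4(X)$.

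The main obstacle is the precise irreducible decomposition in middle cohomology---identifying the Verbitsky component as exactly $\Sym^2_0 V$ and its complement as the trivial isotypic piece ${\bf 1}^{80}_X$, rather than some larger or reducible module. This is exactly where the Verbitsky--Looijenga--Lunts machinery is essential; concretely it reduces to verifying the commutation relations defining $\mathfrak{g}\cong\mathfrak{so}(4,5)$ and checking, via the Fujiki relations, that the raising and lowering operators behave as the weight computation predicts on $\rH^4(X)$. Once this representation-theoretic skeleton is in place, the Hodge-theoretic and orthogonality statements follow with little further work.
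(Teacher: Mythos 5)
The paper itself gives no argument for this proposition: it is quoted directly from Looijenga--Lunts \cite[4.6]{LL}, and your proposal is essentially a reconstruction of the proof behind that citation. Your skeleton is sound: the LLV structure theorem giving $\mathfrak{g}\cong\mathfrak{so}(4,5)$ for $b_2=7$, Verbitsky's irreducibility of the subalgebra generated by $\rH^2$ and its identification with the traceless symmetric square of the $9$-dimensional standard representation (dimensions $1,7,28,7,1$ in degrees $0,2,4,6,8$), and the Betti-number count showing that an invariant complement of the Verbitsky component in even cohomology is concentrated in degree $4$, hence killed by all $e_\alpha$ and $f_\alpha$ and therefore trivial of dimension $108-28=80$.

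Two steps, however, need repair. First, your orthogonality argument rests on the claim that the cup-product pairing is $\mathfrak{g}$-invariant; it is not. For $\alpha\in\rH^2$ one has $\int_X (e_\alpha x)\cup y=\int_X x\cup (e_\alpha y)$, so $e_\alpha$ is \emph{self}-adjoint rather than skew-adjoint for the Poincar\'e pairing (while $h$ is skew-adjoint), and in any case $\mathfrak{g}$ does not preserve $\rH^4(X)$, so ``invariant pairing on $\rH^4$'' does not even typecheck. The fix is shorter than the broken argument: if $z$ lies in a trivial summand, then $e_\alpha z=z\cup\alpha=0$ for every $\alpha\in\rH^2(X)$, since the degree $+2$ part of $\mathfrak{g}$ consists precisely of the operators $e_\alpha$; hence $\int_X z\cup\alpha\cup\beta=0$ for all $\alpha,\beta$, which is exactly the orthogonality of ${\bf 1}^{80}_X$ and $\Sym^2(\rH^2(X))$. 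Second, your identification of $\rH^3(X)\oplus\rH^5(X)$ with the spin representation is incomplete: ``no irreducible of dimension $8$'' together with ``Lefschetz acts nontrivially'' does not exclude, say, $V\oplus {\bf 1}^{\oplus 7}$ with $V$ the $9$-dimensional standard representation, on which Lefschetz also acts nontrivially. The correct exclusion is by weight parity: $h$ acts on $\rH^3\oplus\rH^5$ with eigenvalues $\pm 1$ only, whereas every integer-weight irreducible of $\mathfrak{so}(4,5)$ (in particular ${\bf 1}$ and $V$) has only even $h$-eigenvalues; the unique half-integer-weight irreducible of dimension at most $16$ is the spin representation, so the odd cohomology must be it. Finally, note that your parenthetical ``$\Sym^2(\rH^2(X))$ exhausts $\rH^{4,0}$ and $\rH^{3,1}$'' silently uses $h^{3,1}(K_2(A))=5$, which is true but requires the G\"ottsche--Soergel computation of Hodge (not just Betti) numbers, plus the observation that the $80$-dimensional piece is a sub-Hodge structure (e.g.\ because it equals the kernel of cup product with $\rH^2$, a morphism of Hodge structures).
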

Decomposition (\ref{eqn:midcohom}) is not the only natural decomposition
of the middle cohomology.  The second Chern class $c_2(X)$ is also an invariant 
element of $\rH^4(X)$ but is not orthogonal to $\Sym^2(\rH^2(X))$ \cite[4.8]{LL};
indeed, we have the Fujiki relation
$$c_2(X)\cdot D_1 \cdot D_2 = a_X \left(D_1,D_2\right), \quad D_1,D_2 \in H^2(X),$$
where $a_X$ is a nonzero constant (which will be computed explicitly below).  Writing 
$$\Sym^2(\rH^2(X))^{\circ}=c_2(X)^{\perp} \cap \Sym^2(\rH^2(X)),$$
we obtain an alternate decomposition
\begin{equation} \label{eqn:midcohom2}
\rH^4(X)= \Sym^2(\rH^2(X))^{\circ} \oplus_{\perp} {\bf 1}^{81}_X.
\end{equation}
By the infinitesimal Torelli theorem, the first summand contains no Hodge classes for general
deformations $X$.  Indeed, since the period map is a local diffeomorphism, $\rH^2(X)$ carries a general
weight-two Hodge structure with the relevant numerical invariants;  such a Hodge
structure is {\em Mumford-Tate generic} \cite[\S 2]{Verb98}.   Hodge cycles in $\Sym^2(\rH^2(X))$
yield invariants for the action of the Hodge group of $\rH^2(X)$, which is the special orthogonal
group for $\rH^2(X)$ under the Beauville-Bogomolov form.  The only such invariants
in $\Sym^2(\rH^2(X))$ are multiples of the dual to the Beauville-Bogomolov form.

Integral classes in the summand ${\bf 1}_X^{81}$ are called
{\em canonical Hodge classes}, because they remain of type $(2,2)$ under
arbitrary deformations.  

\

We construct $81$ distinguished rational surfaces in $X=K_2(A)$, whose classes 
span an $81$-dimensional subspace in $\rH^4(X,\bQ)$ which contains the
summand ${\bf 1}^{80}_X$ above.  This 81-dimensional space is different from the 
subspace indicated in (\ref{eqn:midcohom2}), but we will describe explicitly
how they are related.  

For each $\tau \in A$, let $W_{\tau}$ denote the subschemes in
$A^{[3]}$ supported entirely
at $\tau$, with the induced reduced scheme structure.  Brian\c{c}on \cite[p. 76]{Br}
gives explicit equations for the corresponding subscheme of the Hilbert
scheme, via local coordinates.  Eliminating embedded components from these equations,
we find that $W_{\tau} \simeq \bP(1,1,3)$, realized as a cone over a twisted cubic in $\bP^4$.  

In the case where $\tau \in A[3]$, we have $W_{\tau} \subset K_2(A)$.  This
yields $81$ disjoint copies of $\bP(1,1,3)$.   We recover some of their intersection
properties:
\begin{itemize}
\item{$W_{\tau}^2=3$;}
\item{$W_{\tau}e^2=3$;}
\item{$W_{\tau}c_2(K_2(A))=-1$.}
\end{itemize}
We prove these assertions.  Using the fibration 
$$\begin{array}{rcl}
K_2(A) & \ra & A^{[3]} \\
	&    & \downarrow \Sigma \\
        &    &   A
\end{array}
$$
we can reduce the computation of the first number to the main result of \cite{ES98}.  
For the second number, it suffices to check that
$$\cO_{K_2(A)}(e)|W_{\tau}=\cO_{\bP(1,1,3)}(-H)$$
where $H$ is the hyperplane class associated with $\bP(1,1,3) \hookrightarrow \bP^4$.
Let $R$ denote the extremal ray corresponding to the
generic fiber of the diagonal divisor (in $K_2(A)$) over the diagonal of the
symmetric product.  
The diagonal divisor has class $2e$ and $2e\cdot R=-2$, as the symmetric product
has $A_1$-singularities at the generic point of the diagonal.  Thus $e\cdot R=-1$.  
However, $R$ specializes to some cycle of rational curves $R_0 \subset W_{\tau}$ and
$e|W_{\tau}$ is a nontrivial Cartier divisor, i.e., $e=-nH$ for some $n$.  Thus
$$-1=e\cdot R = -n H \cdot R_0,$$
hence $n=1$ and $R_0$ is a ruling of $\bP(1,1,3)$.  
For the third number, consider the induced morphism
$$\phi:\bF_3 \ra \bP(1,1,3) \simeq W_{\tau} \hookrightarrow K_2(A)$$
from the Hirzebruch surface resolving $W_{\tau}$;  let $\Sigma_0$
denote the $(-3)$-curve contracted by $\phi$.  We have the exact sequence
$$0 \ra T_{\bF_3} \ra \phi^* T_{K_2(A)} \ra N_{\phi} \ra 0$$
which implies
$$c_2(\phi^*T_{K_2(A)})=c_2(N_{\phi})-4.$$
We may interpret 
$$c_2(N_{\phi})+\{\text{contribution of $\Sigma_0$ to the double point formula} \}=W_{\tau}^2=3,$$
and an excess-intersection computation \cite[ch.~9]{Fulton} shows that contribution
of $\Sigma_0$ is zero.  We conclude that $c_2(\phi^*T_{K_2(A)})=-1$.  

\begin{rema}[contributed by M. Nieper-Wisskirchen]
These computations can also be put into the framework of Nakajima's
\cite{Nakajima} description of the cohomology of the Hilbert scheme.

Consider the Galois cover 
$$\begin{array}{rcl}
v: A \times K^{n - 1}(A) &\ra& A^{[n]} \\
 (a, \xi) &\mapsto& a + \xi
\end{array}
$$
where $a + \xi$ denotes the
translate of $\xi$ by $a$. 
The Galois group is $A[n]$. Let $B_0$ be the Brian\c{c}on
variety in $A^{[n]}$
of those subschemes whose support is $0$. Then $v^* B_0$ is the
same as $0 \times W$ in the cohomology ring of $A \times  K^{n - 1}(A)$, 
where $W=\sum_{\tau \in A[n]}W_{\tau}$.
Let $B$ be the Brian\c{c}on variety in $A^{[n]}$ of those subschemes
that are supported at a single arbitrary point.  Then $v^* B = A \times W$. 

Now assume $n=3$.  To show that 
$W_\tau^2 = 3$, we have to show $(v^* B_0) \cdot (v^* B) = 81 \cdot 3$, as the
different $W_\tau$ are orthogonal.  As $\deg(v)=81$, this is the same
as showing that $B_0 \cdot B = 3$ on the Hilbert scheme. 
Now $B_0$ happens to be
$\left.q_3(\omega)|{\mathbf 0}\right>$, where $\omega$ is the codimension-four
class of a point in $A$, and
$B$ is $\left.q_3(1)|{\mathbf 0}\right>$, using the standard notation for the cohomology classes
in the Nakajima basis \cite[\S 4]{BoissiereNW}.  Using the well-known commutation relations between
these operators, one gets that the Poincar\'e duality pairing of
$\left.q_3(\omega)|{\mathbf 0}\right>$ and $\left.q_3(1)|{\mathbf 0}\right>$
on the Hilbert scheme is exactly $3$.  

Intersections with $c_2$ can be obtained using the techniques of \cite{Boissiere,BoissiereNW}.
\end{rema}

Consider the averaged class
$$W:=\sum_{\tau \in A[3]} W_{\tau}$$
with intersection properties
\begin{itemize}
\item{$W^2=81 \cdot 3 = 243$;}
\item{$W e^2 = 243$;}
\item{ $W c_2(K_2(A))=-81$.}
\end{itemize}
It follows that
$$W=\frac{3}{8}(c_2(X)+3e^2).$$

Given $p \in A$ consider the locus
$$Y_p=\{(a_1,a_2,p):a_1+a_2+p=0 \} \subset K_2(A).$$
Exchanging the first two terms induces
$$\iota: a_1 \mapsto -p-a_1$$
on the first factor,
which is conjugate to the sign involution.  (Set $a_1=q+\alpha$
for $2q=-p$;  then $\iota(\alpha)=-\alpha$.)   In addition, $Y_p$ meets 
the boundary at  $\{(-2p)+p+p \}.$
Thus $Y_p$ is isomorphic to the Kummer surface $K_1(A)$ blown up at one point,
i.e., the images of $\alpha=\pm 3q$.

The diagonal divisor meets $Y_p$ along the $16$ distinguished $(-2)$-curves and 
with multiplicity two along the $(-1)$-curve over the center, thus we have
$$e^2Y_p=\frac{1}{4}(-4+16\cdot(-2))=-9.$$
Evidently, $Y_p$ is disjoint from $W_{\tau}$ for $p \not \in A[3]$, thus
$$Y_p \cdot W_{\tau}=0.$$
Using the computations for $W_{\tau}$ done previously, we find
$$Y_p=\frac{1}{72}(3c_2(K_2(A))+e^2).$$
In particular, $Y_p^2=1$ and given $f,g \in \rH^2(A,\bC)$ we have
$$f\cdot g \cdot Y_p=2 f \cdot g.$$

Assume $p \not \in A[3]$ and write
$$Z_{\tau}=Y_p-W_{\tau}.$$
As a consequence, we deduce 
$$Z_{\tau}\cdot D_1 \cdot D_2=2\left(D_1,D_2\right)$$
for all $D_1,D_2 \in \rH^2(K_2(A),\bZ)$.
By the orthogonal decompositions (\ref{eqn:midcohom}) and (\ref{eqn:midcohom2}), we conclude
$Z_{\tau} \in {\bf 1}_{K_2(A)}^{81}$, i.e.,
they are canonical Hodge cycles.   
We summarize this analysis as follows:
\begin{prop} \label{prop:canonHodge}
Let $X$ be deformation equivalent to $K_2(A)$, for $A$ an abelian surface.  
Consider the lattice of canonical Hodge classes
$$({\bf 1}^{81}_X \cap \rH^4(X,\bZ)) \subset \rH^4(X,\bZ),$$
associated with the decomposition (\ref{eqn:midcohom2}).  The
classes $\{Z_{\tau} \}_{\tau \in A[3]}$ in this lattice span
$({\bf 1}^{81}_X \cap \rH^4(X,\bQ))$ and satisfy
\begin{equation} \label{eqn:Ztau}
Z_{\tau}^2=4, \quad Z_{\tau}\cdot Z_{\tau'}=1 \text{ if }\tau\neq \tau', \quad c_2(X)Z_{\tau}=28.
\end{equation}
\end{prop}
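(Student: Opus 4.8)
The plan is to read everything off the decomposition $Z_\tau = Y_p - W_\tau$ together with the intersection numbers already recorded, the only genuinely new input being $c_2(X)\cdot Y_p$; linear independence, and hence the spanning statement, will then follow from a rank computation for the resulting Gram matrix. Recall that the class $[Y_p]=\frac{1}{72}(3c_2(X)+e^2)$ is independent of $p$, so each $Z_\tau$ is a well-defined integral class, and it was shown above that $Z_\tau$ lies in ${\bf 1}^{81}_X$; thus I may take for granted that the $Z_\tau$ belong to the stated lattice of canonical Hodge classes and concentrate on the numerics and spanning.

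First I would expand the three pairings. Using $Y_p^2=1$, the disjointness $Y_p\cdot W_\tau=0$ (valid since $p\notin A[3]$), and $W_\tau^2=3$, I get $Z_\tau^2 = Y_p^2 - 2\,Y_p W_\tau + W_\tau^2 = 1+3 = 4$. For $\tau\neq\tau'$ the surfaces $W_\tau,W_{\tau'}$ are supported at distinct points of $A[3]$ and are therefore disjoint, so $W_\tau\cdot W_{\tau'}=0$ and $Z_\tau\cdot Z_{\tau'}=Y_p^2=1$. There remains $c_2(X)\cdot Z_\tau = c_2(X)Y_p - c_2(X)W_\tau = c_2(X)Y_p + 1$, so the whole statement hinges on establishing $c_2(X)\cdot Y_p = 27$.

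To compute $c_2(X)\cdot Y_p$ I would restrict $T_X$ to the smooth surface $Y_p$ and use the normal bundle sequence $0\ra T_{Y_p}\ra T_X|_{Y_p}\ra N_{Y_p/X}\ra 0$, which gives $c_2(X)Y_p = \chi_{\mathrm{top}}(Y_p) + c_1(T_{Y_p})c_1(N_{Y_p/X}) + c_2(N_{Y_p/X})$. Since $X$ is holomorphic symplectic, $c_1(T_X)=0$, whence $c_1(N_{Y_p/X}) = K_{Y_p}$, so the middle term equals $-K_{Y_p}^2$; the last term is the self-intersection $Y_p^2=1$. Because $Y_p$ is the Kummer K3 surface $K_1(A)$ blown up at one point, $\chi_{\mathrm{top}}(Y_p)=24+1=25$ and $K_{Y_p}^2=-1$, giving $c_2(X)Y_p = 25 - (-1) + 1 = 27$. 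As an independent check one may instead use $[Y_p]=\frac{1}{72}(3c_2(X)+e^2)$ with $c_2(X)^2=756$ and $c_2(X)e^2=-324$, values extracted from the Fujiki relations (the Fujiki constant $9$ gives $e^4=324$, and $We^2=243$, $Wc_2(X)=-81$ via $W=\frac38(c_2(X)+3e^2)$ then pin down $c_2(X)^2$, $c_2(X)e^2$); this again yields $\frac{1}{72}(3\cdot756-324)=27$. The main obstacle is exactly this step: it is tempting to treat $Y_p$ as Lagrangian and set $N_{Y_p/X}\cong\Omega^1_{Y_p}$, but $Y_p$ is \emph{not} Lagrangian (the symplectic form restricts to $2\,\omega_A$ on its tangent directions), so one must use $c_2(N_{Y_p/X})=Y_p^2=1$ rather than $c_2(\Omega^1_{Y_p})$, and must account correctly for the blown-up boundary point in both $\chi_{\mathrm{top}}$ and $K^2$.

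Finally, for spanning I would observe that all $81=|A[3]|$ classes $Z_\tau$ lie in the $81$-dimensional space ${\bf 1}^{81}_X\cap\rH^4(X,\bQ)$, so it suffices to prove linear independence. The Gram matrix of $\{Z_\tau\}$ under the cup product is $G = 3\,I_{81} + J_{81}$, where $J_{81}$ is the all-ones matrix, by the values $Z_\tau^2=4$ and $Z_\tau Z_{\tau'}=1$. Its eigenvalues are $3+81=84$ (multiplicity one, on the all-ones vector) and $3$ (multiplicity $80$), so $\det G = 84\cdot 3^{80}\neq 0$. Hence any relation $\sum_\tau c_\tau Z_\tau = 0$ forces $Gc=0$ and therefore $c=0$, so the $Z_\tau$ are linearly independent and span the $81$-dimensional rational space of canonical Hodge classes, as claimed. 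The intersection numbers, being topological, are deformation invariant, so the conclusions transfer from $K_2(A)$ to an arbitrary $X$ deformation equivalent to it.
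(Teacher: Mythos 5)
Your proposal is correct, and its skeleton---expanding $Z_{\tau}=Y_p-W_{\tau}$ against the numbers $Y_p^2=1$, $Y_p\cdot W_{\tau}=0$, $W_{\tau}^2=3$, $W_{\tau}\cdot W_{\tau'}=0$, $c_2(X)\cdot W_{\tau}=-1$, then deducing spanning from nondegeneracy of the Gram matrix $3I+J$ together with the dimension count $|A[3]|=81=\dim {\bf 1}^{81}_X$---is exactly how the paper arrives at Proposition~\ref{prop:canonHodge}, which is stated there as a summary of the Section~\ref{sect:detailed} computations (the eigenvalues $84$ and $3$ of your Gram matrix reappear in Proposition~\ref{prop:bigdisc}). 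The one point where you genuinely diverge is the key number $c_2(X)\cdot Y_p=27$: the paper reads it off the class formula $[Y_p]=\frac{1}{72}(3c_2(X)+e^2)$ together with $c_2(X)^2=756$ and $c_2(X)\cdot e^2=54\left(e,e\right)=-324$, values cited from Sawon and Britze--Nieper-Wisskirchen and only made explicit in later sections; you instead compute it intrinsically from the normal bundle sequence, Gauss--Bonnet, $c_1(T_X)=0$, and the self-intersection formula $\int_{Y_p}c_2(N_{Y_p/X})=Y_p^2$, using only $\chi_{\mathrm{top}}(Y_p)=25$, $K_{Y_p}^2=-1$, $Y_p^2=1$. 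That route is more self-contained---it bypasses the Fujiki constant $a_X=54$ and $c_2(X)^2=756$ entirely---and doubles as a consistency check on those cited values; your warning that $Y_p$ is \emph{not} Lagrangian, so that one must not substitute $N_{Y_p/X}\simeq\Omega^1_{Y_p}$, is also correct, since the symplectic form restricts to $2\omega_A$ on $Y_p$. Be aware only that the independence is partial rather than complete: your argument still inherits $Y_p^2=1$ (and the smoothness and isomorphism type of $Y_p$) from the paper's earlier analysis, and $Y_p^2=1$ is itself obtained there via the same class formula, so the ``independent check'' in your second method is really a consistency verification within the paper's framework rather than a second derivation from scratch.
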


This raises a question:  How do we interpret these geometrically?  
One approach is to analyze the limit $\lim_{p\ra \tau} Y_p$, to determine whether
it contains $W_{\tau}$ as an irreducible component, with residual intersection 
in the summand ${\bf 1}_X^{81}$.  

The approach we take is to analyze the action of $G_3$ on $K_2(A)$ and its deformations.
Regard $G_3 \subset \mathrm{Sp}(A[3]) \ltimes A[3]$,
the canonical semidirect product associated to the action of the symplectic group on
$A[3]$.  
\begin{theo}[Hodge conjecture for canonical classes] \label{theo:hodge}
Consider the action of $G_3$ on $K_2(A)$.  
For each $\tau \in A[3] \subset G_3$, let $\iota_{\tau}\in G_3$ denote the 
involution fixing $\tau$.
\begin{enumerate}
\item{The fixed point locus of $\iota_{\tau}$ has two
irreducible components.  First, there is an isolated point corresponding to the 
vertex of $W_{\tau}$.  Second, there is a Kummer surface that is an irreducible
component of $\lim_{p \ra \tau} Y_p$ and has class $Z_{\tau}$.  For instance, we have
$$Z_0=\overline{\{(a_1,a_2,a_3):a_1=0, a_2=-a_3, a_2 \neq 0 \} }$$
and the other $Z_{\tau}$ are translates of $Z_0$ via the action of $A[3]$.}
\item{For each deformation $X$ of $K_2(A)$, the $Z_{\tau}$ deforms to a submanifold
of $X$.}  
\end{enumerate}
\end{theo}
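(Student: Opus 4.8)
The plan is to prove this theorem in two stages, corresponding to the two parts of the statement. The main content is part (1): identifying the fixed-point locus of the involution $\iota_\tau$ and computing the class of its two-dimensional component. Part (2), the deformation statement, then follows from the equivariant-deformation machinery already established in Theorem~\ref{theo:action} and Proposition~\ref{prop:groupaction}.

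For part (1), I would begin by working out the fixed locus explicitly for $\tau=0$, where $\iota_0$ is the restriction to $K_2(A)$ of the sign involution $-1$ on the abelian surface $A$ (acting diagonally on $A^{[3]}$). A point of $K_2(A)$ supported at distinct smooth points $(a_1,a_2,a_3)$ with $a_1+a_2+a_3=0$ is fixed by $\iota_0$ precisely when the unordered triple $\{a_1,a_2,a_3\}$ is stable under $a\mapsto -a$; since the sum is $0$, the generic such configuration has one point at a $2$-torsion value and the other two swapped, $\{0,a,-a\}$ — but more relevantly, configurations of the form $\{0,a,-a\}$ with $a\neq 0$ give a two-parameter family, matching the stated formula $Z_0=\overline{\{(a_1,a_2,a_3):a_1=0,\ a_2=-a_3,\ a_2\neq 0\}}$. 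I would then verify that the closure of this locus is a smooth Kummer surface (parametrized by $a\mapsto\{0,a,-a\}$, which factors through $A/\pm1$ after resolving), and separately identify the isolated fixed point as the triple point supported at $0$, namely the vertex of $W_0\simeq\bP(1,1,3)$. The general $\tau$ case follows by translating via the $A[3]$-action, as the involutions $\iota_\tau$ are $A[3]$-conjugate to $\iota_0$.

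The key computation, and the step I expect to be the main obstacle, is showing that the class of this Kummer-surface component is exactly $Z_\tau=Y_p-W_\tau$. The natural approach is to study the degeneration $\lim_{p\to\tau}Y_p$: the surfaces $Y_p$ form a family of Kummer surfaces (blown up at a point), and as $p\to\tau$ the limit cycle should break into the component $W_\tau$ together with a residual Kummer surface. I would identify the residual component with the fixed surface just described and argue that the limit class $\lim_{p\to\tau}[Y_p]=[W_\tau]+[\,\text{fixed Kummer}\,]$, so that the fixed Kummer has class $Y_p-W_\tau=Z_\tau$ by the flatness/constancy of $[Y_p]$ in cohomology. The delicate point is controlling the degeneration precisely enough to rule out extra embedded or lower-dimensional contributions and to confirm the multiplicity of $W_\tau$ in the limit is exactly one; here I would cross-check against the intersection numbers in \eqref{eqn:Ztau}, verifying for instance that the fixed surface $S$ satisfies $S^2=4$ and $S\cdot c_2(X)=28$ directly from its identification as a Kummer surface inside $X$, which pins down the class unambiguously given the orthogonal decomposition of Proposition~\ref{prop:canonHodge}.

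For part (2), I would invoke Theorem~\ref{theo:action}: since $\iota_\tau\in\Aut^\circ(K_2(A))$ acts trivially on $\rH^2$, it extends to an automorphism of every deformation $X$ by the local-system structure of $\Aut^\circ(\cX/B)$. The surface $Z_\tau$, being a connected component of the fixed locus $\mathrm{Fix}(\iota_\tau)$, deforms along with $X$ because fixed loci of a family of automorphisms form a proper smooth family over the base (the fixed locus is smooth since $\iota_\tau$ has finite order, and the expected dimension is preserved under deformation). Thus $Z_\tau$ deforms to a submanifold of each $X$, remaining an algebraic — hence Hodge — class, which is consistent with $Z_\tau$ lying in the canonical summand ${\bf 1}^{81}_X$.
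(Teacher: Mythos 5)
Your proposal is correct and follows essentially the same route as the paper: identify $\iota_\tau$ as ($A[3]$-conjugate to) the sign involution, read off its fixed locus explicitly (the surface $\overline{\{(0,a,-a)\}}$ together with the vertex of $W_\tau$), and invoke Proposition~\ref{prop:groupaction} (via the local system of automorphisms from Theorem~\ref{theo:action}) to carry $\iota_\tau$, and hence its fixed locus, over to every deformation of $K_2(A)$. The only substantive difference is that you spell out the identification of the fixed surface's class with $Z_\tau = Y_p - W_\tau$ by analyzing the limit cycle $\lim_{p\to\tau} Y_p$ and cross-checking multiplicities against the intersection numbers of Proposition~\ref{prop:canonHodge}, a verification the paper's two-line proof leaves implicit.
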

\begin{proof}
Each $\iota_{\tau}$ is conjugate to the involution induced by the sign
involution on $A$;  its fixed points are clearly the stratum $Z_0$
and the vertex of the exceptional divisor $W_0$.  

Proposition~\ref{prop:groupaction}
shows each $\iota_{\tau}$ carries over to deformations of $X$;  thus the
fixed-point loci carry over as well.  
\end{proof}

\begin{defi}
Let $X$ be an irreducible holomorphic symplectic manifold, with its
natural hyperk\"ahler structure.  A submanifold $Z\subset X$ is {\em trianalytic}
if $Z$ is analytic with respect to each of the associated complex structures.
\end{defi}
These have been studied systematically by Verbitsky;  see, for example,
\cite{Verb95,Verb98}.  One general result is that analytic subvarieties
representing canonical Hodge classes are automatically trianalytic \cite[Thm.~4.1]{Verb95}.
In particular, {\em all} analytic subvarieties of (Mumford-Tate) general
irreducible holomorphic symplectic manifolds have this property.  
The deformations of the $Z_{\tau}$ in $X$ are thus examples of trianalytic submanifolds.  
We lack a clear picture of what these generic deformations look like.  
However, their middle cohomology should have a piece that is isogenous to $\rH^2(X)$.  
For more discussion of trianalytic subvarieties of generalized Kummer varieties,
see \cite[\S 6]{KV}.

\section{The lattice of canonical classes}
\begin{prop} \label{prop:bigdisc}
The lattice $\Pi=\left<Z_{\tau}:\tau \in A[3] \right>$ under the intersection
form is positive definite of discriminant
$2^2 \cdot 3^{81} \cdot 7$.
We have 
$$c_2(X)=\frac{1}{3} \sum_{\tau \in A[3]} Z_{\tau},
$$
which is non-divisible.  
\end{prop}
\begin{proof}
From Equation~(\ref{eqn:Ztau}), we see that the intersection form of $\Pi$ is
$$\begin{array}{c|cccc}
   & Z_{\tau_1} & Z_{\tau_2} & Z_{\tau_3} & \cdots \\
\hline
Z_{\tau_1} & 4 & 1 & 1& \cdots \\
Z_{\tau_2} & 1 & 4 & 1 &\cdots \\
Z_{\tau_3} & 1 & 1 & 4 & \ddots \\
\vdots     & \vdots & \vdots & \ddots & \ddots
\end{array}
$$
so the corresponding matrix has eigenvalues $84$ (with multiplicity one)
and $3$ (with multiplicity eighty).  

The non-divisibility follows from 
$$c_2(X)\cdot Z_j=28, \quad c_2(X)^2=756, \quad c_2(X)\cdot \bP^2=-3,$$
where $\bP^2 \subset X$ is a Lagrangian plane.  
\end{proof}

\begin{prop} \label{prop:monobig}
Consider the generalized Kummer varieties deformation equivalent to $K_2(A)$.  The image of the monodromy representation on
$$\Pi=\left<Z_{\tau}:\tau \in A[3] \right> \subset \rH^4(X,\bZ)$$
contains the semidirect product
$$\mathrm{Sp}(A[3]) \ltimes A[3],$$
where $\mathrm{Sp}(A[3])$ is the symplectic group.  These groups act on the $Z_{\tau}, \tau \in A[3]$ via permutation.
\end{prop}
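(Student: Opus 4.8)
The plan is to identify the monodromy group of generalized Kummer fourfolds concretely and exhibit the semidirect product $\mathrm{Sp}(A[3]) \ltimes A[3]$ as a subgroup acting on the lattice $\Pi$ by permuting the generators $Z_\tau$ according to the natural action on $A[3]$. The key observation is that the $Z_\tau$ arise as fixed-point loci of the involutions $\iota_\tau \in G_3$ from Theorem~\ref{theo:hodge}, and these loci — and hence their homology classes — are permuted by the ambient action of $G_3 = \bZ/2\bZ \ltimes A[3]$ on $K_2(A)$. So the first source of monodromy is the group $G_3$ itself, whose automorphisms act on $\rH^4(X,\bZ)$ preserving $\Pi$; by Proposition~\ref{prop:groupaction} this action persists on every deformation $X$.

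First I would make precise how $G_3$ and its normalizer act on the index set $A[3]$. The involutions $\iota_\tau$ are conjugate under translation by $A[3]$: translating by $\sigma \in A[3]$ sends $\iota_\tau$ to $\iota_{\tau+\sigma}$, so the translation subgroup $A[3]$ acts on the $Z_\tau$ by translation of the index $\tau \mapsto \tau + \sigma$. This gives the $A[3]$ factor of the claimed group directly from the $G_3$-action, which by Proposition~\ref{prop:groupaction} is realized by automorphisms on all deformations. The symplectic factor $\mathrm{Sp}(A[3])$ must come from a different mechanism, since $\mathrm{Sp}(A[3])$ does not act on a single fixed abelian surface $A$ preserving $\rH^2(A)$; rather, it is realized through monodromy. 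The strategy is to deform $A$ through a family of abelian surfaces inducing the full symplectic action on $A[3] \cong (\bZ/3\bZ)^4$ equipped with its Weil pairing, and to track the induced permutation of the $Z_\tau$ through the corresponding family of Kummer fourfolds.

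The concrete mechanism for producing $\mathrm{Sp}(A[3])$ is the monodromy of the family of abelian surfaces over the moduli space (or over a suitable cover parametrizing level-$3$ structures). The mapping class group of a genus-two abelian surface acts on $\rH^1(A,\bZ/3\bZ)$ through the full symplectic group $\mathrm{Sp}(4,\bZ/3\bZ) = \mathrm{Sp}(A[3])$, and this is classical. Each element is realized by a loop in the base of a family of abelian surfaces; the associated family of generalized Kummer fourfolds $K_2(\cA/B)$ then carries the $Z_\tau$ as a flat family of canonical Hodge classes (they deform by Theorem~\ref{theo:hodge}(2)), and monodromy around the loop permutes them exactly as the symplectic element permutes $A[3]$. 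Since $A[3]$-translation and $\mathrm{Sp}(A[3])$ combine into the semidirect product $\mathrm{Sp}(A[3]) \ltimes A[3]$ inside $\Aut(\rH^4(X,\bZ))$, and all of these transformations preserve $\Pi$ and act as permutations of its generators, the image of the monodromy representation on $\Pi$ contains this group.

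I expect the main obstacle to be verifying that the symplectic monodromy of the abelian surfaces genuinely lifts to honest monodromy of the family of Kummer fourfolds and acts on $\Pi$ by the \emph{stated} permutation of the $Z_\tau$, rather than merely permuting them up to some twist or sign. This requires care because the correspondence between elements of $A[3]$ and the classes $Z_\tau$ is mediated by the geometric description in Theorem~\ref{theo:hodge}, and one must check that the symplectic action on $A[3]$ commutes appropriately with passage to $K_2(A)$ and with the identification $Z_\tau = \lim_{p \to \tau} Y_p$ minus $W_\tau$. The cleanest way to close this gap is to note that the whole construction — the $\iota_\tau$, their fixed loci, and the classes $Z_\tau$ — is natural in $A$ and equivariant for the symplectic automorphisms of the pair $(A[3], \text{Weil pairing})$; naturality then forces the monodromy to realize precisely the permutation action of $\mathrm{Sp}(A[3]) \ltimes A[3]$ on the indexing set.
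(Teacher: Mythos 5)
Your proposal is correct and follows essentially the same route as the paper's own proof: the $\mathrm{Sp}(A[3])$ factor comes from the monodromy of families of abelian surfaces acting on three-torsion, and the $A[3]$ factor comes from the translation automorphisms of $K_2(A)$ (which persist on deformations), both acting on $\Pi$ by permuting the $Z_{\tau}$. The paper states this in two sentences; your additional discussion of equivariance and naturality only elaborates the same argument.
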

\begin{proof}
The monodromy representation for abelian surfaces acts on their three-torsion via
the symplectic group $\mathrm{Sp}(A[3])$;  this group acts on $\Pi$
as well via the permutation representation on the $Z_{\tau}$.  In addition, translation by 
a three-torsion element $\tau' \in A[3]$ induces a non-trivial action on $K_2(A)$;  the
action on $\Pi$ corresponds to the permutation
$$Z_{\tau} \mapsto Z_{\tau+\tau'}.$$
\end{proof}

\section{Comparison between Hilbert schemes and generalized Kummer varieties}
The middle cohomology of the Hilbert scheme $Y$ of length-two subschemes
of a K3 surface is generated by the second cohomology.  Thus the class
of a Lagrangian plane in $Y$ can be written as a quadratic polynomial
in $\rH^2(Y)$.  After suitable deformation, it can be written
as a linear combination of $c_2(Y)$ and $\lambda^2$, where $\lambda$
is proportional to the class of a line.  This {\em Ansatz}
allowed us to compute $\left(\lambda,\lambda\right)$ in this
case \cite[\S 4]{HTGAFA08}.  

In our situation, the presence of $80$ additional classes complicates
the algebra.   The following proposition shows these must be included
in any formula for the class of a Lagrangian plane:

\begin{prop}
Let $X$ be deformation equivalent to $K_2(A)$ and $P \subset X$ a Lagrangian
plane.  Let $\lambda \in \rH^2(X,\bZ)$ be $6[\ell]$ where 
$\ell \subset P$ is a line.  Then we cannot write
$$[P]=a\lambda^2 + b c_2(X)$$
for any rational $a,b \in \bQ$.  
\end{prop}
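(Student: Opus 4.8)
The plan is to obtain a contradiction by pairing the hypothetical identity with the canonical Hodge classes $Z_\tau$ and combining integrality of cup products with the non-divisibility relation $c_2(X)=\frac{1}{3}\sum_\tau Z_\tau$ of Proposition~\ref{prop:bigdisc}.

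First I would assume, toward a contradiction, that $[P]=a\lambda^2+b\,c_2(X)$ with $a,b\in\bQ$, and intersect both sides with each of the $81$ classes $Z_\tau$, $\tau\in A[3]$. On the deformation $X$ carrying $P$ these $Z_\tau$ still deform to honest submanifolds by Theorem~\ref{theo:hodge}, so they define classes in $\rH^4(X,\bZ)$. Taking $D_1=D_2=\lambda$ in the relation $Z_\tau\cdot D_1\cdot D_2=2\left(D_1,D_2\right)$ gives $\lambda^2\cdot Z_\tau=2\left(\lambda,\lambda\right)$, while Proposition~\ref{prop:canonHodge} gives $c_2(X)\cdot Z_\tau=28$; crucially, both numbers are independent of $\tau$. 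Hence under the hypothesis
$$[P]\cdot Z_\tau=2a\left(\lambda,\lambda\right)+28\,b=:m$$
is a single rational number $m$, the same for every $\tau$. Note that this step does not require knowing the value of $\left(\lambda,\lambda\right)$, so the argument is independent of Theorem~\ref{theo:main}.

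Next I would invoke integrality together with the summation relation. Since $[P]$ and each $Z_\tau$ lie in $\rH^4(X,\bZ)$, the cup product $[P]\cdot Z_\tau$ lies in $\rH^8(X,\bZ)\cong\bZ$, so $m\in\bZ$. On the other hand, summing over the $81$ elements $\tau\in A[3]$ and using $\sum_\tau Z_\tau=3\,c_2(X)$ together with $[P]\cdot c_2(X)=-3$ (Proposition~\ref{prop:bigdisc}) yields
$$81\,m=\sum_{\tau\in A[3]}[P]\cdot Z_\tau=3\,[P]\cdot c_2(X)=-9.$$
Thus $m=-1/9$, contradicting $m\in\bZ$. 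Conceptually this says exactly that $[P]$ has a nonzero component in the summand ${\bf 1}^{80}_X$ of (\ref{eqn:midcohom}), whereas both $\lambda^2$ and $c_2(X)$ lie in $\Sym^2(\rH^2(X))$, so $[P]$ cannot be their combination.

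The step requiring the most care is not the final arithmetic but the verification that all the ingredients genuinely transport to the special deformation $X$ on which $P$ lives: that the $Z_\tau$ persist as submanifolds (hence as integral classes) there, that the cubic relation $Z_\tau\cdot D_1\cdot D_2=2\left(D_1,D_2\right)$, the values $c_2(X)\cdot Z_\tau=28$, and the identity $c_2(X)=\frac{1}{3}\sum_\tau Z_\tau$ remain valid, and that $[P]\cdot c_2(X)=-3$. Each of these is a purely cohomological, deformation-invariant statement, so it follows from Theorem~\ref{theo:hodge}, Proposition~\ref{prop:canonHodge}, and Proposition~\ref{prop:bigdisc}; once these are in place the divisibility obstruction $81\nmid 9$ does the rest.
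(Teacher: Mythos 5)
Your proof is correct, and it takes a genuinely different route from the paper's. The paper stays entirely inside the subring generated by $\rH^2(X)$ and $c_2(X)$: it pairs the hypothetical identity $[P]=a\lambda^2+bc_2(X)$ against the three constraints $[P]^2=3$, $c_2(X)\cdot[P]=-3$, and $\lambda^2\cdot[P]=\left(\lambda,\lambda\right)^2/36$, using the relations $L^4=9\left(L,L\right)^2$, $L^2c_2(X)=54\left(L,L\right)$, $c_2(X)^2=756$, and then verifies that the resulting system in $a$, $b$, $\left(\lambda,\lambda\right)$ has no rational solutions (eliminating $a,b$ leaves a quadratic in $\left(\lambda,\lambda\right)$ with non-square discriminant). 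Your argument instead exploits the tautological classes: under the hypothesis, $[P]\cdot Z_{\tau}=2a\left(\lambda,\lambda\right)+28b$ is a single number $m$ independent of $\tau$, necessarily an integer since $[P]$ and $Z_{\tau}$ are integral classes, while summing over $A[3]$ and using $c_2(X)=\frac{1}{3}\sum_{\tau}Z_{\tau}$ together with $c_2(X)\cdot[P]=-3$ forces $81m=-9$, i.e.\ $m=-1/9$, a contradiction. The ingredients you invoke (Theorem~\ref{theo:hodge}, Proposition~\ref{prop:canonHodge}, Proposition~\ref{prop:bigdisc}, the relation $Z_{\tau}\cdot D_1\cdot D_2=2\left(D_1,D_2\right)$, and their deformation invariance) are all established in the paper before this proposition, so your use of them is legitimate, and your care about transporting them to the particular deformation carrying $P$ is exactly the right point to check. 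As for what each approach buys: the paper's proof needs nothing about the classes $Z_{\tau}$, but it requires the quadratic constraint $[P]^2=3$ and a genuine Diophantine elimination; yours needs the heavier geometric package of the $Z_{\tau}$ but is purely linear in $a,b$, never uses $[P]^2=3$, the value of $\left(\lambda,\lambda\right)$, or the Fujiki constants, and it makes the moral of the section transparent: the hypothesis would force $[P]$ to meet all $81$ classes $Z_{\tau}$ equally, which integrality forbids, so any formula for $[P]$ must involve the extra canonical classes non-symmetrically.
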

\begin{proof}
Recall that \cite[p. 123]{SawonThesis}
$$c_2(X)^2=756$$
and \cite[\S 5]{BritzeNieper},\cite[Thm. 6]{Britze}
$$\chi(\cO_X(L))=3 \binom{\frac{\left(L,L\right)}{2}+2}{2}=L^4/4!+L^2c_2(X)/24+\chi(\cO_X).$$
Thus we find
$$L^4=9\left(L,L\right)^2, L^2c_2(X)=54\left(L,L\right).$$

The condition $[P]^2=3$ translates into
$$3=9a^2 \left(\lambda,\lambda\right)^2+ 108ab\left(\lambda,\lambda\right) + 756 b^2.$$
The condition $c_2(X)|\bP^2=-3$ yields
$$-3=54a\left(\lambda,\lambda\right) + 756 b.$$
Finally, the fact that $\lambda|\bP^2=\left(\lambda,\ell\right)\ell$ implies
$$\left(\lambda,\lambda\right)^2/36=9a\left(\lambda,\lambda\right)^2 + 54b\left(\lambda,\lambda\right).$$
These equations admit no solutions for $a,b,\left(\lambda,\lambda\right) \in \bQ$.  
\end{proof}

\section{A key special case}
\label{sect:key}

In order to formulate a revised {\em Ansatz}, we analyze a specific example:
Let $A=E_1 \times E_2$ and consider the planes $P \subset K_2(A)$ associated with the
linear series $3\cdot (0)$ in $E_1\times 0$.
Let $\Lambda'=E_1[3] \times 0 \subset A[3]$,
$\ell$ the class of a line in $P$, and
$$\lambda=6\ell=6E_1-3e$$
the corresponding class in $\rH^2(K_2(A),\bZ)$.  
\begin{prop} \label{prop:formula}
$$[P]=\frac{1}{216}\lambda^2 + \frac{1}{8} c_2(K_2(A))-\frac{1}{3} \sum_{\tau \in \Lambda'}Z_{\tau}$$
\end{prop}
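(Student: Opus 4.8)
The plan is to verify the asserted identity in $\rH^4(X,\bQ)$ by pairing both sides against a spanning set under the nondegenerate cup-product pairing on middle cohomology. By Propositions~\ref{prop:decomp} and~\ref{prop:canonHodge}, the $28$-dimensional space $\Sym^2(\rH^2(X))$, together with the canonical classes $\{Z_\tau\}_{\tau\in A[3]}$ spanning the summand ${\bf 1}^{81}_X$, generate all of $\rH^4(X,\bQ)$: these overlap only in the line spanned by the dual Beauville--Bogomolov form, so their dimensions add to $28+81-1=108=\beta_4$. Hence it suffices to check that $[P]$ and the right-hand side $Q$ have equal cup product with every $D_1 D_2$, $D_1,D_2\in\rH^2(X)$, and with every $Z_\tau$.

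For the products with $D_1D_2$, restriction to the plane gives $D|_P=(\ell\cdot D)\,H=\tfrac16(\lambda,D)\,H$, whence $[P]\cdot D_1\cdot D_2=\tfrac1{36}(\lambda,D_1)(\lambda,D_2)$. On the other side I would use the Fujiki relation, whose constant is pinned down by $L^4=9(L,L)^2$, in the polarized form $\lambda^2\cdot D_1\cdot D_2=-162(D_1,D_2)+6(\lambda,D_1)(\lambda,D_2)$, after substituting $(\lambda,\lambda)=(6E_1-3e,6E_1-3e)=-54$; together with $c_2(X)\cdot D_1\cdot D_2=54(D_1,D_2)$, coming from $L^2c_2(X)=54(L,L)$, and $Z_\tau\cdot D_1\cdot D_2=2(D_1,D_2)$. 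Summing the three contributions of $Q$, the pure $(D_1,D_2)$ terms cancel, since $-\tfrac34+\tfrac{27}4-6=0$, and one is left with exactly $\tfrac1{36}(\lambda,D_1)(\lambda,D_2)$. Thus $[P]-Q$ is orthogonal to $\Sym^2(\rH^2(X))$.

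It remains to match the products with the $Z_\tau$. Using $\lambda^2\cdot Z_\tau=2(\lambda,\lambda)=-108$, the value $c_2(X)\cdot Z_\tau=28$, and the Gram matrix of \eqref{eqn:Ztau}, a direct computation gives $Q\cdot Z_\tau=-1$ for $\tau\in\Lambda'$ and $Q\cdot Z_\tau=0$ for $\tau\notin\Lambda'$. I would establish the same two values for $[P]\cdot Z_\tau$ geometrically. When $\tau\notin\Lambda'=E_1[3]\times 0$ the point $\tau$ does not lie on $E_1\times 0$, while every subscheme parametrized by $Z_\tau$ contains $\tau$ in its support and every subscheme parametrized by $P$ is supported on $E_1\times 0$; hence $P$ and $Z_\tau$ are disjoint and $[P]\cdot Z_\tau=0$. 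When $\tau\in\Lambda'$, translation by elements of $\Lambda'$ gives automorphisms of $X$ (Proposition~\ref{prop:groupaction}) that fix $[P]$ and permute the nine classes $\{Z_\tau\}_{\tau\in\Lambda'}$ transitively, so all nine numbers $[P]\cdot Z_\tau$ coincide, with a common value $m$. Then $c_2(X)=\tfrac13\sum_{\tau\in A[3]}Z_\tau$ and the disjointness just proved yield $c_2(X)\cdot[P]=\tfrac13\cdot 9m=3m$; since $c_2(X)\cdot[P]=-3$, by the one-line Chern class computation $c(T_X|_P)=(1+H)^3(1-H)^3=1-3H^2$ valid for any Lagrangian plane, we conclude $m=-1$.

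The main obstacle is precisely the computation of $[P]\cdot Z_\tau$ for $\tau\in\Lambda'$: here $P$ and the Kummer surface $Z_\tau$ are not disjoint but meet along a rational curve $\{\{\tau,\tau+a,\tau-a\}:a\in E_1\}$, so a naive count fails and one would otherwise face an excess-intersection calculation in the sense of \cite[ch.~9]{Fulton}. The device above sidesteps this entirely, extracting the single unknown $m$ from the global relation $c_2(X)=\tfrac13\sum_\tau Z_\tau$ and the known value $c_2(X)\cdot[P]=-3$. The remaining work is bookkeeping: confirming the Fujiki and intersection constants, and checking that the disjointness for $\tau\notin\Lambda'$ genuinely persists over the non-reduced and blown-up loci of $Z_\tau$, where $\tau$ remains in the support of every parametrized subscheme.
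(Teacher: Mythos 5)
Your proposal is correct, but it is organized quite differently from the paper's proof, and the comparison is instructive. The paper does not verify a given formula against a spanning set; it posits the Ansatz $[P]=a\lambda^2+bc_2(X)+\widehat{Z}$ with $\widehat{Z}\in c_2(X)^{\perp}\cap\left<Z_{\tau}:\tau\in A[3]\right>$, solves for $a,b$ from the two linear conditions $c_2(X)\cdot[P]=-3$ and $\lambda^2\cdot[P]=\left(\lambda,\lambda\right)^2/36$ (with $\left(\lambda,\lambda\right)=-54$), and then pins down $\widehat{Z}$ by its intersections with the $Z_{\tau}$ and a symmetry argument. For the crucial numbers $[P]\cdot Z_{\tau}$, the paper routes through the auxiliary surfaces of Section~\ref{sect:detailed}: it computes $[P]\cdot e^2=9$, deduces $P\cdot W=9$ and $P\cdot Y_p=0$ from $W=\frac{3}{8}(c_2(X)+3e^2)$ and $Y_p=\frac{1}{72}(3c_2(X)+e^2)$, gets $P\cdot W_{\tau}=1$ on $\Lambda'$ from disjointness off $\Lambda'$ plus symmetry, and then uses $Z_{\tau}=Y_p-W_{\tau}$. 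You instead (i) prove the identity by pairing against $\Sym(\rH^2(X))$ in degree two and against the $Z_{\tau}$, which by Propositions~\ref{prop:decomp} and~\ref{prop:canonHodge} span $\rH^4(X,\bQ)$, invoking nondegeneracy of the middle cup product; and (ii) extract $[P]\cdot Z_{\tau}=-1$ on $\Lambda'$ from the translation symmetry combined with $c_2(X)=\frac{1}{3}\sum_{\tau}Z_{\tau}$ (Proposition~\ref{prop:bigdisc}) and $c_2(X)\cdot[P]=-3$, rather than through $W$ and $Y_p$. Your route buys two things: it is logically self-contained, since it never needs the Ansatz as a hypothesis (in the paper this is an unproved assumption about the component of $[P]$ in $\Sym^2(\rH^2(X))$, though it can be justified by exactly your pairing computation against products $D_1D_2$); and it avoids the $e^2$, $W$, $Y_p$ bookkeeping. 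What the paper's route buys is a derivation rather than a verification: it shows how the formula is found without knowing it in advance, it produces intermediate geometric facts ($P\cdot W_{\tau}=1$, $P\cdot Y_p=0$) of independent interest, and it sets up the Ansatz format~(\ref{eqn:Ansatz2}) that is reused for arbitrary Lagrangian planes in the later sections, where no candidate formula is available to verify. Your verification-by-duality scheme would not transfer directly to that general setting, but as a proof of Proposition~\ref{prop:formula} itself it is complete and arguably tighter.
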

\begin{proof}
We start with the Ansatz
$$[P]=a\lambda^2 + b c_2(X) + \widehat{Z}$$
where $\widehat{Z} \in c_2(X)^{\perp} \cap \left<Z_{\tau}: \tau \in A[3]\right>.$

Recall that \cite[p. 123]{SawonThesis}
$$c_2(X)^2=756$$
and \cite[\S 5]{BritzeNieper},\cite[Thm. 6]{Britze}
$$\chi(\cO_X(L))=3 \binom{\frac{\left(L,L\right)}{2}+2}{2}=L^4/4!+L^2c_2(X)/24+\chi(\cO_X).$$
Thus we find
$$L^4=9\left(L,L\right)^2, \quad L^2c_2(X)=54\left(L,L\right).$$
Furthermore, we have
$$L_1L_2L_3L_4=3 \left[ \left(L_1,L_2\right)\left(L_3,L_4\right) + \left(L_1,L_3\right)\left(L_2,L_4\right)
+ \left(L_1, L_4\right) \left(L_2,L_3\right) \right].$$

The condition $[P]^2=3$ translates into
$$3=9a^2 \left(\lambda,\lambda\right)^2+ 108ab\left(\lambda,\lambda\right) + 756 b^2+\widehat{Z}\cdot \widehat{Z}.$$
The condition $c_2(X)|\bP^2=-3$ yields
$$-3=54a\left(\lambda,\lambda\right) + 756 b.$$
Finally, the fact that $\lambda|\bP^2=\left(\lambda,\ell\right)\ell$ implies
$$\left(\lambda,\lambda\right)^2/36=9a\left(\lambda,\lambda\right)^2 + 54b\left(\lambda,\lambda\right).$$
Given that $\left(\lambda,\lambda\right)=-54$, the last two equations allow us to solve for
$a$ and $b$, i.e., we find $a=1/216$ and $b=1/72$.  Thus we have
$$[P]=\frac{1}{216}\lambda^2 + \frac{1}{72} c_2(K_2(A))+\widehat{Z}.$$
It follows that
$$\begin{array}{rcl}
[P]\cdot e^2&=&\frac{1}{216}\lambda^2e^2 + \frac{54}{72} \left(e,e\right) \\
&=&\frac{3}{216}(\left(\lambda,\lambda\right) \left( e,e \right) + 2 \left(\lambda,e\right)^2)-9/2\\
&=&9.
\end{array}
$$

We apply the formulas from Section~\ref{sect:detailed}:  We know that 
$$P\cdot [W]=P \cdot \frac{3}{8}(c_2(X)+3e^2)=-9/8+81/8=9$$
and 
$$P\cdot [Y_p]=P\cdot \frac{1}{72}(3c_2(X)+e^2)=0.$$
The geometry of $P$ shows it is disjoint from $W_{\tau}$ for $\tau \not \in \Lambda'$.
By symmetry, we conclude that
$$P\cdot [W_{\tau}]=1, \quad \tau \in \Lambda'.$$
It follows that
$$P\cdot Z_{\tau}=\begin{cases} 0 \text{ if } \tau \not \in \Lambda' \\
				-1 \text{ if } \tau \in \Lambda',
			\end{cases}
$$
which reflects the fact that $P$ fails to intersect these trianalytic varieties transversally.

Thus for $\tau \in \Lambda'$ we have
$$\begin{array}{rcl}
\widehat{Z} \cdot Z_{\tau}&=&\left([P]-\frac{1}{216}\lambda^2-\frac{1}{72}c_2(X)\right)\cdot Z_{\tau} \\
			  &=&-1-\frac{1}{216} 2\left(\lambda,\lambda\right) - \frac{1}{72}28 \\
			  &=& -8/9.
\end{array}
$$

Since the $\tau \in \Lambda'$ appear symmetrically in $\widehat{Z}$, we conclude that
$$\widehat{Z}=\alpha \left( \frac{1}{3}c_2(X)-\sum_{\tau \in \Lambda'} Z_{\tau} \right).$$
We have seen that
$$
Z_{\tau}Z_{\tau'} = \begin{cases} 4 & \text{ if } \tau=\tau' \\
					1 & \text{ if } \tau\neq \tau',
		\end{cases}
$$
thus $\alpha=1/3$ and we conclude
$$[P]=\frac{1}{216}\lambda^2 + \frac{1}{72}c_2(X)+\frac{1}{3}(c_2(X)/3-\sum_{\tau \in \Lambda'} Z_{\tau}),$$
which yields the desired formula.
\end{proof}

\section{Integrality}
\label{sect:integrality}
%We record a statement we {\bf may} need later:  
The example in Section~\ref{sect:key} allows us to obtain further integrality results:
\begin{prop}
Let $\Lambda' \subset A[3]$ be a translate of a non-isotropic two-dimensional subspace.
Then 
$$\frac{1}{8}c_2(X)-\frac{1}{3}\sum_{\tau \in \Lambda'} Z_{\tau}$$
intersects each class of $\Pi$ integrally.  
\end{prop}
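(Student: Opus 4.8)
The plan is to prove the statement by a direct computation in $\Pi$, reducing it to the pairing against the generators $Z_\sigma$ and then invoking the two structural facts already available: the intersection numbers $Z_\tau^2=4$, $Z_\tau\cdot Z_{\tau'}=1$ for $\tau\neq\tau'$, and $c_2(X)\cdot Z_\tau=28$ from (\ref{eqn:Ztau}), together with the relation $c_2(X)=\frac13\sum_{\tau\in A[3]}Z_\tau$ of Proposition~\ref{prop:bigdisc}. Since $\Pi=\langle Z_\tau:\tau\in A[3]\rangle$ is generated over $\bZ$ by the $Z_\sigma$, it suffices to check that
$$\Big(\tfrac18 c_2(X)-\tfrac13\sum_{\tau\in\Lambda'}Z_\tau\Big)\cdot Z_\sigma\in\bZ\qquad\text{for every }\sigma\in A[3].$$

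First I would substitute the relation for $c_2(X)$ so as to express the class purely in terms of the $Z_\tau$, namely
$$\tfrac18 c_2(X)-\tfrac13\sum_{\tau\in\Lambda'}Z_\tau=\tfrac1{24}\sum_{\tau\notin\Lambda'}Z_\tau-\tfrac7{24}\sum_{\tau\in\Lambda'}Z_\tau.$$
Pairing against $Z_\sigma$ and inserting the Gram data then separates into the cases $\sigma\in\Lambda'$ and $\sigma\notin\Lambda'$; since $\Lambda'$ is a translate of a two-dimensional $\bF_3$-subspace it has nine elements, and in each case the pairing is a fixed rational number determined by whether $\sigma$ lies in $\Lambda'$ (so that one term contributes the self-pairing $4$) or not (so that all cross terms contribute $1$). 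The integrality assertion is thereby reduced to a divisibility statement for these two counts.

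The delicate point, which I expect to be the real obstacle, is pinning down exactly where the \emph{non-isotropy} of $\Lambda'$ enters. In the Lagrangian case of Section~\ref{sect:key} (there $\Lambda'=E_1[3]\times 0$) the same combination is only half-integral against the $Z_\sigma$, the missing half-integer being absorbed by the summand $\tfrac1{216}\lambda^2$ of the formula in Proposition~\ref{prop:formula}; indeed $\tfrac1{216}\lambda^2\cdot Z_\sigma=\tfrac1{108}(\lambda,\lambda)$, a half-integer when $(\lambda,\lambda)=-54$. The present statement asserts that for a non-isotropic plane this correction is unnecessary, so I would first verify the divisibility for one representative non-isotropic plane and then spread the conclusion over the whole orbit of such planes using the transitivity of the monodromy action of $\mathrm{Sp}(A[3])\ltimes A[3]$ on this orbit (Proposition~\ref{prop:monobig}), which permutes the $Z_\tau$ and hence fixes both $\Pi$ and the class under consideration. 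The crux will be confirming that the non-degenerate symplectic type of $\Lambda'$ genuinely alters the relevant count relative to the Lagrangian case, and I would want to cross-check this against the integrality of an explicit integral cycle attached to the non-isotropic configuration.
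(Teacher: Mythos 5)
Your reduction to checking the pairing against each generator $Z_\sigma$ is the right first move, but you stopped exactly where the argument is decided, and completing your own computation \emph{refutes} the statement as printed rather than proving it. With the Gram data of (\ref{eqn:Ztau}) and $\#\Lambda'=9$ one finds
$$\Bigl(\tfrac18 c_2(X)-\tfrac13\sum_{\tau\in\Lambda'}Z_\tau\Bigr)\cdot Z_\sigma
=\tfrac{28}{8}-\tfrac{12}{3}=-\tfrac12 \quad (\sigma\in\Lambda'),
\qquad
\tfrac{28}{8}-\tfrac{9}{3}=\tfrac12 \quad (\sigma\notin\Lambda'),$$
so this class is half-integral against every $Z_\sigma$, for \emph{every} nine-element subset $\Lambda'$. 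Your hope that non-isotropy ``genuinely alters the relevant count'' cannot be realized: the Gram computation sees only $\#\Lambda'=9$ and whether $\sigma\in\Lambda'$, never the symplectic type of $\Lambda'$. The underlying confusion is your dichotomy between ``the Lagrangian case of Section~\ref{sect:key}'' and ``a representative non-isotropic plane'': these are the \emph{same} case. ``Lagrangian'' there refers to the plane $P\subset X$, while $\Lambda'=E_1[3]\times 0$ is non-isotropic for the Weil pairing on $A[3]$. So the half-integrality you yourself observed in that example already contradicts the proposition as literally stated; there is no second case left to check. Non-isotropy enters only through Proposition~\ref{prop:monobig}, i.e., through the monodromy transitivity used in the spreading step, which you and the paper handle identically.

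What is actually true --- and what the paper's one-line proof via Proposition~\ref{prop:formula} establishes, modulo a misprinted coefficient --- is that the \emph{orthogonal} projection of $[P]$ onto $\Pi\otimes\bQ$ pairs integrally with $\Pi$, and that projection is not $\frac18 c_2(X)-\frac13\sum_{\tau\in\Lambda'}Z_\tau$ but
$$\sum_{\tau\in\Lambda'}Z'_\tau=\tfrac{3}{28}c_2(X)-\tfrac13\sum_{\tau\in\Lambda'}Z_\tau .$$
Indeed, by Proposition~\ref{prop:formula},
$$[P]-\Bigl(\tfrac{3}{28}c_2(X)-\tfrac13\sum_{\tau\in\Lambda'}Z_\tau\Bigr)
=\tfrac{1}{216}\lambda^2+\tfrac{1}{56}c_2(X)
=\tfrac{1}{1512}\bigl(7\lambda^2+27\,c_2(X)\bigr),$$
and $\bigl(7\lambda^2+27\,c_2(X)\bigr)\cdot Z_\sigma=14(\lambda,\lambda)+27\cdot 28=0$ because $(\lambda,\lambda)=-54$; this is exactly the class $\mu$ introduced later in the paper, which is orthogonal to all of $\Pi$. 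Hence for every $v\in\Pi$ one gets $\bigl(\sum_{\tau\in\Lambda'}Z'_\tau\bigr)\cdot v=[P]\cdot v\in\bZ$, since $[P]$ and $v$ are integral classes; a direct check confirms the pairings are $-1$ for $\sigma\in\Lambda'$ and $0$ for $\sigma\notin\Lambda'$. Monodromy transitivity on translates of non-isotropic planes then extends this from the example of Section~\ref{sect:key} to all of them, exactly as you proposed. So your overall architecture (one representative plus Proposition~\ref{prop:monobig}) coincides with the paper's, but the verification you deferred is the entire content of the problem, and carrying it out shows the coefficient in the statement must be corrected to $\frac{3}{28}$ (equivalently, the class must be $\sum_{\tau\in\Lambda'}Z'_\tau$) before any proof can go through.
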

This statement follows from the computation in Proposition~\ref{prop:formula} and Proposition~\ref{prop:monobig},
which implies that the monodromy acts transitively on translates of non-isotropic subspaces.
These are precisely the projections of planes described above (and their orbits under
the monodromy action of $\mathrm{Sp}(A[3])$ and translation by $A[3]$) into $\Pi$.  

\begin{prop}
Let $\Pi'$ denote the dual of the lattice $\Pi$, with the $\bQ$-valued intersection
form induced by $\Pi' \subset \Pi \otimes \bQ$.  Then $\Pi'$ is generated by
\begin{itemize}
\item{$\frac{1}{28}c_2(X)=\frac{1}{84}\sum_{\tau} Z_{\tau}$;}
\item{$\frac{1}{3} \sum_{\tau} \beta_{\tau} Z_{\tau},$ where $\beta_{\tau} \in \bZ$ and
satisfy $\sum_{\tau} \beta_{\tau}\equiv 0 \pmod{3}$.}
\end{itemize}
\end{prop}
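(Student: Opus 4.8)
The plan is to reduce everything to linear algebra over $\bZ$ in the basis $\{Z_\tau\}_{\tau\in A[3]}$, whose Gram matrix, by~(\ref{eqn:Ztau}), is $G=3I+J$ with $J$ the $81\times 81$ all-ones matrix. First I would record $\Pi'$ directly: a class $v=\sum_\tau a_\tau Z_\tau\in\Pi\otimes\bQ$ lies in $\Pi'$ precisely when $n_\sigma:=v\cdot Z_\sigma=3a_\sigma+\sum_\tau a_\tau\in\bZ$ for every $\sigma$. Inverting $G$—using $J^2=81J$ one checks $G^{-1}=\tfrac13 I-\tfrac1{252}J$—produces the dual basis
$$Z_\sigma^{*}=\tfrac13 Z_\sigma-\tfrac1{252}\sum_{\tau}Z_\tau, \qquad Z_\sigma^{*}\cdot Z_\tau=\delta_{\sigma\tau},$$
so that $\Pi'=\bigoplus_\sigma \bZ\,Z_\sigma^{*}$, and $\Pi'$ is parametrized by the integer tuples $(n_\sigma)$ through $a_\sigma=\tfrac13\bigl(n_\sigma-\tfrac1{84}\sum_\tau n_\tau\bigr)$.

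Next I would check that the two proposed families lie in $\Pi'$, which is immediate from the pairing formula. For $\tfrac1{84}\sum_\tau Z_\tau=\tfrac1{28}c_2(X)$ (using $c_2(X)=\tfrac13\sum_\tau Z_\tau$ from Proposition~\ref{prop:bigdisc}) one gets $n_\sigma=1$ for all $\sigma$, an integer tuple. For $\tfrac13\sum_\tau\beta_\tau Z_\tau$ one computes $n_\sigma=\beta_\sigma+\tfrac13\sum_\tau\beta_\tau$, which is integral exactly when $\sum_\tau\beta_\tau\equiv0\pmod3$; this is precisely why that congruence is imposed. In particular each $Z_\tau$ itself (take $\beta_\tau=3$ and the rest $0$) and each difference $\tfrac13(Z_\tau-Z_{\tau'})$ (take $\beta_\tau=1,\beta_{\tau'}=-1$) belongs to the second family, so $\Pi\subset\langle\text{generators}\rangle\subset\Pi'$.

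The substantive step is to show these classes generate all of $\Pi'$, and this is where I expect the real work. The clean formulation is to pass to the finite quotient $\Pi'/\Pi\cong\mathrm{coker}(G)$ and show the listed classes surject onto it; since every $Z_\tau\in\Pi$ already lies in the span, this is equivalent to exhibiting each $Z_\sigma^{*}$ as an integral combination of the generators. Splitting $\bQ^{81}=\bQ\,\mathbf{1}\oplus\mathbf{1}^{\perp}$ diagonalizes $G$ (eigenvalue $84$ on $\mathbf{1}$, eigenvalue $3$ on the sum-zero space), matching the eigenvalue computation in Proposition~\ref{prop:bigdisc}; correspondingly $\Pi'/\Pi$ has a cyclic part along $\mathbf{1}$ and an exponent-$3$ part from the sum-zero directions. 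The role of $\tfrac1{84}\sum_\tau Z_\tau$ is to generate the $\mathbf{1}$-part, while the differences $\tfrac13(Z_\tau-Z_{\tau'})$ generate the $3$-torsion transverse to $\mathbf{1}$. The main obstacle is the $3$-adic bookkeeping at the interface of these pieces: $Z_\sigma^{*}$ carries both a $\tfrac13$ and a $\tfrac1{252}$ coefficient, so one must verify that the normalizations $\tfrac1{84}$ and $\tfrac13$, together with $\sum_\tau\beta_\tau\equiv0\pmod3$, interlock to produce exactly these vectors. A careful check of the $3$-part is essential here, since the span of the two families lands in the sublattice where the tuple-sum $\sum_\sigma n_\sigma$ is divisible by $3$, and an index-$3$ sublattice of this kind is easy to mistake for the full dual; I would therefore compute the Smith normal form of $3I+J$ explicitly (the $\gcd$'s of the $k\times k$ minors giving the invariant factors $1,3,\dots,3,252$) to pin down $\Pi'/\Pi\cong(\bZ/3)^{79}\oplus\bZ/252$ and match generators against each invariant factor, rather than rely on a dimension count.

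As a consistency check the order $\lvert\Pi'/\Pi\rvert=\det G=84\cdot 3^{80}=2^2\cdot3^{81}\cdot7$ agrees with the discriminant in Proposition~\ref{prop:bigdisc}, and the identity $\tfrac1{84}\sum_\tau Z_\tau=\tfrac1{28}c_2(X)$ ties the first generator to the geometry via the non-divisibility of $c_2(X)$. I would close by translating the lattice statement back to the two displayed generators, noting that the second family is most economically recorded not by listing all admissible $\beta$ but by the single sublattice condition $\sum_\tau\beta_\tau\equiv0\pmod3$, which is intrinsic because the permutation action of Proposition~\ref{prop:monobig} preserves both this congruence and the class $\sum_\tau Z_\tau$.
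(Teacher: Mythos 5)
Your reduction to the Gram matrix $G=3I+J$ and your dual basis $Z^*_\sigma=\frac13 Z_\sigma-\frac{1}{252}\sum_\tau Z_\tau$ are correct, but the ``main obstacle'' you flag at the end is not bookkeeping to be finished off by a Smith normal form computation: it is fatal, and your own observations already settle the matter in the negative. Consider the homomorphism $\epsilon:\Pi'\to\bZ/3\bZ$ sending $v$ to $\sum_\sigma (v\cdot Z_\sigma)\bmod 3$. As you computed, both proposed families lie in $\ker\epsilon$: the first generator pairs to $1$ with every $Z_\sigma$ (tuple sum $81\equiv 0$), and an element of the second family has tuple sum $28\sum_\tau\beta_\tau\equiv\sum_\tau\beta_\tau\equiv 0\pmod 3$. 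Since $\epsilon(Z^*_\sigma)=1$, the map $\epsilon$ is surjective, so the lattice $M$ spanned by the listed classes satisfies $M\subset\ker\epsilon\subsetneq\Pi'$; in fact $M=\ker\epsilon$, of index $3$ in $\Pi'$. Concretely, your filtration gives $[M:\Pi]=28\cdot 3^{80}$, whereas $[\Pi':\Pi]=\det G=84\cdot 3^{80}$, and no dual basis vector $Z^*_\sigma$ is an integral combination of the listed generators. So the proposition as stated is false; the step of your plan ``exhibit each $Z^*_\sigma$ as an integral combination of the generators'' cannot be carried out, and you should have drawn that conclusion instead of deferring it.

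You should also know that the paper's own proof stumbles at exactly this point. It computes, as you do, $[M_2:\Pi]=3^{80}$ and $[M:M_2]=28$, and then declares $3^{80}\cdot 28$ to be the discriminant; but the discriminant, correctly computed in Proposition~\ref{prop:bigdisc}, is $2^2\cdot 3^{81}\cdot 7=3^{80}\cdot 84$, three times larger. The correct description of the dual is the full $\bZ$-span of the vectors $Z'_\tau=\frac13\left(\frac{1}{28}c_2(X)-Z_\tau\right)$, with \emph{no} congruence condition on the coefficients --- equivalently, one must adjoin a single $Z'_\tau$ to the listed generators; the listed classes generate exactly the index-three sublattice $\{v\in\Pi':\sum_\tau v\cdot Z_\tau\equiv 0\pmod 3\}$. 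This correction propagates: the description of $\Pi'$ in Proposition~\ref{prop:Zprime} has the same defect, and the congruence $\sum_\tau\beta_\tau\equiv 0\pmod 3$ imposed on projections of integral classes in (\ref{eqn:Ansatz3}) is not justified by duality alone.
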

\begin{proof}
It is evident that these classes generate a subgroup containing $\Pi$ and intersect each of
the $Z_{\tau}$ integrally.  Thus if $M$ is the lattice they generate, we have
$$\Pi \subset M \subset \Pi'.$$
We compute the index of $\Pi$ in $M$.  The sublattice $M_2\subset \Pi'$ generated by the classes 
$\frac{1}{3} \sum_{\tau} \beta_{\tau} Z_{\tau}$ as above
factors
$$\Pi \subset M_2 \subset \frac{1}{3} \Pi.$$
The image of $M_2$ in $\frac{1}{3} \Pi/\Pi\simeq (\bZ/3\bZ)^{81}$ is a hyperplane, thus $\Pi \subset M_2$ has
index $3^{80}$.  Note that $c_2(X) \in M_2$ as a primitive vector, thus
$$
M_2 \subset M_2+\frac{1}{28}c_2(X) = M
$$ 
has index $28$.  Thus the index of $\Pi \subset M$ is $3^{80}\cdot 28$,
which is the discriminant.  
\end{proof}

\begin{prop} \label{prop:Zprime}
Consider the vectors $Z'_{\tau}=\frac{1}{3}(\frac{c_2(X)}{28}-Z_{\tau})$ for each $\tau$, which have
intersections
\begin{equation} \label{eqn:Zetatau}
Z'_{\tau_1}Z'_{\tau_2} = \begin{cases} \frac{83}{28\cdot 9} & \text{ if } \tau_1= \tau_2 \\
					\frac{-1}{28\cdot 9} & \text{ if } \tau_1\neq \tau_2,
		\end{cases}.
\end{equation}
We have
$$\Pi'=\{ \sum_{\tau} \beta_{\tau} Z'_{\tau}: \beta_{\tau} \in \bZ, \sum_{\tau} \beta_{\tau} \equiv 0 \pmod{3} \}.$$
The intersection form on $\Pi'$ takes values in $\frac{1}{28\cdot 9}\bZ$;  squares of elements are in 
$\frac{1}{28 \cdot 3}\bZ$.  
\end{prop}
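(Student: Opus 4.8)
The plan is to reduce everything to direct computation in the $Z_\tau$-basis, using the data $c_2(X) = \frac{1}{3}\sum_\tau Z_\tau$, $c_2(X)^2 = 756$, $c_2(X)\cdot Z_\tau = 28$, and the Gram matrix (\ref{eqn:Ztau}), together with the description of $\Pi'$ furnished by the preceding proposition. First I would verify the intersection table (\ref{eqn:Zetatau}). Expanding
$$Z'_{\tau_1}\cdot Z'_{\tau_2} = \frac{1}{9}\left(\frac{c_2(X)^2}{28^2} - \frac{c_2(X)\cdot Z_{\tau_1}}{28} - \frac{c_2(X)\cdot Z_{\tau_2}}{28} + Z_{\tau_1}\cdot Z_{\tau_2}\right)$$
and substituting $c_2(X)^2/28^2 = 756/784 = 27/28$ and $c_2(X)\cdot Z_\tau/28 = 1$ collapses the bracket to $\tfrac{-29}{28} + Z_{\tau_1}\cdot Z_{\tau_2}$; plugging in $Z_{\tau_1}\cdot Z_{\tau_2}=4$ or $1$ gives $\frac{83}{28\cdot 9}$ and $\frac{-1}{28\cdot 9}$ respectively.

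Next I would set up the change of basis via two linear identities. Summing the definition of $Z'_\tau$ over $A[3]$ and using $\sum_\tau Z_\tau = 3c_2(X)$ gives $\sum_\tau Z'_\tau = -\frac{c_2(X)}{28}$, while solving the definition for $Z_\tau$ gives $Z_\tau = \frac{c_2(X)}{28} - 3Z'_\tau$. With these I would prove the description of $\Pi'$ by a two-sided inclusion. For the forward direction, an integral combination $\sum_\tau \beta_\tau Z'_\tau$ with $\sum_\tau \beta_\tau = 3k \equiv 0 \pmod 3$ expands, via $Z'_\tau = \frac{c_2(X)}{84} - \frac{Z_\tau}{3}$, into $k\frac{c_2(X)}{28} - \frac{1}{3}\sum_\tau\beta_\tau Z_\tau$, which is manifestly a sum of the two generator types of the preceding proposition (the congruence on $\sum_\tau\beta_\tau$ is exactly what legitimizes the second term). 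Conversely, the two generators of $\Pi'$ lie in the set: $\frac{c_2(X)}{28} = -\sum_\tau Z'_\tau$ has coefficient-sum $-81\equiv 0$, and substituting $Z_\tau = \frac{c_2(X)}{28} - 3Z'_\tau$ into $\frac{1}{3}\sum_\tau\gamma_\tau Z_\tau$ (with $\sum_\tau\gamma_\tau = 3m \equiv 0$) and clearing $c_2(X)$ writes it as $-\sum_\tau(m+\gamma_\tau)Z'_\tau$, whose coefficient-sum $-84m$ is again divisible by $3$. Since the set is visibly a subgroup, these inclusions give equality.

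Finally, for the statements on the form, I would compute $v^2$ for $v = \sum_\tau\beta_\tau Z'_\tau$ directly from (\ref{eqn:Zetatau}). Writing $\sum_{\tau_1\neq\tau_2}\beta_{\tau_1}\beta_{\tau_2} = (\sum_\tau\beta_\tau)^2 - \sum_\tau\beta_\tau^2$ collapses the double sum to
$$v^2 = \frac{1}{28\cdot 9}\left(84\sum_\tau\beta_\tau^2 - \Big(\sum_\tau\beta_\tau\Big)^2\right) = \frac{1}{3}\sum_\tau\beta_\tau^2 - \frac{1}{28\cdot 9}\Big(\sum_\tau\beta_\tau\Big)^2.$$
That general products land in $\frac{1}{28\cdot 9}\bZ$ is immediate, since the entries of (\ref{eqn:Zetatau}) already lie there. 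The sharper claim for squares is the step I would watch most carefully, as it is where the congruence condition does the real work: the first term lies in $\frac{1}{3}\bZ \subset \frac{1}{28\cdot 3}\bZ$, and for the second I would use $\sum_\tau\beta_\tau = 3k$ to rewrite it as $\frac{k^2}{28} \in \frac{1}{28}\bZ \subset \frac{1}{28\cdot 3}\bZ$, whence $v^2 \in \frac{1}{28\cdot 3}\bZ$. The only genuine subtlety in the whole argument is keeping the mod-$3$ bookkeeping consistent across the change of basis, which is precisely what upgrades the naive diagonal denominator $28\cdot 9$ to $28\cdot 3$.
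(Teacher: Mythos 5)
Your proposal is correct and is essentially the paper's argument: the paper declares the intersection table and the description of $\Pi'$ to be immediate consequences of the preceding propositions (``only the last statement requires verification'') and handles the integrality of squares by observing that the matrix (\ref{eqn:83matrix}) is congruent modulo $3$ to the all-$(-1)$ matrix, which annihilates vectors whose coefficients sum to $0 \pmod 3$. Your collapsed formula $v^2=\frac{1}{3}\sum_\tau\beta_\tau^2-\frac{1}{28\cdot 9}\bigl(\sum_\tau\beta_\tau\bigr)^2$ is exactly that congruence argument written out explicitly, so the two proofs coincide in substance, with yours additionally spelling out the basis-change bookkeeping for the first two claims that the paper omits.
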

\begin{proof}
Only the last statement requires verification:  It suffices to observe that the matrix
\begin{equation} \label{eqn:83matrix}
\Psi=\left( \begin{matrix} 83 & -1 & -1 & \cdots \\
		        -1 & 83 & -1 & \ddots \\
		        -1 & -1 & \ddots & \vdots \\
			\vdots & \ddots & \cdots & 83 \end{matrix} \right)
\end{equation}
is congruent to 
$$\left( \begin{matrix} -1 & -1 & -1 & \cdots \\
		        -1 & -1 & -1 & \ddots \\
		        -1 & -1 & \ddots & \vdots \\
			\vdots & \ddots & \cdots & -1 \end{matrix} \right)$$
modulo three.  Elements summing to zero modulo three are in the kernel.
\end{proof}

\section{Computing $\left(\lambda,\lambda\right)$}
We formulate a new {\em Ansatz} in light of the example in Section~\ref{sect:key}.

Let $P$ be a class of a plane, which we now assume can be written
\begin{equation} P=a\lambda^2 + bc_2(X) + \widehat{Z} \label{eqn:Ansatz2}
\end{equation}
as above and in the proof of Proposition~\ref{prop:formula}.  
Note that $\widehat{Z}^2 \ge 0$ as the lattice $\Pi$ is positive definite,
thus 
$$(a\lambda^2+bc_2(X))^2 \le 3.$$

Using the equations 
$$\begin{array}{rcl}
-3&=& 54a\left(\lambda,\lambda\right) + b 756 \\
\frac{\left(\lambda,\lambda\right)^2}{36} &=& 9a\left(\lambda,\lambda\right)^2 + 54b\left(\lambda,\lambda\right) 
\end{array}
$$ 
to eliminate $a$ and $b$, we find
$$
(a\lambda^2+bc_2(X))^2=\frac{1}{46656}(7\left(\lambda,\lambda\right)^2+ 108 \left(\lambda,\lambda\right)+972).$$
This is always positive, and is $\le 3$ only when $|\left(\lambda,\lambda\right)| \le 150$.  
Thus there are 
only finitely many possibilities for $\left( \lambda,\lambda\right)$.

\

We introduce a new class:  Let $\mu \in \bZ \lambda^2 + \bZ c_2(X)$ denote a primitive
class orthogonal to $c_2(X)$ such that $\lambda^2$ appears with a positive coefficient.  
This is unique up to a positive scalar;  we'll choose a specific representative 
in the next lemma.
Note that $\mu$ is also orthogonal to each of the $Z_{\tau}$ and $Z'_{\tau}$,
by Proposition~\ref{prop:decomp}.  
\begin{lemm}
We have
$\mu^2 >0.$
\end{lemm}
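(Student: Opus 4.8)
The plan is to use the three Fujiki-type identities already recorded, namely $\lambda^4=9\left(\lambda,\lambda\right)^2$, $\lambda^2 c_2(X)=54\left(\lambda,\lambda\right)$ and $c_2(X)^2=756$, to compute $\mu^2$ directly. First I would write $\mu=p\,\lambda^2+q\,c_2(X)$ with $p,q\in\bZ$ and $p>0$; the condition $\mu\cdot c_2(X)=0$ together with $\lambda^2 c_2(X)=54\left(\lambda,\lambda\right)$ and $c_2(X)^2=756$ forces $q/p=-\left(\lambda,\lambda\right)/14$. Since rescaling $\mu$ by a positive rational does not change the sign of $\mu^2$, I may work with the representative $\mu_0=14\,\lambda^2-\left(\lambda,\lambda\right)c_2(X)$, recovering the primitive $\mu$ by dividing out $\gcd\!\left(14,\left(\lambda,\lambda\right)\right)$.

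Next I would substitute the three identities into $\mu_0^2=196\,\lambda^4-28\left(\lambda,\lambda\right)\lambda^2 c_2(X)+\left(\lambda,\lambda\right)^2 c_2(X)^2$. This is a short calculation yielding $\mu_0^2=1008\left(\lambda,\lambda\right)^2$, so $\mu^2$ is a positive multiple of $\left(\lambda,\lambda\right)^2$. Equivalently, and more conceptually, the Gram matrix of the rank-two lattice $\left<\lambda^2,c_2(X)\right>$ under the cup product has trace $9\left(\lambda,\lambda\right)^2+756>0$ and discriminant $9\left(\lambda,\lambda\right)^2\cdot 756-\left(54\left(\lambda,\lambda\right)\right)^2=3888\left(\lambda,\lambda\right)^2$; when the latter is strictly positive both eigenvalues are positive, the restricted form is positive definite, and every nonzero class in the span, in particular $\mu$, has positive square.

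The one point that is not formal is that $\left(\lambda,\lambda\right)\neq 0$, and I expect this to be the main obstacle: both expressions above vanish exactly when $\lambda$ is isotropic, in which case $\mu=\lambda^2$ and $\mu^2=\lambda^4=0$. To settle it I would use that $\lambda=6\ell$ with $\ell$ the class of a line on the Lagrangian plane $P\cong\bP^2$, so that $\left(\lambda,\ell\right)=\left(\lambda,\lambda\right)/6$ is the geometric degree of $\lambda|_P$ on a line; since $\ell$ spans an extremal ray of a contractible Lagrangian plane it has negative self-intersection (as in the K3 and length-two Hilbert scheme cases, where $\left(\ell,\ell\right)=-2$ and $-5/2$), hence $\left(\lambda,\lambda\right)=36\left(\ell,\ell\right)<0$ and in particular is nonzero. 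Therefore $\mu^2>0$, completing the proof.
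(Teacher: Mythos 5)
Your core computation is correct and is essentially the paper's own proof: the paper takes the representative $\mu=7\lambda^2-\frac{(\lambda,\lambda)}{2}c_2(X)$ (half of your $\mu_0$) and evaluates its square using the same identities $\lambda^4=9(\lambda,\lambda)^2$, $\lambda^2c_2(X)=54(\lambda,\lambda)$, $c_2(X)^2=756$, obtaining $\mu^2=252(\lambda,\lambda)^2$, which matches your $\mu_0^2=1008(\lambda,\lambda)^2=4\cdot 252(\lambda,\lambda)^2$; your Gram-matrix reformulation is the same argument in different packaging, not a different route.

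The one divergence is your final paragraph, and there your instinct is good but your fix does not work. You are right that the conclusion $\mu^2>0$ requires $(\lambda,\lambda)\neq 0$ --- a point the paper's proof passes over in silence, even though nondegeneracy is already used tacitly earlier in that section (the linear system determining $a$ and $b$ has determinant proportional to $(\lambda,\lambda)^2$). However, justifying nonvanishing by asserting that the extremal ray of a contractible Lagrangian plane has negative self-intersection is circular here: that assertion is precisely the kind of statement this paper's program aims to establish, and the values $(\ell,\ell)=-2$ and $-5/2$ that you cite for K3 surfaces and length-two Hilbert schemes are themselves theorems of this type, not instances of a general principle available at this stage. If you want to close the gap non-circularly, it can be done inside the paper's own machinery: if $(\lambda,\lambda)=0$, then $\lambda^2$ is orthogonal to $c_2(X)$, to every $Z_{\tau}$ (since $Z_{\tau}\cdot D_1\cdot D_2=2(D_1,D_2)$), and to itself, so the projection of $[P]$ onto $\Pi\otimes\bQ$ lies in $\Pi'$ and has square $S=[P]^2=3$, while $c_2(X)\cdot[P]=-3$ forces $\sum_{\tau}\beta_{\tau}=9$ exactly as in the paper's later lemma; then $84\sum_{\tau}\beta_{\tau}^2=(\sum_{\tau}\beta_{\tau})^2+9\cdot 28\cdot S=81+756=837$, which is not divisible by $84$, contradicting $\beta_{\tau}\in\bZ$. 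Alternatively, one can simply carry $(\lambda,\lambda)\neq 0$ as a standing hypothesis, as the paper implicitly does.
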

\begin{proof}
We may write
$$\mu=7\lambda^2 -\frac{\left(\lambda,\lambda\right)}{2}c_2(X),$$
which is orthogonal to $c_2(X)$.  Using the intersection numbers computed above, we find
$$\mu^2=49 \cdot 9 \left(\lambda,\lambda\right)^2 - 7 \cdot 54 \left(\lambda,\lambda \right)^2
+ 756/4 \left(\lambda,\lambda \right)^2 = 252 \left(\lambda,\lambda \right)^2 .$$
\end{proof}

We write the class of a Lagrangian plane in terms of this orthogonal decomposition;
since $[P]$ is integral, its projection onto each summand lies in the dual
to the underlying lattice.  In particular, we have
\begin{equation} \label{eqn:Ansatz3}
[P]=\alpha \mu + \sum_{\tau} \beta_{\tau} Z'_{\tau}, \beta_{\tau} \in \bZ, \alpha \in \bQ,
\end{equation}
and $\sum_{\tau} \beta_{\tau} \equiv 0 \pmod{3}$.  
If $a$ is the constant in Equation~\eqref{eqn:Ansatz2} we have $a=7\alpha$.  
Combining the equations above with these, we find
$$(\alpha \mu)^2=a^2 \left(\lambda,\lambda \right)^2 \frac{36}{7}.
$$

Write 
$$S=\left(\sum_{\tau} \beta_{\tau} Z'_{\tau}\right)^2=P^2 - (\alpha \mu)^2 
= 3 - a^2 \left(\lambda,\lambda \right)^2 \frac{36}{7}$$
and use the equations above to eliminate $a$.  We find
\begin{align*}
S&=\frac{-49 \left( \lambda,\lambda \right)^2 -756 \left(\lambda,\lambda\right) + 976860 }{326592} \\
&=\frac{-49 \left( \lambda,\lambda \right)^2 -756 \left(\lambda,\lambda\right) + 2^2\cdot 3^6 \cdot 5 \cdot 67 }{2^6 \cdot 3^6 \cdot 7} 
\end{align*}
The lattice $\left< Z'_{\tau} \right>$ is positive definite, so $S>0$.  
This implies 
\begin{equation} \label{eqn:bounds}
-150 \le \left(\lambda,\lambda \right) <134.
\end{equation}

We know that $9\cdot 28 \cdot S$ is an integer represented by the quadratic form (\ref{eqn:83matrix}),
and thus is divisible by $3$.  We therefore obtain the following divisibility
$$2^4 3^5 | 49 \left(\lambda,\lambda\right)^2 + 756 \left(\lambda,\lambda\right) - 2^2\cdot 3^6 \cdot 5 \cdot 67.$$
Thus we find 
\begin{equation} \label{eqn:congruences}
\left( \lambda,\lambda \right) \equiv 2 \pmod{4}, \quad 
\left( \lambda,\lambda \right) \equiv 0 \pmod{27}.
\end{equation}
The only admissible values in the range $(-150,134)$ are
$$\left(\lambda,\lambda\right)=\pm 54.$$
It follows that 
\begin{equation} \label{eqn:Svalue}
S=\left(\sum_{\tau} \beta_{\tau} Z'_{\tau}\right)^2= 75/28
\end{equation}
and $9\cdot 75=675$ is represented by (\ref{eqn:83matrix}).   

\section{Analysis of short vectors}
We complete the proof of Theorem~\ref{theo:main} and extract
additional information on how classes of Lagrangian planes
project onto the tautological classes.  

\begin{lemm}
Let $\sum_{\tau} \beta_{\tau} Z'_{\tau}$ denote the projection of $P$ onto the
sublattice $\Pi'$, as described in (\ref{eqn:Ansatz3}).   Then we have
$\sum_{\tau} \beta_{\tau}=9$.  
\end{lemm}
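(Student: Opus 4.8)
The plan is to compute the intersection number $P \cdot c_2(X)$ in two different ways and compare. On one hand, we have the geometric fact recorded in Proposition~\ref{prop:bigdisc}, namely that $c_2(X) \cdot \bP^2 = -3$ for any Lagrangian plane. On the other hand, using the orthogonal decomposition (\ref{eqn:Ansatz3}), I would intersect $[P] = \alpha\mu + \sum_\tau \beta_\tau Z'_\tau$ with $c_2(X)$. Since $\mu$ is orthogonal to $c_2(X)$ by construction, the first term contributes nothing, so the entire intersection number comes from $\sum_\tau \beta_\tau (Z'_\tau \cdot c_2(X))$.

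First I would compute $Z'_\tau \cdot c_2(X)$. Recall $Z'_\tau = \frac{1}{3}\left(\frac{c_2(X)}{28} - Z_\tau\right)$, and from (\ref{eqn:Ztau}) we have $c_2(X) \cdot Z_\tau = 28$ together with $c_2(X)^2 = 756$ from Proposition~\ref{prop:bigdisc}. Therefore
\[
Z'_\tau \cdot c_2(X) = \frac{1}{3}\left(\frac{756}{28} - 28\right) = \frac{1}{3}(27 - 28) = -\frac{1}{3}.
\]
This is the key constant: each $Z'_\tau$ pairs with $c_2(X)$ to give $-1/3$, independent of $\tau$.

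Assembling these, the identity $c_2(X) \cdot [P] = -3$ becomes
\[
-3 = \sum_\tau \beta_\tau \left(Z'_\tau \cdot c_2(X)\right) = -\frac{1}{3}\sum_\tau \beta_\tau,
\]
which immediately yields $\sum_\tau \beta_\tau = 9$, as claimed. The argument is essentially a bookkeeping computation; the only point requiring care is ensuring that the $\mu$-component genuinely drops out, which follows from the orthogonality of $\mu$ to $c_2(X)$ established just before the lemma (and from the fact that all $Z'_\tau$ lie in the summand orthogonal to $\mu$, by Proposition~\ref{prop:decomp}). I do not anticipate a genuine obstacle here, since all the requisite intersection numbers have been assembled in the preceding sections; the task is simply to verify that $c_2(X) \cdot \bP^2 = -3$ and the value $Z'_\tau \cdot c_2(X) = -1/3$ combine to pin down the sum.
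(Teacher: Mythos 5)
Your proposal is correct and follows essentially the same route as the paper's own proof: both pair $[P]=\alpha\mu+\sum_{\tau}\beta_{\tau}Z'_{\tau}$ with $c_2(X)$, use $c_2(X)\cdot[P]=-3$ and $c_2(X)\cdot\mu=0$, and compute $c_2(X)\cdot Z'_{\tau}=-1/3$ from $c_2(X)^2=756$ and $c_2(X)\cdot Z_{\tau}=28$ to conclude $\sum_{\tau}\beta_{\tau}=9$. There is nothing to add.
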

\begin{proof}
Recall that $c_2(X)[P]=-3$ and $c_2(X)\mu=0$, so that
$$c_2(X) (\sum_{\tau} \beta_{\tau} Z'_{\tau})=-3.$$
On the other hand, we have
$$c_2(X) Z'_{\tau}=c_2(X) (c_2(X)/84-1/3 Z_{\tau})=-1/3$$
so we find
$$c_2(X) (\beta_{\tau} Z'_{\tau})= -1/3 \sum_{\tau} \beta_{\tau}.$$
The equation follows.
\end{proof}

Let $\Psi=9\cdot 28 \cdot \Pi'$, i.e., the dual to $\Pi$ renormalized to
an integral form as in (\ref{eqn:83matrix}).  Then the Lemma implies
$$\begin{array}{rcl}
(\sum_{\tau} \beta_{\tau}Z_{\tau})^2_{\Psi}&=&84(\sum_{\tau} \beta_{\tau}^2)-(\sum_{\tau} \beta_{\tau})^2 \\
					   & &84(\sum_{\tau} \beta_{\tau}^2)-81.
\end{array}
$$
This means that $9\cdot 28 \cdot S-81 \equiv 0 \pmod{84}$, which yields
the congruences
$$-49\left(\lambda,\lambda\right)^2 - 756 \left(\lambda,\lambda\right)+2^2\cdot 3^6\cdot 7 \cdot 51 \equiv 0 \pmod{2^6\cdot 3^5 \cdot 7},$$
which are stronger than those in (\ref{eqn:congruences}).  We obtain that
$$\lambda \equiv 2 \pmod{8}, \quad \lambda \equiv 0 \pmod{27}.$$
Combining this with the bounds (\ref{eqn:bounds}), we conclude that
$$\left(\lambda,\lambda\right)=-54.$$
Since $\lambda=6\ell$, we deduce that $\left(\ell,\ell\right)=-3/2$, which 
completes the proof of Theorem~\ref{theo:main}.  

\begin{prop}
We have
$\beta_{\tau}=0$ for all but nine values of $\tau$, for which $\beta_{\tau}=1$. 
\end{prop}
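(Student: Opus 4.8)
The plan is to reduce the statement to an elementary fact about integer vectors, using the two constraints already assembled. Concretely, I will extract both the linear sum $\sum_{\tau}\beta_{\tau}$ and the quadratic sum $\sum_{\tau}\beta_{\tau}^2$, show that they are equal, and then exploit the integrality of the $\beta_{\tau}$ to pin down each coordinate.

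First I would recall the value $S=(\sum_{\tau}\beta_{\tau}Z'_{\tau})^2=75/28$ established in \eqref{eqn:Svalue}, together with the identity for the renormalized integral form $\Psi=9\cdot 28\cdot \Pi'$ of \eqref{eqn:83matrix}, namely $(\sum_{\tau}\beta_{\tau}Z_{\tau})^2_{\Psi}=84(\sum_{\tau}\beta_{\tau}^2)-(\sum_{\tau}\beta_{\tau})^2$. Since this form is obtained from $S$ by multiplying through by $9\cdot 28$, its value is $9\cdot 75=675$. Invoking the Lemma above, which gives $\sum_{\tau}\beta_{\tau}=9$, I substitute to obtain $84\sum_{\tau}\beta_{\tau}^2-81=675$, whence $\sum_{\tau}\beta_{\tau}^2=9$.

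The final step is the elementary observation that $\sum_{\tau}\beta_{\tau}=\sum_{\tau}\beta_{\tau}^2=9$ forces $\sum_{\tau}\beta_{\tau}(\beta_{\tau}-1)=0$. Each summand $\beta_{\tau}(\beta_{\tau}-1)$ is a product of consecutive integers, hence nonnegative, so every term must vanish; this means $\beta_{\tau}\in\{0,1\}$ for each $\tau$. Combined with $\sum_{\tau}\beta_{\tau}=9$, exactly nine of the $\beta_{\tau}$ equal $1$ and the remainder vanish, which is the desired conclusion.

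I do not anticipate a genuine obstacle here, as the substantive work was already carried out in establishing $\left(\lambda,\lambda\right)=-54$ and the value of $S$. The only point demanding care is the bookkeeping of the renormalization factor $9\cdot 28$ relating $S$, the $\bQ$-valued form on $\Pi'$, and the integral form $\Psi$, so that the two equations $\sum_{\tau}\beta_{\tau}=9$ and $\sum_{\tau}\beta_{\tau}^2=9$ emerge consistently and the positive-definiteness of $\Pi$ is correctly transported to the renormalized lattice.
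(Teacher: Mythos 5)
Your proposal is correct, and its core coincides with the paper's own proof: both extract $\sum_{\tau}\beta_{\tau}^2=9$ from the value $S=75/28$ via the renormalized integral form $\Psi=9\cdot 28\cdot \Pi'$, using the Lemma's $\sum_{\tau}\beta_{\tau}=9$ to evaluate $84\sum_{\tau}\beta_{\tau}^2-81=675$. The only divergence is the concluding step. The paper first uses integrality to note that at most nine of the $\beta_{\tau}$ are nonzero, then restricts to that subspace and invokes the maximization of $x_1+\cdots+x_n$ on the sphere (i.e.\ the equality case of Cauchy--Schwarz) to force all nonzero coordinates to be equal, hence equal to $1$. You instead observe that
$$\sum_{\tau}\beta_{\tau}(\beta_{\tau}-1)=\sum_{\tau}\beta_{\tau}^2-\sum_{\tau}\beta_{\tau}=9-9=0,$$
and since each summand is a product of consecutive integers it is nonnegative, so every term vanishes and $\beta_{\tau}\in\{0,1\}$; the constraint $\sum_{\tau}\beta_{\tau}=9$ then gives exactly nine ones. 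Your finish is slightly more economical: it is purely arithmetic, uses integrality in a single stroke, and avoids both the preliminary restriction to a subspace and the appeal to a real-variable extremal argument. The paper's version has the mild advantage of making visible the underlying equality case of a general inequality, but for integer vectors your identity is the cleaner route. Both arguments are complete.
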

\begin{proof}
We have shown (see (\ref{eqn:Svalue}) above) that
$S=75/28$, which means that
$$84 \sum_{\tau} \beta_{\tau}^2=81 + 9 \cdot 28 \cdot S=756$$
whence
$$\sum_{\tau} \beta_{\tau}^2=\sum_{\tau} \beta_{\tau}=9.$$
Since the $\beta_{\tau}$ are integers, 
at most $9$ of them are nonzero, and we may
as well restrict to that subspace.  

By basic calculus, the maximal value of the function $x_1+\cdots+x_n$ on the unit sphere
is $\sqrt{n}$, achieved precisely at $x_1=x_2=\cdots=x_n=1/\sqrt{n}$.  Applying this
to our nine-dimensional subspace gives the result.  
\end{proof}

\section{Monodromy and intersection analysis}
\label{sect:monodromy}

We continue to 
identify the set of all $\tau$ with $A[3]$, the three-torsion
of an abelian surface.  The following Proposition
summarizes what we have obtained so far:
\begin{prop} \label{prop:summary}
Let $X$ be deformation equivalent to $K_2(A)$, $P\subset X$ a Lagrangian plane,
$\ell\in \rH_2(X,\bZ)$ the class of a line on the plane, and $\lambda=6\ell \in \rH^2(X,\bZ)$.
Then we have
\begin{eqnarray*}
[P]&=&\frac{1}{216}\lambda^2 + \frac{1}{56} c_2(X)+ \sum_{\tau \in \Lambda} Z'_{\tau}  \\
   &=&\frac{1}{216}\lambda^2 + \frac{1}{8}c_2(X) - \frac{1}{3} \sum_{\tau \in \Lambda} Z_{\tau}, 
\end{eqnarray*}
where $Z'_{\tau}=\frac{1}{3}(\frac{c_2(X)}{28}-Z_{\tau})$ and $\Lambda \subset A[3]$ is
a set of cardinality nine.  
\end{prop}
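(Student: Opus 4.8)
The plan is to assemble the two closed formulas from ingredients already established, since by this point both the value $(\lambda,\lambda)=-54$ and the structure of the projection of $[P]$ onto $\Pi'$ are in hand. I would start from the orthogonal Ansatz (\ref{eqn:Ansatz3}), writing $[P]=\alpha\mu+\sum_{\tau}\beta_{\tau}Z'_{\tau}$, where $\alpha\mu$ is the component in $\bZ\lambda^2+\bZ c_2(X)$ and $\sum_{\tau}\beta_{\tau}Z'_{\tau}$ is the component in $\Pi'$.

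First I would fix the continuous part. Substituting $(\lambda,\lambda)=-54$ into the two linear relations among $a$, $b$, and $(\lambda,\lambda)$ recovers $a=1/216$, exactly as in the proof of Proposition~\ref{prop:formula}; since $a=7\alpha$, this gives $\alpha=1/1512$. At $(\lambda,\lambda)=-54$ the representative $\mu=7\lambda^2-\tfrac{(\lambda,\lambda)}{2}c_2(X)$ becomes $\mu=7\lambda^2+27c_2(X)$, so that
$$\alpha\mu=\frac{7}{1512}\lambda^2+\frac{27}{1512}c_2(X)=\frac{1}{216}\lambda^2+\frac{1}{56}c_2(X).$$

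Next I would insert the determination of the discrete part: the short-vector proposition of the previous section shows that $\beta_{\tau}=1$ for exactly nine values of $\tau$, which I collect into a set $\Lambda\subset A[3]$ with $|\Lambda|=9$, and $\beta_{\tau}=0$ otherwise. Combined with the previous display this yields the first formula. The second follows by expanding $Z'_{\tau}=\tfrac{1}{3}(\tfrac{c_2(X)}{28}-Z_{\tau})$ and summing over $\Lambda$, which contributes $\tfrac{9}{84}c_2(X)-\tfrac{1}{3}\sum_{\tau\in\Lambda}Z_{\tau}=\tfrac{3}{28}c_2(X)-\tfrac{1}{3}\sum_{\tau\in\Lambda}Z_{\tau}$; since $\tfrac{1}{56}+\tfrac{3}{28}=\tfrac{1}{8}$, the first formula converts into the second.

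Almost all of the content is thus bookkeeping with $(\lambda,\lambda)=-54$, and I expect the one genuinely rigid input---already established---to be the main obstacle were one to argue from scratch: from $S=75/28$ (see (\ref{eqn:Svalue})) one obtains $84\sum_{\tau}\beta_{\tau}^2=756$, hence $\sum_{\tau}\beta_{\tau}^2=9$, while $\sum_{\tau}\beta_{\tau}=9$; the sharp bound $\sum_i x_i\le\sqrt{n}$ on the unit sphere then forces each nonzero $\beta_{\tau}$ to equal $1$ and their number to be exactly nine. Everything else is the algebra of carrying the value $(\lambda,\lambda)=-54$ through the two Ans\"atze and the change of basis from the $Z'_{\tau}$ to the $Z_{\tau}$.
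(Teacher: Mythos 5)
Your proposal is correct and coincides with the paper's own treatment: the paper gives no separate proof of Proposition~\ref{prop:summary}, presenting it explicitly as a summary of the preceding sections, and your assembly---$a=1/216$ and $\alpha=a/7=1/1512$ from the two relations at $\left(\lambda,\lambda\right)=-54$, the short-vector proposition forcing exactly nine coefficients $\beta_{\tau}=1$, and the change of basis $Z'_{\tau}=\frac{1}{3}(\frac{c_2(X)}{28}-Z_{\tau})$---is precisely how those prior results combine. The arithmetic checks out ($\alpha\mu=\frac{1}{216}\lambda^2+\frac{1}{56}c_2(X)$, $\frac{9}{84}=\frac{3}{28}$, and $\frac{1}{56}+\frac{3}{28}=\frac{1}{8}$), so there is nothing to add.
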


We recall the following geometric facts:
\begin{itemize}
\item{the group $G_3$ acts on $X$, and thus on 
$\{Z_{\tau}:\tau \in A[3] \}$ and $\Pi$, as described in 
Proposition~\ref{prop:groupaction};}
\item{the $Z_{\tau}$ are canonical trianalytic submanifolds, and they deform as $X$ deforms
(see the end of Section~\ref{sect:detailed});}
\item{the monodromy group acts on $\{Z_{\tau} \}$ and $\Pi$,
as described in Proposition~\ref{prop:monobig};
given a distinguished base point $\tau_0$,  the intersection of the
monodromy group with the stabilizer of $\tau_0$ contains the symplectic group
$\mathrm{Sp}(A[3])$, where $\tau_0$ is interpreted as $0$.}
\end{itemize}

\begin{theo}
\label{theo:planes}
Suppose $P\subset X$ is a Lagrangian plane as in Proposition~\ref{prop:summary}.  Then
$\Lambda \subset A[3]$ is a translate of a two-dimensional subspace.
\end{theo}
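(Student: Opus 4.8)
The plan is to combine the explicit model of Section~\ref{sect:key} with the symmetries recorded in Propositions~\ref{prop:groupaction} and~\ref{prop:monobig}. First I would note that, by Proposition~\ref{prop:summary}, the set $\Lambda$ is recovered intrinsically from the class of the plane as
$$\Lambda=\{\tau\in A[3]: [P]\cdot Z_{\tau}=-1\},$$
the remaining $Z_{\tau}$ meeting $P$ in degree zero; thus $\Lambda$ is an invariant of the pair $(X,P)$ that transforms equivariantly under any symmetry preserving the tautological classes $\{Z_{\tau}\}$. It is worth emphasizing at the outset that the intersection form alone cannot see the group-theoretic structure of $\Lambda$: every nine-element subset produces the same pairings $[P]\cdot Z_{\tau}\in\{0,-1\}$ and the same norm $S=75/28$ of the projection onto $\Pi'$ (cf.~(\ref{eqn:Svalue}) and the matrix (\ref{eqn:83matrix})). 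Consequently the fact that $\Lambda$ is a coset of a subspace must be imported from geometry rather than extracted by the lattice computations of the previous sections.

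Next I would assemble the available symmetries. The automorphism group $G_3=\bZ/2\bZ\ltimes A[3]$ of Proposition~\ref{prop:groupaction} fixes $\rH^2(X)$ pointwise—hence fixes $\lambda$—while carrying $P$ to another Lagrangian plane and acting on $\{Z_{\tau}\}$ through translation and negation on $A[3]$; this shows the collection of admissible $\Lambda$ is closed under $\tau\mapsto\tau+\tau'$ and $\tau\mapsto-\tau$. The full monodromy group of Proposition~\ref{prop:monobig} acts instead \emph{diagonally} through $\mathrm{Sp}(A[3])$, simultaneously on the abelian part of $\rH^2(X)$ and on the indices of the $Z_{\tau}$. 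Altogether the admissible configurations $(\lambda,\Lambda)$ are stable under $\mathrm{Sp}(A[3])\ltimes A[3]$, and this action sends a translate of a two-dimensional subspace to another such translate. Since the conclusion I want is invariant under these symmetries, it suffices to establish it after moving $P$ by monodromy into a convenient normal form.

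I would then reduce to the explicit model. Using the orbit analysis of $[\ell]$ from Section~\ref{sect:divisibility}—namely that the monodromy acts transitively on the primitive classes of square $-54$ and the prescribed divisibility—together with surjectivity of the period map, one transports $P$ so that $X=K_2(E_1\times E_2)$ and $\lambda=6E_1-3e$. Here Proposition~\ref{prop:formula} identifies the planes with this direction as the geometric family of Section~\ref{sect:key}, namely those associated with the translates of the series $3\cdot(0)$ on $E_1\times 0$, and for each of these the computation gives $\Lambda=E_1[3]$ up to translation by $A[3]$—a (non-isotropic) two-dimensional subspace, in agreement with the integrality statement of Section~\ref{sect:integrality}. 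Transporting $\Lambda=E_1[3]$ back by the chosen element of $\mathrm{Sp}(A[3])\ltimes A[3]$ then exhibits the original $\Lambda$ as a translate of a two-dimensional subspace.

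The main obstacle is precisely this normalization-and-classification step. Because the lattice-theoretic invariants are blind to the group structure of $\Lambda$, the entire content of the theorem is concentrated in showing that an arbitrary Lagrangian plane is monodromy-equivalent to one in the explicit family: one must verify both that the monodromy acts transitively on the relevant classes $\lambda$ and that, for the standard direction $\lambda=6E_1-3e$, every Lagrangian plane arises from the geometric construction of Section~\ref{sect:key} (so that its $\Lambda$ is forced to be a translate of $E_1[3]$). I expect this to rest on global Torelli for generalized Kummer fourfolds and on the contraction geometry that recovers $P$ from $\lambda$, rather than on any further numerical manipulation of the form $\Psi$.
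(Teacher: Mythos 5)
There is a genuine gap, and it sits exactly where you locate it yourself: the ``normalization-and-classification step.'' First, your reduction invokes transitivity of the monodromy on the equivalence classes of $\lambda$, but the paper does not prove this; it is posed as an open \emph{Problem} in Section~\ref{sect:divisibility} (transitivity is only known for the abstract group $\Gamma$ acting trivially on the discriminant group, via Eichler's criterion, not for the monodromy group itself). Second, and more fundamentally, even granting that some monodromy operator $\gamma$ carries $\lambda$ to $6E_1-3e$, $\gamma$ is a topological correspondence: it does not carry Lagrangian planes to Lagrangian planes, so $\gamma[P]$ need not be the class of any plane on $K_2(E_1\times E_2)$. To repair this you would have to work inside the Noether--Lefschetz divisor where $\lambda$ stays of type $(1,1)$ and then prove that \emph{every} Lagrangian plane with line class proportional to $6E_1-3e$ on such a deformation belongs to the explicit family of Section~\ref{sect:key}. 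That classification is nowhere available, is not a consequence of Proposition~\ref{prop:formula} (which computes the class of the known planes, not of all planes with that $\lambda$), and is essentially the assertion of Theorem~\ref{theo:planes} itself together with a resolution of the isotropic-subspace Problem --- which the paper explicitly leaves open, settling only a partial negative result in the penultimate section. So the proposal assumes what is to be proved.

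Your guiding premise --- that ``the intersection form alone cannot see the group-theoretic structure of $\Lambda$,'' so the theorem cannot follow from further manipulation of $\Psi$ --- is also where you diverge from the actual proof, and it is false as stated. It is true that the \emph{norm} $S=75/28$ only sees $|\Lambda|=9$, but cross-intersections with other integral classes do see the structure of $\Lambda$. The paper uses the Section~\ref{sect:key} planes and their monodromy translates only to produce \emph{integral} difference classes $\sum_{\tau\in\widetilde{\Lambda}}\bigl(Z'_{\tau}-Z'_{\tau+\tau_0}\bigr)$ in $\rH^4(X,\bZ)$, for every non-isotropic affine plane $\widetilde{\Lambda}\subset A[3]$; pairing $[P]$ against these and reducing modulo $9\cdot 28$ yields the congruence that $\#\{\Lambda\cap(\widetilde{\Lambda}+\tau_0)\}\pmod 3$ is independent of $\tau_0$. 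The theorem then follows from a purely finite-geometric statement (a Radon-transform-type result): any $q^2$-element subset of a symplectic $\bF_q^4$ whose intersection counts modulo $q$ with all affine non-isotropic planes are translation-invariant must itself be an affine plane, proved by a convexity/pigeonhole argument on the $q^2$ non-isotropic planes through a fixed line. Note also that the congruence coming from translates of $\Lambda$ alone (your $G_3$-symmetry) is provably insufficient --- the paper exhibits a nine-point counterexample --- so the non-isotropic planes $\widetilde{\Lambda}$, i.e.\ the lattice-theoretic input you discard, are indispensable.
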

Note that the example presented in Section~\ref{sect:key} and the facts on monodromy/automorphisms
quoted above imply that every translate $\Lambda$ of a two-dimensional non-isotropic subspace 
arises from a plane in a deformation of $X$.  

\begin{prob}
Can {\em isotropic} subspaces arise from Lagrangian planes?
\end{prob}

\begin{coro}
Suppose that $P\subset X$ is a plane contained in a manifold deformation equivalent
to $K_2(A)$.  Then the orbit of $P$ under $\Aut(X)$ contains nine distinct planes;  the stabilizer
of $P$ has order dividing nine.  
\end{coro}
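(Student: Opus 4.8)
The plan is to realize nine planes explicitly as translates of $P$ under the three-torsion group $A[3]\subset\Aut(X)$, reading off both the orbit size and the stabilizer from the coset structure of the index set $\Lambda$ supplied by Theorem~\ref{theo:planes}.

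First I would record the action on the tautological classes. By Proposition~\ref{prop:groupaction} the group $G_3=\bZ/2\bZ\ltimes A[3]$ acts on $X$ by automorphisms that are trivial on $\rH^2(X)$; in particular $A[3]\subset\Aut(X)$, and by Proposition~\ref{prop:monobig} translation by $\tau'\in A[3]$ sends $Z_\tau\mapsto Z_{\tau+\tau'}$ while fixing $\lambda$ and $c_2(X)$. Applying $\tau'$ to the formula of Proposition~\ref{prop:summary} thus yields
$$[\tau'\cdot P]=\frac{1}{216}\lambda^2+\frac{1}{8}c_2(X)-\frac{1}{3}\sum_{\tau\in\Lambda+\tau'}Z_\tau.$$
Because the $Z_\tau$ are linearly independent---$\Pi$ is positive definite by Proposition~\ref{prop:bigdisc}---distinct cosets $\Lambda+\tau'$ give distinct classes, so the plane $\tau'\cdot P$ is pinned down by the coset $\Lambda+\tau'$.

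I would then count cosets. By Theorem~\ref{theo:planes} the set $\Lambda$ is a translate of a two-dimensional subspace $V\subset A[3]=(\bZ/3\bZ)^4$, so the distinct translates $\Lambda+\tau'$ are precisely the cosets of $V$, numbering $[A[3]:V]=3^4/3^2=9$. Hence the translates $\tau'\cdot P$ fall into exactly nine distinct classes, exhibiting nine distinct planes in the $\Aut(X)$-orbit of $P$.

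For the stabilizer I would note that any automorphism fixing $P$ fixes $[P]$, hence fixes the set $\Lambda$; restricted to $A[3]$ this forces the translating element into $\{\tau':\Lambda+\tau'=\Lambda\}=V$, a group of order nine. Thus the stabilizer of $P$ inside $A[3]$ is contained in $V$ and has order dividing nine, and orbit--stabilizer returns the orbit of length $|A[3]|/|V|=9$ produced above. I expect the only delicate point---rather than a genuine obstacle---to be the passage from cohomology classes back to planes; this is exactly where the linear independence of the $Z_\tau$ is used to ensure that the nine cosets correspond to nine honestly distinct planes.
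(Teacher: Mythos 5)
Your proof is correct and is essentially the argument the paper intends: the corollary is stated without proof, and the intended deduction is exactly your combination of Proposition~\ref{prop:groupaction} (giving $A[3]\subset\Aut(X)$ acting trivially on $\rH^2$), the permutation action $Z_\tau\mapsto Z_{\tau+\tau'}$, Proposition~\ref{prop:summary}, and Theorem~\ref{theo:planes}, with linear independence of the $Z_\tau$ (positive definiteness of $\Pi$) converting the nine cosets of $V$ into nine distinct classes and bounding the stabilizer inside $V$. The only nuance is that the stabilizer need not equal $V$ (a translation in $V$ fixes $[P]$ but could conceivably move $P$), so the orbit may contain more than nine planes; your statement ``order dividing nine'' and ``contains nine distinct planes'' is exactly what survives, matching the corollary.
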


\begin{prop}
Let $\Lambda$ denote a set of points arising from a Lagrangian plane.  Then
$$\#\{\Lambda \cap (\Lambda +\tau_0) \} \pmod{3}$$
is constant as $\tau_0$ varies.
\end{prop}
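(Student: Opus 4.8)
The plan is to reduce the statement to the structural description of $\Lambda$ furnished by Theorem~\ref{theo:planes}, after which it becomes an elementary fact about affine subspaces of $A[3]\cong(\bZ/3\bZ)^4$. By that theorem we may write $\Lambda=v+V$, where $V\subset A[3]$ is a two-dimensional $\bF_3$-subspace (of order nine) and $v\in A[3]$; everything then follows from the coset structure of $V$.

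First I would note that $\Lambda+\tau_0=(v+\tau_0)+V$ is again a translate of the same subspace $V$, so $\Lambda$ and $\Lambda+\tau_0$ are two cosets of $V$. The standard dichotomy for cosets of a subgroup then applies: they are equal when $\tau_0\in V$ and disjoint otherwise. Concretely, a point $x=v+a$ with $a\in V$ lies in $\Lambda+\tau_0$ precisely when $a-\tau_0\in V$, i.e.\ when $\tau_0\in V$; in that case every point of $\Lambda$ lies in $\Lambda+\tau_0$, while if $\tau_0\notin V$ no point does.

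Consequently $\#\{\Lambda\cap(\Lambda+\tau_0)\}$ equals $|V|=9$ when $\tau_0\in V$ and $0$ otherwise (and it is likewise $0$ for any $\tau_0\notin A[3]$, since then $\Lambda+\tau_0$ is disjoint from $A[3]$). In either case the count is divisible by three, so $\#\{\Lambda\cap(\Lambda+\tau_0)\}\equiv 0\pmod 3$ for all $\tau_0$; in particular it is constant modulo three, as claimed.

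The whole weight of the argument rests on Theorem~\ref{theo:planes}: once $\Lambda$ is known to be an affine subspace there is no genuine obstacle, the remaining step being purely combinatorial. I would emphasize that this affine rigidity is essential, since an arbitrary nine-element subset of $A[3]$ would produce self-correlation counts $\#\{\Lambda\cap(\Lambda+\tau_0)\}$ realizing several residues modulo three; the content of the proposition is precisely that the subspace structure collapses all of them to zero.
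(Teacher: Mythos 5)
Your argument has a structural problem: it derives the proposition from Theorem~\ref{theo:planes}, but in the paper this proposition is a \emph{step toward} that theorem, not a consequence of it. At the point where the proposition appears, Theorem~\ref{theo:planes} has only been \emph{stated}; it is established only afterwards, by combining a strengthened version of exactly this kind of intersection statement (constancy of $\#\{\Lambda \cap (\widetilde{\Lambda}+\tau_0)\} \pmod 3$ for translates of arbitrary non-isotropic planes $\widetilde{\Lambda}$) with the finite-geometry (Radon transform) proposition at the end of Section~\ref{sect:monodromy}. Indeed, the remark immediately following the proposition says explicitly that this self-correlation condition ``is insufficient to establish Theorem~\ref{theo:planes}'' --- which only makes sense if the intended flow of implication runs from intersection congruences to the theorem, not the other way around. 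Your proof is therefore backwards: formally one can avoid a literal cycle (the theorem's proof cites the strengthened proposition rather than this one), but your argument supplies no independent content and could not stand in the paper at the point where this proposition is needed.

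The missing idea is the intersection-theoretic one. The paper proves the proposition by noting that translation by $\tau_0 \in A[3] \subset G_3$ acts on $X$ (Propositions~\ref{prop:groupaction} and~\ref{prop:monobig}) and carries $P$ to another Lagrangian plane $P_2$ whose associated set is $\Lambda + \tau_0$; since $[P]$ and $[P_2]$ are integral classes with the same $\mu$-component, both $P^2$ and $P \cdot P_2$ are integers, so
$$\left( \sum_{\tau \in \Lambda} Z'_{\tau} \right)^2 \equiv
\left( \sum_{\tau\in \Lambda} Z'_{\tau} \right)
\left( \sum_{\tau\in(\Lambda+\tau_0)} Z'_{\tau} \right)  \pmod{\bZ}.$$
Rewriting this congruence in the renormalized form $\Psi$ (values mod $9\cdot 28$) and using $|\Lambda| = 9$ yields $\#\{\Lambda \cap (\Lambda+\tau_0)\} \equiv 0 \pmod 3$. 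This integrality argument is the prototype of the strengthened analysis that ultimately proves Theorem~\ref{theo:planes}; an acceptable proof of the proposition must contain it (or something equally independent of the theorem), rather than quoting the theorem itself.
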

\begin{proof}
We have
$$\left( \sum_{\tau \in \Lambda} Z'_{\tau} \right)^2 \equiv
\left( \sum_{\tau\in \Lambda} Z'_{\tau} \right) 
\left( \sum_{\tau\in(\Lambda+\tau_0)} Z'_{\tau} \right)  \pmod{\bZ},$$
because if $P_2$ is the corresponding plane then
$$P^2 - P\cdot P_2 \in \bZ,$$
which translates into the assertion of the lemma.  

The corresponding vectors have pairing (with respect to $\Psi$)
divisible by $9 \cdot 28$
$$ (\sum_{\tau \in \Lambda} Z'_{\tau}) \cdot_{\Psi} (\sum_{\tau \in (\Lambda + \tau_0)}Z'_{\tau}) \equiv 0 \pmod{9\cdot 28}.$$
Modulo $9\cdot 28$ we find 
$$83 \#\{\tau \in \Lambda \cap (\Lambda+\tau_0)\}-\# \{(\tau,\widetilde{\tau}): \tau\neq \widetilde{\tau},
\tau \in \Lambda, \widetilde{\tau} \in (\Lambda + \tau_0) \}
\equiv 0 $$
and 
$$84 \#\{\tau \in \Lambda \cap (\Lambda + \tau_0)\}-\# \{(\tau,\widetilde{\tau}): 
\tau \in \Lambda, \widetilde{\tau} \in (\Lambda+\tau_0) \}
\equiv 0.$$
Since $|\Lambda|=|\Lambda+\tau_0|=9$,
we have
$$84 \# \{ \tau \in \Lambda \cap (\Lambda+\tau_0) \}
\equiv 81
\pmod{9\cdot 28}$$
and 
$$ \# \{ \tau \in \Lambda \cap (\Lambda+\tau_0) \}
\equiv 0
\pmod{3}.$$
\end{proof}
\begin{rema}
This is insufficient to characterize translates of two-dimensional subspaces
in $A[3]\simeq (\bZ/3\bZ)^4$.  For instance, consider the set
$$\{e_2,e_2+e_1,e_2-e_1,e_3,e_3+e_1, e_3-e_1, e_4, e_4+e_1,e_4-e_1 \}.$$
Every element of the orbit of this set under $G_3$ meets the set in 
$0$ or $3$ points.  
Thus our intersection condition is insufficient to establish 
Theorem~\ref{theo:planes}.
\end{rema}

We strengthen the analysis above.  Suppose $P$ and $\widetilde{P}$ are planes of the form
$$[P]=\alpha \mu + \sum_{\tau} \beta_{\tau} Z'_{\tau}, \quad
[\widetilde{P}]=\alpha \mu + \sum_{\tau} \widetilde{\beta}_{\tau} Z'_{\tau},$$
so the difference is an {\em integral} class
\begin{equation} \label{eqn:difference}
[P]-[\widetilde{P}]=\sum_{\tau} (\beta_{\tau}-\widetilde{\beta}_{\tau})Z'_{\tau}.
\end{equation}
The nonzero coefficients are all $\pm 1$.  
Applying the symplectic group $\mathrm{Sp}(A[3])$, we get additional integral classes.
In particular, the classes
$$\sum_{\tau \in \widetilde{\Lambda}} Z'_{\tau}-Z'_{\tau+\tau_0}$$
are integral for each non-isotropic $\widetilde{\Lambda}\subset A[3]$ and $\tau_0 \in A[3]$.  
\begin{prop}
Let $\widetilde{\Lambda}$ denote a non-isotropic subspace and $\Lambda$ a set of 
points arising from a Lagrangian plane.  Then
$$\# \{\Lambda \cap (\widetilde{\Lambda}+\tau_0)\} \pmod{3}$$
is constant as $\tau_0$ varies.  
\end{prop}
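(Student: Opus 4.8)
The plan is to pair the integral class $[P]$ against the integral classes
$$D_{\tau_0}:=\sum_{\tau\in\widetilde{\Lambda}}\left(Z'_{\tau}-Z'_{\tau+\tau_0}\right)$$
produced in the paragraph preceding the statement, and to extract the congruence from the integrality of the resulting cup product. First I would record why $D_{\tau_0}$ is integral: since $\widetilde{\Lambda}$ is non-isotropic it is the projection of a Lagrangian plane $\widetilde{P}$, and translation by $\tau_0\in A[3]$ is an automorphism acting trivially on $\rH^2$ (Proposition~\ref{prop:groupaction}, via the monodromy of Proposition~\ref{prop:monobig}), hence fixing $\mu$ while sending $Z'_{\tau}\mapsto Z'_{\tau+\tau_0}$. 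Thus the $\tau_0$-translate of $\widetilde{P}$ is again a plane whose class shares the same $\mu$-component, and $D_{\tau_0}=[\widetilde{P}]-[\widetilde{P}+\tau_0]$ is integral, as already asserted in the preceding paragraph. As $[P]$ is also integral, the intersection number $[P]\cdot D_{\tau_0}$ lies in $\bZ$.

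The computation is then pure bookkeeping in the lattice $\Pi'$. Expanding $[P]=\alpha\mu+\sum_{\sigma\in\Lambda}Z'_{\sigma}$ as in \eqref{eqn:Ansatz3} and using that $\mu$ is orthogonal to every $Z'_{\tau}$, the $\mu$-term annihilates $D_{\tau_0}$ and only the pairings among the $Z'$ survive. Writing $k=\#\{\Lambda\cap\widetilde{\Lambda}\}$ and $k'=\#\{\Lambda\cap(\widetilde{\Lambda}+\tau_0)\}$ and substituting the intersection numbers \eqref{eqn:Zetatau} — diagonal pairings $83/(28\cdot 9)$, off-diagonal pairings $-1/(28\cdot 9)$ — each double sum over $\Lambda\times\widetilde{\Lambda}$ (resp.\ $\Lambda\times(\widetilde{\Lambda}+\tau_0)$) collapses, since $|\Lambda|=|\widetilde{\Lambda}|=9$, to $(84k-81)/(28\cdot 9)$ (resp.\ $(84k'-81)/(28\cdot 9)$). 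Subtracting, the constant terms cancel and I obtain
$$[P]\cdot D_{\tau_0}=\frac{84}{28\cdot 9}(k-k')=\frac{1}{3}(k-k').$$
Since the left-hand side is an integer, $k\equiv k'\pmod 3$, which is exactly the assertion that $\#\{\Lambda\cap(\widetilde{\Lambda}+\tau_0)\}\pmod 3$ does not depend on $\tau_0$.

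I do not expect a serious obstacle here: the geometric input is already in place, so the argument reduces to linear algebra over $\Pi'$. The one point deserving care is the integrality of $D_{\tau_0}$, which hinges on the translate of $\widetilde{P}$ being a genuine Lagrangian plane sharing the same $\mu$-coefficient, and this is precisely what Proposition~\ref{prop:monobig} and the preceding paragraph supply. I also note that the $c_2(X)$-contributions in $Z'_{\tau}=\frac{1}{3}(\frac{c_2(X)}{28}-Z_{\tau})$ cancel telescopically in $D_{\tau_0}$, so the comparison takes place entirely inside $\langle Z_{\tau}\rangle$; this explains why the argument strengthens the previous proposition by allowing the non-isotropic subspace $\widetilde{\Lambda}$ to differ from $\Lambda$ itself.
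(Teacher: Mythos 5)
Your proposal is correct and follows essentially the same route as the paper: both pair the integral plane class $[P]$ (equivalently, its projection $\sum_{\tau\in\Lambda}Z'_{\tau}$, using $\mu\perp Z'_{\tau}$) against the integral difference class $\sum_{\tau\in\widetilde{\Lambda}}\bigl(Z'_{\tau}-Z'_{\tau+\tau_0}\bigr)$ supplied by the preceding paragraph, and extract the congruence from integrality of the cup product. The only cosmetic difference is that you compute the pairing directly as $\tfrac{1}{3}(k-k')$, whereas the paper phrases the same count as congruences modulo $9\cdot 28$ for the renormalized form $\Psi$; the content is identical.
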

\begin{proof}
Consider the classes $\sum_{\tau \in \Lambda} Z'_{\tau}$, $\sum_{\tau  \in \widetilde{\Lambda}} Z'_{\tau}$,
and $\sum_{\tau \in \Lambda+\tau_0} Z'_{\tau}$.  
We have 
$$\left( \sum_{\tau \in \Lambda} Z'_{\tau} \right)\cdot \left( \sum_{\tau \in \widetilde{\Lambda}+\tau_0} Z'_{\tau}
-\sum_{\tau \in \widetilde{\Lambda}} Z'_{\tau}\right) \in \bZ$$
which translates into 
$$\left( \sum_{\tau \in \Lambda} Z'_{\tau} \right)\cdot_{\Psi} \left( \sum_{\tau \in \widetilde{\Lambda}+\tau_0} Z'_{\tau}
-\sum_{\tau \in \widetilde{\Lambda}} Z'_{\tau}\right) \equiv 0 \pmod{9\cdot 28}.$$
Thus we have
$$\begin{array}{l}
83 \#\{\tau \in \Lambda \cap (\widetilde{\Lambda}+\tau_0)\}-\# \{(\tau,\widetilde{\tau}): \tau\neq \widetilde{\tau},
\tau \in \Lambda, \widetilde{\tau} \in \widetilde{\Lambda}+\tau_0 \}  \\
\equiv
83 \#\{\tau \in \Lambda \cap \widetilde{\Lambda}\}-\# \{(\tau,\widetilde{\tau}): \tau\neq \widetilde{\tau},
\tau \in \Lambda, \widetilde{\tau} \in \widetilde{\Lambda} \} 
\pmod{9\cdot 28}
\end{array}
$$
and
$$\begin{array}{l}
84 \#\{\tau \in \Lambda \cap (\widetilde{\Lambda}+\tau_0)\}-\# \{(\tau,\widetilde{\tau}): 
\tau \in \Lambda, \widetilde{\tau} \in (\widetilde{\Lambda}+\tau_0) \} \\
\equiv 84 \#\{\tau \in \Lambda \cap \widetilde{\Lambda}\}-\# \{(\tau,\widetilde{\tau}): 
\tau \in \Lambda, \widetilde{\tau} \in \widetilde{\Lambda} \}
\pmod{9\cdot 28}.
\end{array}
$$
Since $|\Lambda|=|\widetilde{\Lambda}|=|\widetilde{\Lambda}+\tau_0|=9$, 
we obtain
$$
\#\{\tau \in \Lambda \cap (\widetilde{\Lambda}+\tau_0)\}
\equiv 
\#\{\tau \in \Lambda \cap \widetilde{\Lambda}\}
 \pmod{3},
$$
which is what we sought to prove.
\end{proof}

We shall need the following result on finite geometries, which should
be understood in the context of Radon transforms over finite fields \cite{Zele73}:
\begin{prop}
Let $V$ be a four-dimensional vector space over a finite field
with $q$ elements, with $q$ odd.  Suppose that $V$ admits a 
symplectic form.  Suppose that $\Lambda \subset V$ is a subset with
$q^2$ elements such that, for each affine non-isotropic
plane $\widetilde{\Lambda} \subset V$,
the function
$$\begin{array}{rcl}
V & \ra & \bZ/q\bZ \\
\tau & \mapsto & \# \{ \Lambda \cap (\widetilde{\Lambda} + \tau) \} \pmod{q}
\end{array}
$$
is constant.  Then $\Lambda$ is an affine plane in $V$.
\end{prop}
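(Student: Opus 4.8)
The plan is to unwind the hypothesis into a coset-counting condition, reduce the desired conclusion to a statement about the Fourier support of $\mathbf{1}_\Lambda$, and close with the sharp uncertainty principle. First I would fix a non-isotropic $2$-dimensional linear subspace $U\subseteq V$ (the direction of the window $\widetilde\Lambda$); as $\tau$ varies, $\widetilde\Lambda+\tau$ runs through all cosets of $U$, so the hypothesis says exactly that $\#(\Lambda\cap H)$ is independent of the coset $H\in V/U$ modulo $q$, for every non-isotropic $U$. After translating $\Lambda$ so that $0\in\Lambda$, the goal becomes to show that $\Lambda-\Lambda$ is a $2$-dimensional $\bF_q$-subspace, since then $\Lambda=(\Lambda-\Lambda)$ is forced by $|\Lambda|=q^2$.

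The engine of the argument is a dichotomy. Because the $q^2$ coset-counts are nonnegative integers summing to $q^2$, constancy modulo $q$ forces, for each non-isotropic $U$, exactly one of two alternatives. If $U\cap(\Lambda-\Lambda)=\{0\}$ then no two points of $\Lambda$ share a coset, so each coset meets $\Lambda$ in exactly one point (call $U$ \emph{transversal}); here the coset-counting function is identically $1$, so its \emph{exact} Fourier transform is supported only at the trivial character. If instead $U$ meets $\Lambda-\Lambda$ nontrivially, some coset-count is $\ge 2$, so the common residue cannot be a unit (a nonzero residue would force every count to be $\ge 1$, hence all equal to $1$), and every coset-count is divisible by $q$. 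Identifying $\widehat V\cong V$ via the symplectic form, transversality of $U$ yields that $\widehat{\mathbf{1}}_\Lambda$ vanishes on $U^\perp\setminus\{0\}$. Since a non-isotropic plane $U$ satisfies $\xi\in U^\perp$ iff $U\subseteq\xi^\perp$, and the non-isotropic planes in the degenerate hyperplane $\xi^\perp$ are precisely those not containing $\xi$, I conclude: for every nonzero $\xi$ in the spectrum $S=\mathrm{supp}(\widehat{\mathbf{1}}_\Lambda)$, every $2$-plane in $\xi^\perp$ avoiding $\xi$ must meet $\Lambda-\Lambda$.

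The finish I have in mind uses the Donoho--Stark uncertainty principle $|S|\,|\Lambda|\ge |V|$, giving $|S|\ge q^2$, together with its equality case, whose extremizers are indicators of cosets of subgroups (up to modulation and scaling); for a genuine $\{0,1\}$-indicator this forces $\Lambda$ to be a coset of a subgroup of order $q^2$. Thus it suffices to prove $|S|\le q^2$ (equivalently, that the additive energy of $\Lambda$ is maximal). The transversal planes carve a great deal out of $S$ through the exact vanishing above, and the hope is that these constraints, combined over all nonzero spectral directions $\xi$, collapse $S$ onto a single $2$-dimensional subspace.

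The hard part — and the main obstacle — is precisely closing the gap between the two halves of the dichotomy: the non-isotropic planes through an actual difference are all of the \emph{divisible} type and supply only congruence information, so they do not directly cut out the spectrum. Here I expect to exploit symplectic homogeneity (Witt's theorem makes $\mathrm{Sp}(V)$ transitive on nonzero vectors, hence on the relevant families of planes) to show that the blocking condition on $\Lambda-\Lambda$ inside each $\xi^\perp$ cannot hold for all spectral $\xi$ unless $\Lambda-\Lambda$ already contains a full line in each such direction, which is the mechanism that should force $\Lambda-\Lambda$ to be a subspace and $|S|$ down to $q^2$. A secondary obstacle is that when $q=p^k$ is a proper prime power the uncertainty equality yields only a subgroup of order $q^2$, i.e. a $2k$-dimensional $\bF_p$-subspace; I would upgrade this to an $\bF_q$-plane at the end by noting that the transversal/divisible dichotomy lives over the $\bF_q$-geometry, so the resulting subgroup must be stable under $\bF_q$-scaling.
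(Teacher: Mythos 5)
Your reduction and your dichotomy are correct: after fixing a non-isotropic linear direction $U$, the hypothesis plus the count $\sum_{H \in V/U} \#(\Lambda \cap H) = q^2$ does force either all coset counts equal to $1$ (transversal case) or all counts divisible by $q$ (divisible case). But everything after that point is conditional, by your own account: you never prove the bound $|S| \le q^2$ on the Fourier support, and that bound carries the entire weight of the argument ("the hope is that these constraints collapse $S$", "I expect to exploit symplectic homogeneity" are not proofs). Worse, the exact-vanishing constraints you extract may be vacuous: nothing you establish rules out that $\Lambda - \Lambda$ meets \emph{every} non-isotropic plane nontrivially --- a set of $q^2$ points has up to $\sim q^4$ differences, easily enough to hit every direction in $\bP^3(\bF_q)$ --- in which case there are no transversal $U$ at all, $\mathrm{supp}(\widehat{\mathbf{1}}_\Lambda)$ is completely unconstrained, and the Donoho--Stark machinery has nothing to act on. The divisible planes, which in that scenario are all of them, supply only the congruence information you concede you cannot convert into spectral constraints. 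The $\bF_p$-versus-$\bF_q$ defect in the uncertainty equality case is a second, independent hole, since your proposed repair of it also rests on the unproven main step.

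What is missing is a mechanism that turns a single failure of convexity into a violation of the hypothesis, and the paper's own proof shows how cheaply this can be done, entirely within the combinatorics you already set up. Suppose some affine line $\fl(\tau_1,\tau_2)$ through two points of $\Lambda$ is not contained in $\Lambda$ (normalize $\tau_1 = 0$). The non-isotropic planes containing this line form the complement of a hyperplane section in a Schubert $\bP^2 \subset \mathrm{Gr}(2,V)$, hence an affine plane $\bA^2$ with exactly $q^2$ members, pairwise disjoint away from the line; since $\Lambda$ has at most $q^2 - 2$ points off the line, pigeonhole produces one such plane $\widetilde\Lambda$ with $\Lambda \cap \widetilde\Lambda = \Lambda \cap \fl(\tau_1,\tau_2)$, so $m := \#(\Lambda \cap \widetilde\Lambda)$ lies in $[2, q-1]$. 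This single plane already contradicts your dichotomy (its count is neither $1$ nor $\equiv 0 \pmod q$); equivalently, constancy mod $q$ forces each of the $q^2$ disjoint translates of $\widetilde\Lambda$ to carry at least $m \ge 2$ points, giving $|\Lambda| \ge mq^2 > q^2$. So $\Lambda$ is closed under affine lines through its points, hence an affine subspace, and cardinality makes it a plane. Your framework could absorb this argument verbatim --- it is exactly the statement that intersection counts in $[2,q-1]$ are forbidden --- but as written your proposal contains neither it nor any substitute, so it does not constitute a proof.
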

\begin{proof}
It suffices to show that $\Lambda$ is `convex', in the sense that for any
pair of distinct $\tau_1, \tau_2 \in \Lambda$, the affine line $\fl(\tau_1,\tau_2)$
is contained in $\Lambda$.  Suppose this is not the case, so in particular
$$\#\{ \fl(\tau_1,\tau_2) \cap \Lambda \} < q.$$

For simplicity, assume that $\tau_1=0$ so that every affine plane containing
$\fl(\tau_1,\tau_2)$ is a subspace.  
Let $\mathrm{Gr}(2,V)$ denote the Grassmannian (a smooth quadric hypersurface
in $\bP^5$), $\mathrm{IGr}(2,V)$ the isotropic Grassmannian (a smooth hyperplane
section of $\mathrm{Gr}(2,V)$), and $\Sigma \subset \mathrm{Gr}(2,V)$
the Schubert variety of planes containing $\fl(\tau_1,\tau_2)$ (which is 
isomorphic to $\bP^2$).  Note that $\Sigma \not \subset \mathrm{IGr}(2,V)$,
since the latter is a smooth quadric threefold.  Thus
$$\Sigma_{\circ}:=\Sigma \cap (\mathrm{Gr}(2,V) \setminus \mathrm{IGr}(2,V))
 \simeq \bA^2$$
which has $q^2$ points over our finite field.  

The planes parametrized by $\Sigma_{\circ}$ are disjoint away from $\fl(\tau_1,\tau_2)$.
Hence the pigeon-hole (Dirichlet) principle guarantees there exists at least 
one such plane $\widetilde{\Lambda}$ that contains no points of 
$\Lambda$ outside $\fl(\tau_1,\tau_2)$.  
Otherwise, $\Lambda$ would have more than $q^2$ points.  
Consequently, 
$$m:=\# \{ \Lambda \cap \widetilde{\Lambda} \} = \# \{\Lambda \cap  \fl(\tau_1,\tau_2) \}$$
is between $2$ and $q-1$.  

Consider the affine translates $\widetilde{\Lambda}+\tau$, for $\tau$ taken from
a set of coset representatives of $V/\widetilde{\Lambda}$.  These are $q^2$
disjoint affine planes, each with at least $m$ points of $\Lambda$.  Thus
$\Lambda$ has cardinality at least $mq^2>q^2$, a contradiction.
\end{proof}

\section{Further remarks on saturation}
\begin{prob}
Characterize the saturation of $\Pi$, i.e., the intersection
$$\Pi^{sat}=(\Pi\otimes \bQ) \cap \rH^4(X,\bZ).$$
\end{prob}

\begin{exam}
Let $S=K_1(A)$ be a Kummer surface, $Z_{\tau}$ the $(-2)$-classes
associated with $A[2]$, and $\Pi=\left<Z_{\tau}:\tau \in A[2]\right>$
which has discriminant $2^{16}$.  
The saturation of $\Pi$ in $\rH^2(S,\bZ)$ is computed in \cite[\S 3]{LP};  
an element 
$$\frac{1}{2} \sum_{\tau} \epsilon(\tau) Z_{\tau}, \quad \epsilon(\tau)=0,1 \in \rH^2(S,\bZ)$$
if and only if $\epsilon:A[2] \ra \bZ/2\bZ$ is affine linear.  In particular,
the saturation has discriminant $2^{16-2\cdot 5}=2^6$, since the affine linear functions
have dimension five over $\bZ/2\bZ$.  

We exhibit generators admitting geometric interpretations.  For each $\Lambda \subset A[2]$
non-isotropic of dimension two, the class
\begin{equation} \label{eqn:Kummersurface}
\frac{1}{2}\left( \sum_{\tau \in \Lambda} Z_{\tau}- \sum_{\tau \in (\Lambda+\tau_0)}Z_{\tau}\right)
\end{equation}
is integral for geometric reasons.  Analogously to the example in Section~\ref{sect:key}, we can consider
$A=E_1 \times E_2$ and $\bP^1$'s corresponding to degree-two linear series on the factors.  Taking
difference of one such $\bP^1$ and its translate, we obtain classes of the form (\ref{eqn:Kummersurface}).  

We claim that the extension of $\Pi$ corresponding to these geometric classes agrees with the
extension associated with affine linear forms.  Suppose that $\Lambda$ is non-isotropic through the origin,
defined by $x=y=0$;  over $\bZ/2\bZ$ it can be defined as $x^2+xy+y^2=0$.  Taking the difference of this quadric
and a translate, we find
$$x^2+xy+y^2-(x+a)^2-(x+a)(y+b)-(y+b^2)=-xb-ya-a^2-ab-b^2,$$
which span {\em all} affine linear forms involving $x$ and $y$.  Varying over all such $\Lambda$,
we get the space of all affine linear forms.   
\end{exam}

The same construction is applicable to $K_2(A)$ as well:  The additional classes  (\ref{eqn:difference})
give an extension of $\Pi=\left<Z_{\tau}\right>$ by a subgroup isomorphic to the $\bZ/3\bZ$-vector
space of affine linear forms on $A[3]$.   The resulting lattice has discriminant $2^2 \cdot 3^{71} \cdot 7$.  
However, this is far from the full saturation, because the discriminant of the lattice
$$\Sym^2(\rH^2(X,\bZ)) \cap c_2(X)^{\perp}$$ 
is much smaller.  Indeed, using the formula
$$\begin{array}{rrr}
D_1 D_2 D_3 D_4 &=& 3\left(\left(D_1,D_2\right)\left(D_3,D_4\right) + \left(D_1,D_3\right)\left(D_2,D_4\right)\right. \\
		& &\left.+\left(D_1,D_4\right)\left(D_2,D_3\right)\right)
\end{array}
$$
for $D_1,D_2,D_3,D_4 \in \rH^2(A,\bZ) \subset \rH^2(K_2(A),\bZ)$, we can show that $\Sym^2(\rH^2(K_2(A)))$ has
discriminant $2^{14} \cdot 3^{38}$.  Taking the orthogonal complement to $c_2(X)$ can only increase the
exponent of $3$ by the power of $3$ appearing in $c_2(X)^2=756$.

\section{A negative result on isotropic subspaces}
Theorem~\ref{theo:planes} yields affine subspaces $\Lambda \subset A[3]$ but does specify whether they
are isotropic or non-isotropic (up to translation).  In Section~\ref{sect:key}, we exhibited examples
of non-isotropic subspaces.  Here we identify obstructions to the appearance of isotropic
subspaces:
\begin{prop}   Let $X$ and $\widetilde{X}$ denote manifolds deformation equivalent to $K_2(A)$.
Assume there exist Lagrangian planes $P\subset X$ and $\widetilde{P} \subset \widetilde{X}$ with
\begin{eqnarray*}
[P]&=&\frac{1}{216}\lambda^2 + \frac{1}{56}c_2(X)+ \sum_{\tau \in \Lambda}  Z'_{\tau},\\
\ [\widetilde{P}]&=&\frac{1}{216}{\widetilde{\lambda}}^2 + \frac{1}{56}c_2(X)+ \sum_{\tau \in \widetilde{\Lambda}}  Z'_{\tau},
\end{eqnarray*}
where $\widetilde{\Lambda} \subset A[3]$ is a two-dimensional non-isotropic affine subspace.
Assume $\lambda \in \rH^2(X,\bZ)$ and $\widetilde{\lambda} \in \rH^2(\widetilde{X},\bZ)$ are equivalent
under the monodromy action.  
Then $\Lambda$ is not a translate of an isotropic subspace.
\end{prop}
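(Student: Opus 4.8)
The plan is to use the hypothesis $\lambda\sim\widetilde\lambda$ to bring $[P]$ and $[\widetilde{P}]$ into a common integral lattice and then read off the isotropy type of $\Lambda$ from a single intersection number. First I would fix a monodromy isometry $g$ with $g\widetilde\lambda=\lambda$. Since $\mu$ is determined by $\lambda$, by $c_2(X)$, and by the Beauville--Bogomolov form, all of which are monodromy invariant, we have $g\widetilde\mu=\mu$, and $g$ preserves the lattice $\Pi$ of canonical classes and its dual $\Pi'$. Thus $g[\widetilde{P}]$ is an integral class in $\rH^4(X,\bZ)$ whose $\mu$-component agrees with that of $[P]$, namely $\alpha\mu$ with the universal constant $\alpha=1/1512$ of Section~\ref{sect:key}. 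Because the cup product of integral middle-degree classes is integral, $[P]\cdot g[\widetilde{P}]\in\bZ$. Using $\mu\perp\Pi$, the decomposition \eqref{eqn:Ansatz3}, the computation $\alpha^2\mu^2=9/28$, and the pairings \eqref{eqn:Zetatau}, the cross terms vanish and this product collapses to
$$[P]\cdot g[\widetilde{P}]=\tfrac{1}{3}\,\#\{\Lambda\cap g\widetilde\Lambda\},$$
so that $3\mid \#\{\Lambda\cap g\widetilde\Lambda\}$.

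Next I would promote this single congruence to a family. By the third bullet in Section~\ref{sect:monodromy}, the monodromy meeting the stabilizer of the base point contains $\mathrm{Sp}(A[3])$; composing $g$ with such elements lets $g\widetilde\Lambda$ range over the $\mathrm{Sp}(A[3])$-orbit of the non-isotropic plane $\widetilde\Lambda$, and the integrality $3\mid\#\{\Lambda\cap\widetilde\Lambda'\}$ persists for every member $\widetilde\Lambda'$ of this orbit. Writing $\Lambda=a+U$ and $\widetilde\Lambda'=b+\widetilde U'$ as cosets of two-dimensional subspaces of $V=\bF_3^4$, one checks that $\#\{\Lambda\cap\widetilde\Lambda'\}=1$ precisely when the directions are complementary, $U\oplus\widetilde U'=V$, and that $\#\{\Lambda\cap\widetilde\Lambda'\}\equiv 0\pmod 3$ in every other case (two affine planes sharing a one-dimensional direction meet in $0$ or $3$ points, parallel ones in $0$ or $9$). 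Hence the whole family of congruences is equivalent to the single geometric statement that \emph{no} non-isotropic plane in the achievable orbit is transverse to $U$.

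The final step is a finite-geometry dichotomy, which I would aim to establish in the spirit of the Radon-transform argument used above: within the $\mathrm{Sp}(A[3])$-orbit attached to $\lambda$, a Lagrangian direction $U$ admits a transverse non-isotropic complement, while a non-isotropic direction $U$ admits none. Granting this, an isotropic $\Lambda$ produces some achievable $\widetilde\Lambda'$ with $\#\{\Lambda\cap\widetilde\Lambda'\}=1$, contradicting $3\mid\#\{\Lambda\cap\widetilde\Lambda'\}$; therefore $U$ must be non-isotropic and $\Lambda$ is not a translate of an isotropic subspace. It is natural to organize this through the affine-linear-forms description of the extension of $\Pi=\langle Z_\tau\rangle$ discussed in the saturation remarks, since the integrality of the difference classes $\sum_{\tau\in\widetilde\Lambda}Z'_\tau-\sum_{\tau\in\widetilde\Lambda+\tau_0}Z'_\tau$ is exactly what makes the orbit of $\widetilde\Lambda$ act on $[P]$.

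The main obstacle is precisely this dichotomy. The delicate point is that the mere \emph{existence} of a non-isotropic complement to $U$ does not by itself separate the isotropic from the non-isotropic case, so one cannot argue from transversality alone; instead one must track which affine planes genuinely occur in the $\mathrm{Sp}(A[3])$-orbit and how they sit relative to the base point carried by $\lambda$. That such a refined control is available, and is forced, is suggested by a consistency check: since the non-isotropic planes of Section~\ref{sect:key} really do arise from Lagrangian planes, the orbit structure \emph{cannot} make every non-isotropic $U$ contradictory, so the symplectic geometry over $\bF_3$ must see the difference between Lagrangian and non-isotropic directions in exactly this way. Making this explicit is the crux of the proof.
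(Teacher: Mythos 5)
Your opening computation is correct, and it is in fact a cleaner rendering of the mechanism the paper itself uses: with $g\widetilde\lambda=\lambda$ one gets $g\widetilde\mu=\mu$, the cross terms vanish, $\alpha^2\mu^2=9/28$, and since two $9$-element sets overlapping in $k$ points give $(\sum_{\tau\in\Lambda}Z'_{\tau})\cdot(\sum_{\tau\in g\widetilde\Lambda}Z'_{\tau})=\frac{84k-81}{252}=\frac{k}{3}-\frac{9}{28}$, integrality of $[P]\cdot g[\widetilde P]$ yields $3\mid\#\{\Lambda\cap g\widetilde\Lambda\}$. (The paper instead pairs each plane with its own monodromy translates, $P\cdot\gamma P$ and $\widetilde P\cdot\gamma\widetilde P$, and matches the resulting congruence classes through the orbit of $\lambda$; your single cross-pairing is the same idea, executed more transparently.) The proposal breaks at the promotion step. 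The monodromy elements furnished by Proposition~\ref{prop:monobig} that realize $\mathrm{Sp}(A[3])$ come from the monodromy of the abelian surface, and these act nontrivially on $\rH^2$: as the paper itself observes, only $\pm 1$ acts trivially on $\bigwedge^2\rH^1(A)$. Hence for such a lift $\gamma$ one has $\gamma g\widetilde\lambda=\gamma\lambda\neq\lambda$ in general, the $\mu$-components of $[P]$ and $\gamma g[\widetilde P]$ no longer agree, and $[P]\cdot\gamma g[\widetilde P]=\alpha^2\mu\cdot\gamma\mu+\frac{k'}{3}-\frac{9}{28}$, where $\alpha^2\mu\cdot\gamma\mu-\frac{9}{28}=\bigl((\lambda,\gamma\lambda)^2-54^2\bigr)/7776$ depends on $(\lambda,\gamma\lambda)$ and need not lie in $\frac{1}{3}\bZ$; no congruence on $k'$ follows. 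Worse, the asserted persistence over the \emph{full} $\mathrm{Sp}(A[3])$-orbit cannot be true: two non-isotropic planes can be transverse (e.g.\ two complementary non-degenerate planes), so if $3\mid\#\{\Lambda\cap\widetilde\Lambda'\}$ held for every non-isotropic $\widetilde\Lambda'$ it would equally forbid non-isotropic $\Lambda$, contradicting the planes of Section~\ref{sect:key}. You notice exactly this tension in your closing paragraph, but that paragraph is then inconsistent with your step 2 as stated.

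What remains is the ``dichotomy,'' which you candidly label the crux and do not prove. Note that since two affine planes in $(\bZ/3\bZ)^4$ meet in $0$, $1$, $3$, or $9$ points, the pairing of the $Z'$-parts takes only two values modulo $\bZ$ (according to $k\equiv 0$ or $1 \pmod 3$); so the entire content of the proposition lies in determining which affine symplectic transformations of $A[3]$ are induced by monodromy operators compatible with fixing $\lambda$ --- precisely the input that is missing. Proposition~\ref{prop:groupaction} supplies only translations and $-1$ (which never change the direction of a plane, hence never create transversality), while the $\mathrm{Sp}(A[3])$-lifts of Proposition~\ref{prop:monobig} move $\lambda$. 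This is also the point where the paper's own argument is most terse: its dichotomy claim $\gamma\Lambda=\Lambda$ or $\gamma\Lambda\cap\Lambda=\{0\}$ for isotropic $\Lambda$, and the existence of $\gamma$ with one-dimensional overlap $\gamma\widetilde\Lambda\cap\widetilde\Lambda$ usable in the comparison, are asserted rather than derived (and your general formula shows the paper's displayed value $1-\frac{1}{28\cdot 9}$ for that overlap does not fit the pattern $\frac{k}{3}-\frac{9}{28}$). In sum: your arithmetic skeleton is right and matches the paper's, but the essential monodromy-theoretic step is absent, so the proposal as written is not a proof.
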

Recall that 
$\lambda \in \rH^2(X,\bZ)$ satisfies 
$$\left(\lambda,\lambda\right)=-54, \quad
\left(\lambda,\rH^2(X,\bZ)\right)=6\bZ.$$
\begin{proof}
Choose $\gamma$ to be an element of the monodromy group of $X$ 
acting on $A[3]$ via a 
nontrivial symplectic transformation.  Note that 
$$\gamma \Lambda =\Lambda \text{ or } \gamma \Lambda\cap \Lambda = \{0\}$$
which implies
$$(\sum_{\tau \in \Lambda} Z'_{\tau} )\cdot 
(\sum_{\tau \in \gamma \Lambda} Z'_{\tau} ) = \frac{75}{28} \text{ or }
						\frac{1}{84}.$$
Since $P\cdot \gamma(P) \in \bZ$ we find that
$$(\frac{1}{216}\lambda^2 + \frac{1}{56}c_2(X))\cdot 
\gamma(\frac{1}{216}\lambda^2 + \frac{1}{56}c_2(X)) \equiv \frac{9}{28}
					\text{ or } \frac{-1}{84} \pmod{\bZ}.$$

On the other hand, it is possible to produce $\gamma$ where 
$\gamma(\widetilde{\Lambda}) \cap \widetilde{\Lambda}$ is one-dimensional (see Proposition~\ref{prop:monobig}).  
Here, it is crucial that $\widetilde{\Lambda}$ be non-isotropic.  Then
we have
$$(\sum_{\tau \in \widetilde{\Lambda}} Z'_{\tau} )\cdot 
(\sum_{\tau \in \gamma  \widetilde{\Lambda}} Z'_{\tau} ) =  1-\frac{1}{28\cdot 9},$$
which combined with the fact that $\widetilde{P}\cdot \gamma(\widetilde{P}) \in \bZ$
yields 
$$(\frac{1}{216}{\widetilde{\lambda}}^2 + \frac{1}{56}c_2(X))\cdot 
\gamma(\frac{1}{216}{\widetilde{\lambda}}^2 + \frac{1}{56}c_2(X)) \equiv \frac{1}{28\cdot 9}
					\pmod{\bZ}.$$
Since $\lambda$ and $\widetilde{\lambda}$ are in the same orbit under the monodromy representation,
they share common intersection properties.  Thus this is incompatible with the first equation above.  
\end{proof}

\section{Divisibility properties and monodromy}
\label{sect:divisibility}
Let $X$ be deformation equivalent to $K_n(A)$.  
Note that
$$\rH^2(A,\bZ) \simeq U^{\oplus 3}=\left(\begin{matrix} 0 & 1 \\ 1 & 0 \end{matrix} \right)^{\oplus 3},$$
because this is a unimodular lattice of signature $(3,3)$.  
Thus the Beauville-Bogomolov form on $\rH^2(X,\bZ)$ (see (\ref{eqn:BBform})) has 
discriminant group
$d(X)=\Hom(\rH^2(X,\bZ),\bZ)/\rH^2(X,\bZ)\simeq \bZ/2(n+1)\bZ$.  

Let $\Gamma$ denote the subgroup of the orthogonal group of $\rH^2(X,\bZ)$
acting trivially on $d(X)$.  
Consider equivalence classes of primitive vectors, i.e.,
primitive $D,D' \in \rH^2(X,\bZ)$  are equivalent if
$\left(D,D\right)=\left(D',D'\right)$,
$$\left(D,\rH^2(X,\bZ)\right)=\left(D,\rH^2(X,\bZ)\right)=\left<d\right>,$$
and $\frac{1}{d}D=\frac{1}{d}D'$ in $d(X)$.
By \cite[\S 10]{Ei} (cf. \cite[Lemma 3.5]{GHS}) we have that $\Gamma$-orbits are
equal to these equivalence classes.

In light of Markman's results on the monodromy of Hilbert schemes of K3 surfaces \cite{Mark1,Mark2},
it is natural to ask the following:
\begin{prob} 
Does the monodromy representation
on $\rH^2(X,\bZ)$ act transitively on equivalence classes of primitive vectors?
\end{prob}
Assuming this, the class $\ell$ of a line on a Lagrangian plane would have to be in one of the
following equivalence classes:
$$\ell=E-3e^{\vee},3e^{\vee}.$$
We expect only the primitive class occurs because
\begin{itemize}
\item{we know of no examples where the minimal effective generator $\ell$ of an extremal ray
associated to a birational contraction of holomorphic symplectic manifolds fails to
be primitive;}
\item{we conjecture \cite{HT09} that classes of type $e^{\vee}$ arise from {\em divisorial} contractions, e.g.,
the contraction from $K_n(A)$ induced by the Hilbert-Chow morphism 
$$A^{[n+1]} \ra A^{(n+1)}.$$
However, these contractions are preserved under deformations that 
respect $e^{\vee}$ as a Hodge class.}
\end{itemize}

\bibliographystyle{plain}
\bibliography{hodge}

\begin{thebibliography}{10}

\bibitem{Boissiere}
S.~Boissi{\`e}re.
\newblock Chern classes of the tangent bundle on the {H}ilbert scheme of points
  on the affine plane.
\newblock {\em J. Algebraic Geom.}, 14(4):761--787, 2005.

\bibitem{BoissiereNW}
S.~Boissi{\`e}re and M.~Nieper-Wisskirchen.
\newblock Universal formulas for characteristic classes on the {H}ilbert
  schemes of points on surfaces.
\newblock {\em J. Algebra}, 315(2):924--953, 2007.

\bibitem{BNWS}
S.~Boissi{\'e}re, M.~Nieper-Wisskirchen, and A.~Sarti.
\newblock Higher dimensional {E}nriques varieties and automorphisms of
  generalized {K}ummer varieties, 2010.
\newblock arXiv:1001.4728v1.

\bibitem{Br}
J.~Brian{\c{c}}on.
\newblock Description de {$H{\rm ilb}^{n}C\{x,y\}$}.
\newblock {\em Invent. Math.}, 41(1):45--89, 1977.

\bibitem{Britze}
M.~Britze.
\newblock {\em On the Cohomology of Generalized {K}ummer Varieties}.
\newblock PhD thesis, Cologne, 2003.

\bibitem{BritzeNieper}
M.~Britze and M.~A. Nieper.
\newblock Hirzebruch-{R}iemann-{R}och formulae on irreducible symplectic
  {K}\"ahler manifolds.
\newblock arXiv:math.AG/0101062.

\bibitem{Ei}
M.~Eichler.
\newblock {\em Quadratische {F}ormen und orthogonale {G}ruppen}.
\newblock Springer-Verlag, Berlin, 1974.
\newblock Zweite Auflage, Die Grundlehren der mathematischen Wissenschaften,
  Band 63.

\bibitem{ES98}
G.~Ellingsrud and S.~A. Str{\o}mme.
\newblock An intersection number for the punctual {H}ilbert scheme of a
  surface.
\newblock {\em Trans. Amer. Math. Soc.}, 350(6):2547--2552, 1998.

\bibitem{Fu81}
A.~Fujiki.
\newblock A theorem on bimeromorphic maps of {K}\"ahler manifolds and its
  applications.
\newblock {\em Publ. Res. Inst. Math. Sci.}, 17(2):735--754, 1981.

\bibitem{Fulton}
W.~Fulton.
\newblock {\em Intersection theory}, volume~2 of {\em Ergebnisse der Mathematik
  und ihrer Grenzgebiete. 3. Folge. A Series of Modern Surveys in Mathematics}.
\newblock Springer-Verlag, Berlin, second edition, 1998.

\bibitem{Gott94}
L.~G{\"o}ttsche.
\newblock {\em Hilbert schemes of zero-dimensional subschemes of smooth
  varieties}, volume 1572 of {\em Lecture Notes in Mathematics}.
\newblock Springer-Verlag, Berlin, 1994.

\bibitem{GHS}
V.~Gritsenko, K.~Hulek, and G.~K. Sankaran.
\newblock Moduli spaces of irreducible symplectic manifolds.
\newblock to appear, available at arXiv:0802.2078.

\bibitem{HTGAFA99}
B.~Hassett and Y.~Tschinkel.
\newblock Rational curves on holomorphic symplectic fourfolds.
\newblock {\em Geom. Funct. Anal.}, 11(6):1201--1228, 2001.

\bibitem{HT09}
B.~Hassett and Y.~Tschinkel.
\newblock Intersection numbers of extremal rays on holomorphic symplectic
  varieties, 2009.
\newblock preprint, 24 pages.

\bibitem{HTGAFA08}
B.~Hassett and Y.~Tschinkel.
\newblock Moving and ample cones of holomorphic symplectic fourfolds.
\newblock {\em Geometric and Functional Analysis}, 19(4):1065--1080, 2009.

\bibitem{Hu99}
D.~Huybrechts.
\newblock Compact hyper-{K}\"ahler manifolds: basic results.
\newblock {\em Invent. Math.}, 135(1):63--113, 1999.

\bibitem{Hu03}
D.~Huybrechts.
\newblock The {K}\"ahler cone of a compact hyperk\"ahler manifold.
\newblock {\em Math. Ann.}, 326(3):499--513, 2003.

\bibitem{KV}
D.~Kaledin and M.~Verbistsky.
\newblock Partial resolutions of {H}ilbert type, {D}ynkin diagrams and
  generalized {K}ummer varieties, 1998.
\newblock arXiv:math/9812078v1.

\bibitem{Kur}
M.~Kuranishi.
\newblock On the locally complete families of complex analytic structures.
\newblock {\em Ann. of Math. (2)}, 75:536--577, 1962.

\bibitem{LL}
E.~Looijenga and V.~A. Lunts.
\newblock A {L}ie algebra attached to a projective variety.
\newblock {\em Invent. Math.}, 129(2):361--412, 1997.

\bibitem{LP}
E.~Looijenga and Chr. Peters.
\newblock Torelli theorems for {K}\"ahler {$K3$} surfaces.
\newblock {\em Compositio Math.}, 42(2):145--186, 1980/81.

\bibitem{Mark2}
E.~Markman.
\newblock On the monodromy of moduli spaces of sheaves on {$K3$} surfaces {II},
  2003.
\newblock arXiv:0305043v4.

\bibitem{Mark1}
E.~Markman.
\newblock On the monodromy of moduli spaces of sheaves on {$K3$} surfaces.
\newblock {\em J. Algebraic Geom.}, 17(1):29--99, 2008.

\bibitem{Mark3}
E.~Markman.
\newblock Prime exceptional divisors on holomorphic symplectic varieties and
  monodromy-reflections, 2009.
\newblock arXiv:0912.4981v1.

\bibitem{Nakajima}
H.~Nakajima.
\newblock Heisenberg algebra and {H}ilbert schemes of points on projective
  surfaces.
\newblock {\em Ann. of Math. (2)}, 145(2):379--388, 1997.

\bibitem{NWnote}
M.~Nieper-Wisskirchen.
\newblock Is $c_2$ a multiple of the {B}eauville-{B}ogomolov form?
\newblock available at www.math.uni-augsburg.de/alg/mitarbeiter/mnieper/c2.pdf.

\bibitem{Ran}
Z.~Ran.
\newblock Hodge theory and deformations of maps.
\newblock {\em Compositio Math.}, 97(3):309--328, 1995.

\bibitem{Salamon}
S.~M. Salamon.
\newblock On the cohomology of {K}\"ahler and hyper-{K}\"ahler manifolds.
\newblock {\em Topology}, 35(1):137--155, 1996.

\bibitem{SawonThesis}
J.~Sawon.
\newblock {\em Rozansky-{W}itten invariants of hyperk\"ahler manifolds}.
\newblock PhD thesis, Trinity College, Cambridge, 1999.
\newblock arXiv:math.AG/0404360.

\bibitem{Verb95}
M.~Verbitsky.
\newblock Tri-analytic subvarieties of hyper-{K}aehler manifolds.
\newblock {\em Geom. Funct. Anal.}, 5(1):92--104, 1995.

\bibitem{Verb96}
M.~Verbitsky.
\newblock Cohomology of compact hyper-{K}\"ahler manifolds and its
  applications.
\newblock {\em Geom. Funct. Anal.}, 6(4):601--611, 1996.

\bibitem{Verb98}
M.~Verbitsky.
\newblock Trianalytic subvarieties of the {H}ilbert scheme of points on a
  {$K3$} surface.
\newblock {\em Geom. Funct. Anal.}, 8(4):732--782, 1998.

\bibitem{Voisin}
Cl. Voisin.
\newblock Sur la stabilit\'e des sous-vari\'et\'es lagrangiennes des
  vari\'et\'es symplectiques holomorphes.
\newblock In {\em Complex projective geometry ({T}rieste, 1989/{B}ergen,
  1989)}, volume 179 of {\em London Math. Soc. Lecture Note Ser.}, pages
  294--303. Cambridge Univ. Press, Cambridge, 1992.

\bibitem{Yosh01}
K.~Yoshioka.
\newblock Moduli spaces of stable sheaves on abelian surfaces.
\newblock {\em Math. Ann.}, 321(4):817--884, 2001.

\bibitem{Zele73}
A.V. Zelevinski.
\newblock Generalized {R}adon transforms in spaces of functions on {G}rassmann
  manifolds over a finite field.
\newblock {\em Uspehi Mat. Nauk.}, 28(5 (173)):243--244, 1973.

\end{thebibliography}

\end{document}